\newtheorem{theorem}{Theorem}[section]
\newtheorem{cor}[theorem]{Corollary}
\newtheorem{lemma}[theorem]{Lemma}
\newtheorem{prop}[theorem]{Proposition}
\theoremstyle{definition}
\newtheorem{remark}{Remark}[section]
\newtheorem*{example}{Example}
\numberwithin{equation}{section}
\newcommand{\od}{\mathcal{O}(d)}
\newcommand{\odn}{\mathcal{O}(d)^{n}}
\newcommand{\sod}{\mathcal{SO}(d)}
\newcommand{\sodn}{\mathcal{SO}(d)^{n}}
\newcommand{\og}{\mathcal{G}}
\newcommand{\ogn}{\mathcal{G}^{n}}
\newcommand{\grad}{\operatorname{grad}}
\newcommand{\Hess}{\operatorname{Hess}}
\newcommand{\Exp}{\operatorname{Exp}}
\newcommand{\bQ}{\bm{Q}}
\newcommand{\bG}{\bm{G}}
\newcommand{\bC}{\bm{C}}
\begin{document}
\title{\bf\Large Rotation Group Synchronization via Quotient Manifold}
\author[1]{Linglingzhi Zhu\thanks{llzzhu@se.cuhk.edu.hk}}
\author[2]{Chong Li\thanks{cli@zju.edu.cn}}
\author[1]{Anthony Man-Cho So\thanks{manchoso@se.cuhk.edu.hk}}
\affil[1]{The Chinese University of Hong Kong}
\affil[2]{Zhejiang University}
\date{\today}

\maketitle

\begin{abstract}
Rotation group $\sod$ synchronization is an important inverse problem and has attracted intense attention from numerous application fields such as graph realization, computer vision, and robotics. In this paper, we focus on the least-squares estimator of rotation group synchronization with general additive noise models, which is a nonconvex optimization problem with manifold constraints. 
Unlike the phase/orthogonal group synchronization, there are limited provable approaches for solving rotation group synchronization. First, we derive improved estimation results of the least-squares/spectral estimator, illustrating the tightness and validating the existing relaxation methods of solving rotation group synchronization through the optimum of relaxed orthogonal group version under near-optimal noise level for exact recovery. 
Moreover, departing from the standard approach of utilizing the geometry of the ambient Euclidean space,
we adopt an intrinsic Riemannian approach to study orthogonal/rotation group synchronization. Benefiting from a quotient geometric view, we prove the positive definite condition of quotient Riemannian Hessian around the optimum of orthogonal group synchronization problem, and consequently the Riemannian local error bound property is established to analyze the convergence rate properties of various Riemannian algorithms.  
As a simple and feasible method, the sequential convergence guarantee of the (quotient) Riemannian gradient method for solving orthogonal/rotation group synchronization problem is studied, and we derive its global linear convergence rate to the optimum with the spectral initialization. 
All results are deterministic without any probabilistic model.
\end{abstract}

\bigskip
 \par
\textbf{Keywords:} Rotation group synchronization; Quotient manifold; Riemannian local error bound; Riemannian gradient method

\section{Introduction}


The synchronization problem refers to estimating a collection of elements by pairwise mutual measurements. For orthogonal (sub)group $\og\subseteq\od$ synchronization, the task is to derive the target group element $\bG^\star:=[\bG_1^\star;\ldots;\bG_n^\star]\in\ogn$ that we collectively refer to as the ground truth from the noisy observations:
\[
\bC= \bG^\star \bG^{\star\top}+\bm{\Delta},
\]
where $\bm{\Delta}\in\operatorname{Sym}(nd)$ is an symmetric perturbation matrix. 
In this paper, we focus on the synchronization problem on the rotation group (i.e., $\og=\sod$), which is an important case of orthogonal subgroup synchronization. 
The rotation group synchronization is intensively considered in various areas including sensor network localization \cite{cucuringu2012sensor}, structural biology \cite{cucuringu2012eigenvector}, computer vision (e.g., point set registration \cite{khoo2016non,bohorquez2020maximizing}, multiview structure from motion \cite{arie2012global}, rotation averaging \cite{hartley2013rotation,dellaert2020shonan}), cryo-electron microscopy \cite{singer2011three,singer2018mathematics}, and also simultaneous localization and mapping for robotics \cite{rosen2019se}.



It is a common approach to study the orthogonal (sub)group synchronization via the least-squares estimator (also as the maximum likelihood estimator given Gaussian noise setting as a statistical model), which is the following optimization problem: 
\begin{equation*}
\begin{array}{ll}
\min\limits_{\bG\in\mathbb{R}^{nd\times d}} &\sum\limits_{i<j}\Vert \bG_i \bG_j^\top -\bC_{ij} \Vert_F^2\\
\ \ \ {\rm s.t.} &\bG_i\in \og,\ i\in[n].
\end{array}
\end{equation*}
By the orthogonality of $\bG_i$ for $i\in[n]$, the above optimization problem can be reformulated as
\begin{equation}\label{Problem00}\tag{Sync}
\max\limits_{\bG\in\ogn} 
\operatorname{tr}(\bG^\top \bC \bG).
\end{equation}
When it specialized to the phase synchronization $\og=\mathcal{SO}(2)$ (as $\mathcal{SO}(2)$ is isomorphic to $\mathcal{U}(1)$) \cite{singer2011angular,boumal2016nonconvex,bandeira2017tightness,liu2017estimation,zhong2018near} and orthogonal group synchronization $\og=\od$ \cite{liu2020unified,won2022orthogonal,ling2022solving,ling2022improved,ling2022near,zhu2021orthogonal}, the estimation performance and convergence guarantee of various algorithms for solving problem \eqref{Problem00} has already been extensively investigated in the literature. Also, there are results considering the cyclic group $\mathcal{Z}_m$ and permutation group $\mathcal{P}(d)$ \cite{liu2020unified,ling2022near}. We briefly review the existing approaches in literature for solving the synchronization problem as follows.


The most common approaches for solving \eqref{Problem00} is by relaxation including: spectral relaxation, semidefinite relaxation (i.e., convex relaxation) and Burer-Monteiro factorization as a certain type of nonconvex relaxation. The spectral relaxation \cite{singer2011angular,romanov2020noise,ling2022near} is a convenient method with guarantees for orthogonal group synchronization by simply computing the top $d$ eigenvectors closely approximating the ground truth, which is proved in \cite{zhang2022exact} as an estimator achieving minimax lower bound with Gaussian noise for phase and orthogonal group synchronization. Hence, it is usually used to be the initialization of other methods for deriving the least-squares estimator.
On the other hand, as a quadratic program with quadratic constraints (QPQC), the semidefinite programming relaxation (SDR) technique \cite{luo2010semidefinite,singer2011angular} of problem \eqref{Problem00} is a natural idea for possibly employed. With the help of the generative model, the SDR has been proven to be tight if the noise strength is relatively small for phase synchronization \cite{bandeira2017tightness} and orthogonal group $\od$ synchronization \cite{ling2022solving,won2022orthogonal}, and the noise level is improved to be near-optimal by the leave-one-out technique under Gaussian noise \cite{zhong2018near,ling2022improved}. Also, with the measurements corrupted by orthogonal matrices, the near-optimal performance guarantee for rotation group $\sod$ synchronization is derived \cite{wang2013exact}. However, a major drawback of the SDR is that it fails to scale well and is computationally expensive. Instead, the Burer-Monteiro factorization \cite{burer2003nonlinear,burer2005local}, as a nonconvex relaxation approach related to low-rank matrix optimization, offers better scalability compared to the SDR and has been proved that its first and second-order necessary optimality conditions are sufficient to guarantee global optimality \cite{ling2022solving} provided that the noise is sufficiently small for orthogonal group $\od$ synchronization. Taking advantage of the low-rank solution, it is more practical to use fast low-rank nonconvex optimization approach for solving the synchronization problem including Riemannian optimization \cite{dellaert2020shonan,mei2017solving,rosen2019se}.

In addition to various relaxation approaches, the generalized power method (GPM) \cite{journee2010generalized} as a projected gradient method to the product manifold $\ogn$ in the ambient Euclidean space resolves the nonconvex problem \eqref{Problem00} 
directly. 
For the phase synchronization problem, \cite{boumal2016nonconvex} proves that 
the GPM with spectral initialization converges to a global optimum of problem \eqref{Problem00}. Moreover, the linear convergence rate and improved noise bounds are derived in \cite{liu2017estimation,zhong2018near}. Later on, similar results are extended to orthogonal group $\od$ synchronization \cite{zhu2021orthogonal,ling2022improved}. 
Recently, there are also alternative models with algorithms utilizing the message-passing procedure \cite{perry2018message,lerman2022robust} to solve group synchronization. The summary and comprehensive comparisons of above-mentioned works have been discussed in the literature, 
and we refer the reader to \cite{liu2020unified,ling2022improved,zhu2021orthogonal} for details. 

From the above existing approaches it can be observed that
there are seldom results for rotation group $\sod$ synchronization besides the estimation performance about the GPM studied in \cite{liu2020unified}, iterative polar decomposition algorithm in \cite{gao2023optimal} achieving the minimax optimal under Gaussian noise, and the convergence of Riemannian gradient method for the phase synchronization problem \cite{chen2019first} which highly depends on the commutative nature of $\mathcal{SO}(2)$. 


In this paper, we study the rotation group synchronization problem
\begin{equation}\label{ProblemR}\tag{Sync-R}
\max\limits_{\bG\in\sodn} 
\operatorname{tr}(\bG^\top \bC \bG).
\end{equation}
Rather than directly solving problem \eqref{ProblemR}, we first focus on the properties of the relaxed orthogonal group $\od$ synchronization problem
\begin{equation}\label{Problem0}\tag{Sync-O}
\max\limits_{\bG\in\odn} \bar{f}(\bG):=\operatorname{tr}(\bG^\top \bC \bG),\quad \text{where}\ \bar{f}: \odn \rightarrow \mathbb{R}.\\
\end{equation}
Under near-optimal noise level of dependence on $n$ for exact
recovery, we prove the relaxation is tight, i.e., for the given ground truth $\bG^\star\in\sodn$, all the optimal solutions $\widehat{\bG}$ of problem \eqref{Problem0} satisfy $\widehat{\bG}\in\sodn$.
Then, we investigate the manifold constrained optimization problem \eqref{Problem0} from the intrinsic Riemannian perspective, which turns a constrained problem to a unconstrained one on the manifold and enjoys the advantages of Riemannian approach for dimension reduction. 
The additional cost about geometry information e.g., tangent space
and geodesic calculus with the exponential map can be compensated
thanks to the properties of given product orthogonal matrix 
$\odn$
based on Lie theory. 
On the other hand, the Riemannian algorithms will automatically keep the iterates on a single connected component of the orthogonal group and are naturally fit for the case of the rotation group $\sod$ synchronization by always providing a feasible solution. 

The central idea of this paper is to utilize the natural quotient structure of the problem \eqref{Problem0}. 
Since the function value $\bar{f}$ at $\bG\in \odn$ is invariant through any orthogonal group operation $\bQ\in\od$, it leads to the following equivalent optimization problem on the quotient manifold $\mathcal{Q}:=\odn/\od$  as follows:
\begin{equation}\label{Problem}\tag{Sync-Q}
  \max\limits_{[\bG]\in\mathcal{Q}}\ f([\bG]):=\operatorname{tr}(\bG^\top \bC \bG),\quad \text{where}\ f: \mathcal{Q}\rightarrow \mathbb{R},
  \end{equation}
and $[\bG]:=\left\{\bG' \in \odn \mid \bG'=\bG \bQ,\ \bQ \in \od\right\}$ is the equivalent class containing $\bG$. 
The main contribution is summerized as follows:
\begin{itemize}
\item The quotient geometry of the manifold $\mathcal{Q}$ is investigated including the horizontal space, quotient Riemannian metric, geodesic with exponential map on quotient manifold, and also the characterizations for optimality conditions with explicit forms of quotient Riemannian gradient and Hessian are provided, which is a crucial step for landscape analysis and algorithms design of the synchronization problem. The local positive definite condition of quotient Riemannian Hessian of problem \eqref{Problem} around the optimal solution 
are given from the intrinsic quotient Riemannian perspective, which implies the Riemannian local error bound property of the original problem \eqref{Problem0}. Note that this result is different from the one derived in \cite{zhu2021orthogonal} related to the ambient Euclidean space, and it can be applied to analyze the convergence behavior of various Riemannian algorithms.
\item Improved estimation results of the least-squares and spectral estimator are derived under the general deterministic additive noise model. As a consequence, 
it is tight to solve the relaxed problem \eqref{Problem0} for problem \eqref{ProblemR} under near-optimal noise level of dependence on $n$ for exact recovery, which makes all previous about 
the spectral method \cite{ling2022near}, the SDR \cite{won2022orthogonal,ling2022solving}, the Burer-Monteiro factorization \cite{boumal2016non,ling2022solving} and the GPM \cite{liu2020unified,zhu2021orthogonal,ling2022improved} for orthogonal group $\od$ synchronization can be applied to the rotation group $\sod$ synchronization. Also, the spectral estimator can also be proved to locate in the effective domain of Riemannian local error bound, which fits the initialization requirement of different algorithms' global convergence rate results. 
\item The convergence property of (quotient) Riemannian gradient method is investigated for solving problem \eqref{Problem0} (also \eqref{ProblemR}). By identifying the iterates keeping in the effective region of the Riemannian local error bound property for such a geodesically nonconvex problem, we show that it converges linearly to the global optimum with spectral initialization. Importantly, this convergence result is not restricted to the convergence in the sense of the equivalent class but also valid for the sequence of iterates.
\end{itemize}
In a word, this work provides an example to analyze algorithms from the intrinsic Riemannian view and utilize the quotient space of the equivalence class to simplify the landscape analysis of optimization problems.



\subsection{Notation}
Throughout the paper, we use the standard notations. Let the Euclidean space of all $m\times n$ real matrices $\mathbb{R}^{m\times n}$ be equipped with inner product $\langle \bm{X},\bm{Y}\rangle:=\operatorname{tr}(\bm{X}^\top \bm{Y})$ for any $\bm{X},\bm{Y}\in\mathbb{R}^{m\times n}$ and its induced Frobenius norm $\Vert\cdot\Vert_F$.
Let $\operatorname{Sym}(n)$ and $\operatorname{Skew}(n)$ be the space of $n\times n$ symmetric and skew-symmetric matrices, respectively. Let $\Vert \cdot\Vert_{*}$ and $\Vert \cdot\Vert$ be the nuclear norm and operator norm, respectively.
Given a real matrix $\bm{X} \in \mathbb{R}^{nd \times nd}$ with $n$ rows of $n$ $d \times d$ blocks each, we use $\bm{X}_{ij}$ (where $i, j \in [n]$) to denote its $(i,j)$-th $d\times d$ block and $\bm{X}_{i,:}$ (where $i \in [n]$) to denote its $i$-th $d \times nd$ block row. Given a real matrix $\bm{X} \in \mathbb{R}^{nd \times d}$ with $n$ $d \times d$ blocks stacked in a column, we use $\bm{X}_i$ (where $i \in [n]$) to denote the $i$-th $d \times d$ block and set $\|\bm{X}\|_{\infty} :=  \max_{i\in[n]} \left\| \bm{X}_i \right\|_F$.



\subsection{Organization}
The remainder of this paper is organized as follows. In section \ref{section-pre}, basic facts on Riemannian manifolds and some preliminary results for the orthogonal/rotation group are presented. Properties of orthogonal group synchronization \eqref{Problem0} from the quotient geometric view are discussed in section \ref{section-quo-geometry}. The improved estimation performance which plays a important role for validating existing relaxation approaches for solving rotation group synchronization is shown in section \ref{section-improvedest}. The landscape analysis of problem \eqref{Problem} from the quotient manifold is provided in section  \ref{section-landscape}. In section \ref{section-rie-algos}, we show the convergence result of the (quotient) Riemannian gradient method with spectral initialization
under a general additive noise model. 
We end with conclusions and future directions in section \ref{section-con}.

\section{Preliminaries}
\label{section-pre}

\subsection{Basic Facts on Riemannian Manifolds}
First, we recall some basic concepts and results on Riemannian manifolds. Let $\mathcal{M}$ be a complete connected $d$-dimensional Riemannian manifold. 
The tangent space at $\bm{X}\in\mathcal{M}$ is denoted by $\operatorname{T}_{\bm{X}} \mathcal{M}$ 
and the tangent bundle of $\mathcal{M}$ is denoted by $\operatorname{T} \mathcal{M}:=\bigcup_{\bm{X} \in \mathcal{M}} \operatorname{T}_{\bm{X}} \mathcal{M}$. 
Let $\langle\cdot,\cdot\rangle_{\bm{X}}$ be a Riemannian metric on $\mathcal{M}$  with 
the induced norm on $\operatorname{T}_{\bm{X}} \mathcal{M}$ for each  $\bm{X} \in \mathcal{M}$ that $\left\|\bm{\xi}_{\bm{X}}\right\|_{\bm{X}}:=\sqrt{\langle\bm{\xi}_{\bm{X}}, \bm{\xi}_{\bm{X}}\rangle_{\bm{X}}}$. Denote the Riemannian (i.e., Levi-Civita) connection $\nabla$ for the Riemannian manifold $\mathcal{M}$. For any two  points $\bm{X},\bm{Y}\in \mathcal{M}$, let $\gamma:[0,1]\rightarrow \mathcal{M}$ be a smooth curve connecting $\bm{X}$ and $\bm{Y}$. Then the arc-length of $\gamma$ is defined by $l(\gamma):=\int_{0}^{1}\Vert \dot{\gamma}(t)\Vert\mathrm{d}t$, and the Riemannian distance from $\bm{X}$ to $\bm{Y}$ by $\operatorname{d}^{\mathcal{M}}(\bm{X},\bm{Y}):=\inf_{\gamma}l(\gamma)$, where the infimum is taken over all piecewise smooth curves $\gamma:[0,1]\rightarrow \mathcal{M}$ connecting $\bm{X}$ and $\bm{Y}$. 
For a smooth curve $\gamma$, 
if $\dot{\gamma}$ is parallel along itself (i.e., $\nabla_{\dot{\gamma}}{\dot{\gamma}}=0$), then  $\gamma$ is called a geodesic.
A geodesic $\gamma:[0,1]\rightarrow \mathcal{M}$ joining $\bm{X}$ to $\bm{Y}$ is minimal if its arc-length equals its Riemannian distance between $\bm{X}$ and $\bm{Y}$. 
Also, up to parameterization, all minimizing curves 
are geodesics \cite[Theorem 6.4]{lee2018introduction}.
 By the Hopf-Rinow theorem,  there is at least one minimal geodesic joining $\bm{X}$ to $\bm{Y}$ for any points $\bm{X}$ and $\bm{Y}$ for a complete metric space $(\mathcal{M},\operatorname{d}^{\mathcal{M}})$.
The exponential map Exp: $\operatorname{T} \mathcal{M} \rightarrow \mathcal{M}$ is defined by
\[\operatorname{Exp}(\bm{X}, \bm{\xi_X})=\operatorname{Exp}_{\bm{X}}(\bm{\xi_X})=\gamma_{\bm{\xi_X}}(1),\]
where $\gamma_{\bm{\xi_X}}: [0,1] \rightarrow \mathcal{M}$ is a geodesic satisfying $\gamma_{\bm{\xi_X}}(0)=\bm{X}$ and $\dot{\gamma}_{\bm{\xi_X}}(0)=\bm{\xi_X}$, and $\operatorname{Exp}_{\bm{X}}$ is the restriction defined on $\operatorname{T}_{\bm{X}} \mathcal{M}$. For each $\bm{X}\in \mathcal{M}$, the exponential map at $\bm{X}$, $\Exp_{\bm{X}}:\operatorname{T}_{\bm{X}} \mathcal{M}\rightarrow \mathcal{M}$  is well-defined and smooth on $\operatorname{T}_{\bm{X}}\mathcal{M}$ \cite[Proposition 5.19]{lee2018introduction}. The concept of the geodesically (strongly) convex set is consistent with \cite[Definition 11.2 and 11.17]{boumal2023introduction}.

Let $f: \mathcal{M} \rightarrow \mathbb{R}$ be a smooth function. Then the Riemannian gradient of $f$ is a vector field $\operatorname{grad} f$ as  the unique element  in $\operatorname{T}_{\bm{X}} \mathcal{M}$ for any $\bm{X} \in \mathcal{M}$ such that
\[
\langle\grad f(\bm{X}), \bm{\xi_{X}}\rangle_{\bm{X}}=\operatorname{D} f(\bm{X})\left[\bm{\xi_{X}}\right]\quad \text{for each}\ \bm{\xi_{X}} \in \operatorname{T}_{\bm{X}} \mathcal{M},
\]
where $\operatorname{D}f$ is the differential of the function $f$. The Riemannian Hessian of $f$ at $\bm{X} \in \mathcal{M}$ is defined as the linear mapping from $\operatorname{T}_{\bm{X}} \mathcal{M}$ to $\operatorname{T}_{\bm{X}} \mathcal{M}$ such that 
\[
\Hess f(\bm{X})\left[\bm{\xi_{X}}\right]=\nabla_{\bm{\xi_{X}}} \grad f(\bm{X})\quad \text{for each}\ \bm{\xi_{X}} \in \operatorname{T}_{\bm{X}} \mathcal{M}.
\]
The concept of the geodesically (strongly) convex function is consistent with \cite[Definition 11.3 and 11.5]{boumal2023introduction}.

\subsection{Properties of Orthogonal/Rotation Group}
Now, we introduce some basic results about orthogonal/rotation group. 
The orthogonal/rotation group $\og$ is an embedded submanifold of $\mathbb{R}^{d\times d}$ \cite[Section 3.3.2]{absil2009optimization}. 
We consider the Riemannian metric on $\og$ that is induced from the Euclidean inner product, i.e, at $\bm{U}\in \og$ we have $\langle\bm{\xi}, \bm{\eta}\rangle_{\bm{U}}:=\operatorname{tr}(\bm{\xi}^{\top} \bm{\eta})$ for any $\bm{\xi}, \bm{\eta} \in \operatorname{T}_{\bG} \og$. For the product Riemannian manifold $\ogn$, the Riemannian metric is defined by $\langle \bm{\xi},\bm{\eta}\rangle_{\bG}=\sum_{i=1}^n \operatorname{tr}(\bm{\xi}_i^{\top} \bm{\eta}_i)$ for any $\bm{\xi}:=[\bm{\xi}_1,\ldots,\bm{\xi}_n]$, $\bm{\eta}:=[\bm{\eta}_1,\ldots,\bm{\eta}_n] \in \operatorname{T}_{G} \ogn$ at $\bG\in\ogn$.
From the definition we know that
\begin{align}
\mathcal{O}(d)^n&=\{\bG\in\mathbb{R}^{nd\times d}\mid \bG=[\bG_1;\ldots;\bG_n],\ \bG_i \bG_i^\top=\bG_i^\top \bG_i =\bm{I}_d, \ i\in [n]\}\notag
\end{align}
and  $\mathcal{SO}(d)^n=\{\bG\in\mathcal{O}(d)^n\mid  \operatorname{det}(\bG_i)=1, \ i\in [n]\}$.
 Note that the tangent space to $\ogn$ at $\bG \in \ogn$ is given by
\begin{align}
  \operatorname{T}_{\bG}\ogn 
  &=\{\bm{H}=\left[\bm{H}_{1} ; \ldots ; \bm{H}_{n}\right] \in \mathbb{R}^{n d \times d}  \mid  \bm{H}_{i}=\bm{G}_{i}\bm{E}_{i} , \bm{E}_{i}=-\bm{E}_{i}^{\top}, i \in[n]\}.\notag
  \end{align}
The projection 
onto the orthogonal/rotation group $\og$, denoted as $\Pi_{\og}(\cdot)$, which in particular has a closed-form solution 
(cf.\ \cite{liu2020unified}) that for each $\bm{Z}\in \mathbb{R}^{d\times d}$ with singular value decomposition $\bm{Z} = \bm{U}_{\bm{Z}}\bm{\Sigma}_{\bm{Z}}\bm{V}_{\bm{Z}}^\top$, 
\begin{align} 
   \Pi_{\mathcal{O}(d)}(\bm{Z})&:=\left\{\bm{X}\in \mathcal{O}(d) \ \bigg| \ \Vert \bm{X}-\bm{Z}\Vert_F=\inf_{\bm{Y}\in \od}\|\bm{Y}-\bm{Z}\|_F\right\} = \bm{U}_{\bm{Z}} \bm{V}_{\bm{Z}}^\top,  \label{eq: orthoprocru}\\
   \Pi_{\sod}(\bm{Z})&:=\left\{\bm{X}\in \sod \ \bigg| \ \Vert \bm{X}-\bm{Z}\Vert_F=\inf_{\bm{Y}\in \sod}\|\bm{Y}-\bm{Z}\|_F\right\}=\bm{U}_{\bm{Z}} \bm{I}_{\bm{Z}} \bm{V}_{\bm{Z}}^{\top},\label{eq: sorthoprocru}
\end{align}  
where $\bm{I}_{\bm{Z}}:=\operatorname{Diag} ( [1;\ldots;1;\operatorname{det}(\bm{U}_{\bm{Z}} \bm{V}_{\bm{Z}}^{\top}) ] )\in \mathbb{R}^{d \times d}$.
Note that the solution to 
\eqref{eq: sorthoprocru} can be not unique as the polar decomposition of $\bm{Z}$ can be not unique.
Moreover, we define $ \Pi_{\ogn}(\bG) := [\Pi_{\og}(\bG_1);\ldots;\Pi_{\og}(\bG_n)]$, where $\bG=[\bG_1;\ldots;\bG_n]\in\mathbb{R}^{nd\times d}$.


The exponential map on $\og$ is $\Exp:\operatorname{T} \og\rightarrow \og$ given by  \cite[Example 4.12]{absil2009optimization} (also see \cite[(2.14)]{edelman1998geometry} for details)
\[
\Exp(\bm{U},\bm{UE})=\Exp_{\bm{U}}(\bm{UE})= \bm{U}\exp(\bm{E}) \quad \text{for any}\ (\bm{U},\bm{UE})\in \operatorname{T} \og.
\]
where  $\exp(\bm{E}):=\sum_{k=0}^{\infty} \frac{1}{k !} \bm{E}^{k}$ is the matrix exponential map, since  the Lie exponential map (i.e., matrix exponential) and the Riemannian exponential map coincide for bi-invariant Riemannian metric on the Lie group $\og$; see \cite[Problem 5-8]{lee2018introduction}.
Also, from the given Riemannian metric for the product manifold $\ogn$, we know that for any $\bG\in \ogn$ and $\bm{\xi}:=[\bG_1\bm{E}_1;\ldots;\bG_n \bm{E}_n]\in\operatorname{T} \ogn$, $\Exp:\operatorname{T} \ogn\rightarrow \ogn$ is defined by 
\[
\Exp(\bG,\bm{\xi})=\Exp_{\bG}(\bm{\xi})= [\bG_1\exp(\bm{E}_1);\ldots; \bG_n\exp(\bm{E}_n)].
\]
Then we introduce the following useful lemma characterizes the Lipschitz constant of the exponential map on the tangent space. We refer the reader to Appendix \ref{appendix-A} for proof details.
\begin{lemma}
\label{lemma-exp-lip}
Let $\bm{E}_1,\bm{E}_2\in\operatorname{Skew}(d)$. Then
\[
\|\exp(\bm{E}_2)-\exp(\bm{E}_1)\|_F\leq \|\bm{E}_2-\bm{E}_1\|_F.
\]
Furthermore, for any $\bG\in \ogn$ and $\bm{\xi}:=[\bG_1\bm{E}_1;\ldots;\bG_n \bm{E}_n] \in \operatorname{T}_{\bG} \ogn$, it follows that
\begin{align}
&\left\|\operatorname{Exp}_{\bG}(\bm{\xi})-\bG\right\|_{F} \leq \|\bm{\xi}\|_{F} 
,\label{retr-key-ineq-1}\\ 
&\left\|\operatorname{Exp}_{\bG}(\bm{\xi})-(\bG+\bm{\xi})\right\|_{F} \leq \frac{1}{2}\|\bm{\xi}\|_{F}^{2}
.\label{retr-key-ineq-2}
\end{align}
\end{lemma}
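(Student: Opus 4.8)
The plan is to prove the three inequalities in order, each one feeding into the next. The first inequality $\|\exp(\bm{E}_2)-\exp(\bm{E}_1)\|_F \le \|\bm{E}_2-\bm{E}_1\|_F$ is the crux: I would establish it by writing $\exp(\bm{E}_2)-\exp(\bm{E}_1)$ as an integral of a derivative along the segment $\bm{E}(t) := \bm{E}_1 + t(\bm{E}_2-\bm{E}_1)$, $t\in[0,1]$. Since each $\bm{E}(t)$ is skew-symmetric, each $\exp(\bm{E}(t))\in\mathcal{SO}(d)$ is orthogonal; the key structural fact is that $\frac{d}{dt}\exp(\bm{E}(t)) = \exp(\bm{E}(t))\,\Phi(t)$ where $\Phi(t)$ is a skew-symmetric matrix whose Frobenius norm is bounded by $\|\bm{E}_2-\bm{E}_1\|_F$. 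Concretely, using the standard formula for the differential of the matrix exponential, $\Phi(t) = \big(\sum_{k\ge 0}\frac{(-1)^k}{(k+1)!}(\operatorname{ad}_{\bm{E}(t)})^k\big)[\bm{E}_2-\bm{E}_1]$; because $\operatorname{ad}_{\bm{E}(t)}$ is a normal operator on the space of matrices equipped with the Frobenius inner product (indeed $\bm{X}\mapsto \bm{E}(t)\bm{X}-\bm{X}\bm{E}(t)$ is skew-adjoint, hence normal), the entire analytic function of it has operator norm at most $\sup_{\theta\in\mathbb{R}}|\sum_k \frac{(-1)^k}{(k+1)!}(i\theta)^k| = \sup_\theta |\frac{e^{-i\theta}-1}{-i\theta}| \le 1$. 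Therefore $\|\Phi(t)\|_F \le \|\bm{E}_2-\bm{E}_1\|_F$, and since left multiplication by the orthogonal matrix $\exp(\bm{E}(t))$ is a Frobenius isometry, $\|\frac{d}{dt}\exp(\bm{E}(t))\|_F = \|\Phi(t)\|_F \le \|\bm{E}_2-\bm{E}_1\|_F$. Integrating over $t\in[0,1]$ gives the claim. (An alternative, possibly cleaner route avoiding the $\operatorname{ad}$-calculus: bound $\|\exp(\bm{E}_2)-\exp(\bm{E}_1)\|_F$ directly via the power-series telescoping $\exp(\bm{E}_2)-\exp(\bm{E}_1) = \sum_{k\ge 1}\frac{1}{k!}(\bm{E}_2^k-\bm{E}_1^k)$, using $\bm{E}_2^k-\bm{E}_1^k = \sum_{j=0}^{k-1}\bm{E}_2^{j}(\bm{E}_2-\bm{E}_1)\bm{E}_1^{k-1-j}$ and the submultiplicativity of the operator norm together with $\|\bm{E}_i\|\le 1$ only when applicable — but this needs care since $\|\bm{E}_i\|$ need not be $\le 1$, so the integral/$\operatorname{ad}$ argument is the safe one.)

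For the second inequality, I would use the product structure. By the displayed formula for the exponential map on $\ogn$, $\operatorname{Exp}_{\bG}(\bm{\xi}) - \bG = [\bG_1(\exp(\bm{E}_1)-\bm{I}_d);\ldots;\bG_n(\exp(\bm{E}_n)-\bm{I}_d)]$, so that $\|\operatorname{Exp}_{\bG}(\bm{\xi})-\bG\|_F^2 = \sum_{i=1}^n \|\bG_i(\exp(\bm{E}_i)-\bm{I}_d)\|_F^2 = \sum_{i=1}^n \|\exp(\bm{E}_i)-\bm{I}_d\|_F^2$, using that $\bG_i$ is orthogonal. Now applying the first inequality with $\bm{E}_2 = \bm{E}_i$ and $\bm{E}_1 = \bm{0}$ (noting $\exp(\bm{0}) = \bm{I}_d$) gives $\|\exp(\bm{E}_i)-\bm{I}_d\|_F \le \|\bm{E}_i\|_F$, hence $\|\operatorname{Exp}_{\bG}(\bm{\xi})-\bG\|_F^2 \le \sum_i \|\bm{E}_i\|_F^2 = \|\bm{\xi}\|_F^2$, since $\|\bm{\xi}_i\|_F = \|\bG_i\bm{E}_i\|_F = \|\bm{E}_i\|_F$.

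For the third inequality, again reduce blockwise: $\operatorname{Exp}_{\bG}(\bm{\xi}) - (\bG+\bm{\xi}) = [\bG_1(\exp(\bm{E}_1)-\bm{I}_d-\bm{E}_1);\ldots]$, so $\|\operatorname{Exp}_{\bG}(\bm{\xi})-(\bG+\bm{\xi})\|_F^2 = \sum_i \|\exp(\bm{E}_i)-\bm{I}_d-\bm{E}_i\|_F^2$. The scalar estimate I want is $\|\exp(\bm{E})-\bm{I}_d-\bm{E}\|_F \le \tfrac12\|\bm{E}\|_F^2$ for skew-symmetric $\bm{E}$; this follows from the same integral technique: $\exp(\bm{E})-\bm{I}_d-\bm{E} = \int_0^1 (1-s)\,\tfrac{d^2}{ds^2}\exp(s\bm{E})\,ds$ in the Taylor-with-remainder form, where $\tfrac{d^2}{ds^2}\exp(s\bm{E}) = \exp(s\bm{E})\bm{E}^2$ (here $\bm{E}$ commutes with itself, so no $\operatorname{ad}$-terms), and $\|\exp(s\bm{E})\bm{E}^2\|_F = \|\bm{E}^2\|_F \le \|\bm{E}\|\cdot\|\bm{E}\|_F \le \|\bm{E}\|_F^2$ — wait, this bound needs $\|\bm{E}\|\le\|\bm{E}\|_F$ which is always true, giving the constant $\tfrac12\int_0^1(1-s)\,ds\cdot 1$... let me be careful: $\int_0^1(1-s)\,ds = \tfrac12$, so $\|\exp(\bm{E})-\bm{I}_d-\bm{E}\|_F \le \tfrac12\|\bm{E}^2\|_F \le \tfrac12\|\bm{E}\|_F^2$. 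Summing, $\|\operatorname{Exp}_{\bG}(\bm{\xi})-(\bG+\bm{\xi})\|_F^2 \le \tfrac14\sum_i\|\bm{E}_i\|_F^4 \le \tfrac14\big(\sum_i\|\bm{E}_i\|_F^2\big)^2 = \tfrac14\|\bm{\xi}\|_F^4$, and taking square roots yields $\|\operatorname{Exp}_{\bG}(\bm{\xi})-(\bG+\bm{\xi})\|_F \le \tfrac12\|\bm{\xi}\|_F^2$.

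The main obstacle is the first inequality: the naive power-series bound does not close because $\|\bm{E}_i\|$ can exceed $1$, so one genuinely needs the structural observation that differentiating $\exp$ along a segment of skew-symmetric matrices produces a factor that is (left-translation by) an orthogonal matrix times a skew-symmetric correction with controlled Frobenius norm — equivalently, that $\operatorname{ad}_{\bm{E}}$ is a normal (skew-adjoint) operator so that the relevant entire function of it is a contraction. Everything after that is bookkeeping with the product metric and orthogonality of the $\bG_i$.
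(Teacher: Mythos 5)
Your proposal is correct, and its overall architecture coincides with the paper's: establish the $1$-Lipschitz bound $\|\exp(\bm{E}_2)-\exp(\bm{E}_1)\|_F\leq\|\bm{E}_2-\bm{E}_1\|_F$ by bounding the differential of the matrix exponential along a segment of skew-symmetric matrices, then reduce \eqref{retr-key-ineq-1} and \eqref{retr-key-ineq-2} blockwise using orthogonality of the $\bG_i$ and the $\ell_2$-summation $\sqrt{\sum_i a_i^2}$ (including the correct step $\sum_i\|\bm{E}_i\|_F^4\leq(\sum_i\|\bm{E}_i\|_F^2)^2$ for the quadratic bound). The two places where your implementation differs are worth noting. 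First, to show $\|\operatorname{D}\exp(\bm{E})\|\leq 1$ you invoke the $\operatorname{ad}$-series $\frac{1-e^{-\operatorname{ad}_{\bm{E}}}}{\operatorname{ad}_{\bm{E}}}$ together with the spectral calculus of the skew-adjoint operator $\operatorname{ad}_{\bm{E}}$ and the scalar bound $|e^{-i\theta}-1|\leq|\theta|$; the paper instead uses the more elementary integral representation $\operatorname{D}\exp(\bm{E})[\bm{\Delta}\bm{E}]=\int_0^1\exp(s\bm{E})\,\bm{\Delta}\bm{E}\,\exp((1-s)\bm{E})\,\mathrm{d}s$ and the fact that multiplication by the orthogonal factors $\exp(s\bm{E})$, $\exp((1-s)\bm{E})$ is a Frobenius isometry. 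Both yield the same conclusion; the paper's route avoids spectral theory of normal operators, while yours is a standard Lie-theoretic identity. Second, for the quadratic estimate $\|\exp(\bm{E})-\bm{I}_d-\bm{E}\|_F\leq\tfrac12\|\bm{E}\|_F^2$ you use the second-order Taylor remainder along the ray $s\mapsto s\bm{E}$ (where everything commutes, so $\tfrac{\mathrm{d}^2}{\mathrm{d}s^2}\exp(s\bm{E})=\exp(s\bm{E})\bm{E}^2$ and $\|\bm{E}^2\|_F\leq\|\bm{E}\|\,\|\bm{E}\|_F\leq\|\bm{E}\|_F^2$), whereas the paper first proves that $\operatorname{D}\exp$ is itself $1$-Lipschitz on $\operatorname{Skew}(d)$ and then uses a first-order remainder. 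Your route is shorter for this specific estimate; the paper's yields the slightly more general statement $\|\exp(\bm{E}')-\exp(\bm{E})-\operatorname{D}\exp(\bm{E})(\bm{E}'-\bm{E})\|_F\leq\tfrac12\|\bm{E}'-\bm{E}\|_F^2$ at an arbitrary base point. Your own caveat about the naive power-series telescoping failing when $\|\bm{E}_i\|>1$ is well placed and is precisely why both you and the paper route through the differential.
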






\section{Quotient Geometry of Orthogonal Group Synchronization}
\label{section-quo-geometry}
\subsection{Quotient Manifold $\mathcal{Q}$}
The quotient manifold $\mathcal{Q}:=\odn/\od$ (well-definedness via the quotient manifold theorem  
\cite[Theorem 21.10]{lee2012introduction}) is based on the following equivalence relation on $\odn$:
\[
\bG \sim \bG' \Longleftrightarrow\left\{\bG \bQ \mid \bQ \in \od \right\}=\left\{ \bG' \bQ \mid \bQ \in \od\right\}.
\]
Then we know that $\mathcal{Q}=\{[\bG] \mid \bG \in \odn\}$, where
$
[\bG]:=\left\{\bar{\bG} \in \odn \mid \bar{\bG}=\bG \bQ,\ \bQ \in \od\right\}.
$
Define the canonical projection $\pi: \odn \rightarrow \mathcal{Q}$ by
\[
\pi(\bG):=[\bG] \quad \text{for each}\ \bG \in \odn.
\]
Moreover, let $\pi^{-1}$ be the preimage of $\pi$, then for any $\bG\in\odn$ it follows that 
\[
[\bG]=\pi^{-1}(\pi(\bG))\quad \text{and}\quad \operatorname{dim} \pi^{-1}(\pi(\bG))=\operatorname{dim} \od =\frac{1}{2} d(d-1).
\]
Since $\odn$ is the total space of $\mathcal{Q}$, from \cite[Proposition 3.4.4]{absil2009optimization} one has that
\[
\begin{aligned}
\operatorname{dim} \mathcal{Q} &=\operatorname{dim} \odn-\operatorname{dim} \pi^{-1}(\pi(\bG)) 
=\frac{1}{2}(n-1)d(d-1) .
\end{aligned}
\]
Now, we are going to investigate the tangent space of the quotient manifold $\mathcal{Q}$. 
Although it is difficult to obtain the explicit formula of $\operatorname{T}_{[\bG]} \mathcal{Q}$, we can derive its lifted representation on the tangent space of $\odn$ as follows. For any $[\bG] \in \mathcal{Q}$, let $\bm{\xi}_{[\bG]}\in \operatorname{T}_{[\bG]} \mathcal{Q}$ and $\bar{\bG}\in\pi^{-1}([\bG])$.
Define 
\[
\bar{\bm{\xi}}_{\bar{\bG}} \in \operatorname{T}_{\bar{\bG}}\odn\quad \text{such that}\quad\operatorname{D} \pi(\bar{\bG})\left[\bar{\bm{\xi}}_{\bar{\bG}}\right]=\bm{\xi}_{[\bG]}
\]
as a representation of $\bm{\xi}_{[\bG]}\in \operatorname{T}_{[\bG]} \mathcal{Q}$ on $\operatorname{T}_{\bar{\bG}}\odn$. Since there are infinite representations $\bar{\bm{\xi}}_{\bar{\bG}}$ of $\bm{\xi}_{[\bG]}$ at $\bar{\bG}$, it is desirable to identify a unique lifted representation of tangent vectors of $\operatorname{T}_{[\bG]} \mathcal{Q}$ in $\operatorname{T}_{\bar{\bG}}\odn$. The equivalence class $\pi^{-1}([\bG])$ is an embedded submanifold of $\odn$ \cite[Proposition 3.4.4]{absil2009optimization}, then $\pi^{-1}([\bG])$ admits a tangent space called the vertical space at $\bar{\bG}$:
\[
\mathcal{V}_{\bar{\bG}}:=\operatorname{T}_{\bar{\bG}}(\pi^{-1}([\bG])).
\]
Let the subspace $\mathcal{H}_{\bar{\bG}}\subseteq \operatorname{T}_{\bar{\bG}}\odn$ such that $\mathcal{H}_{\bar{\bG}} \oplus \mathcal{V}_{\bar{\bG}}=\operatorname{T}_{\bar{\bG}} \odn$ be called the horizontal space at $\bar{\bG}$.
Once $\odn$ has a horizontal space at $\bar{\bG}$, there exists one and only one element $\bar{\bm{\xi}}_{\bar{\bG}}$ that belongs to $\mathcal{H}_{\bar{\bG}}$ and satisfies $\operatorname{D} \pi(\bar{\bG})\left[\bar{\bm{\xi}}_{\bar{\bG}}\right]=\bm{\xi}_{[\bG]}$, and the unique vector $\bar{\bm{\xi}}_{\bar{\bG}}\in \mathcal{H}_{\bar{\bG}}$ is called the horizontal lift of $\bm{\xi}_{[\bG]}\in\operatorname{T}_{[\bG]} \mathcal{Q}$ at $\bar{\bG}$.

\begin{figure}[ht]
    \centering
    \includegraphics[width=0.5\textwidth]{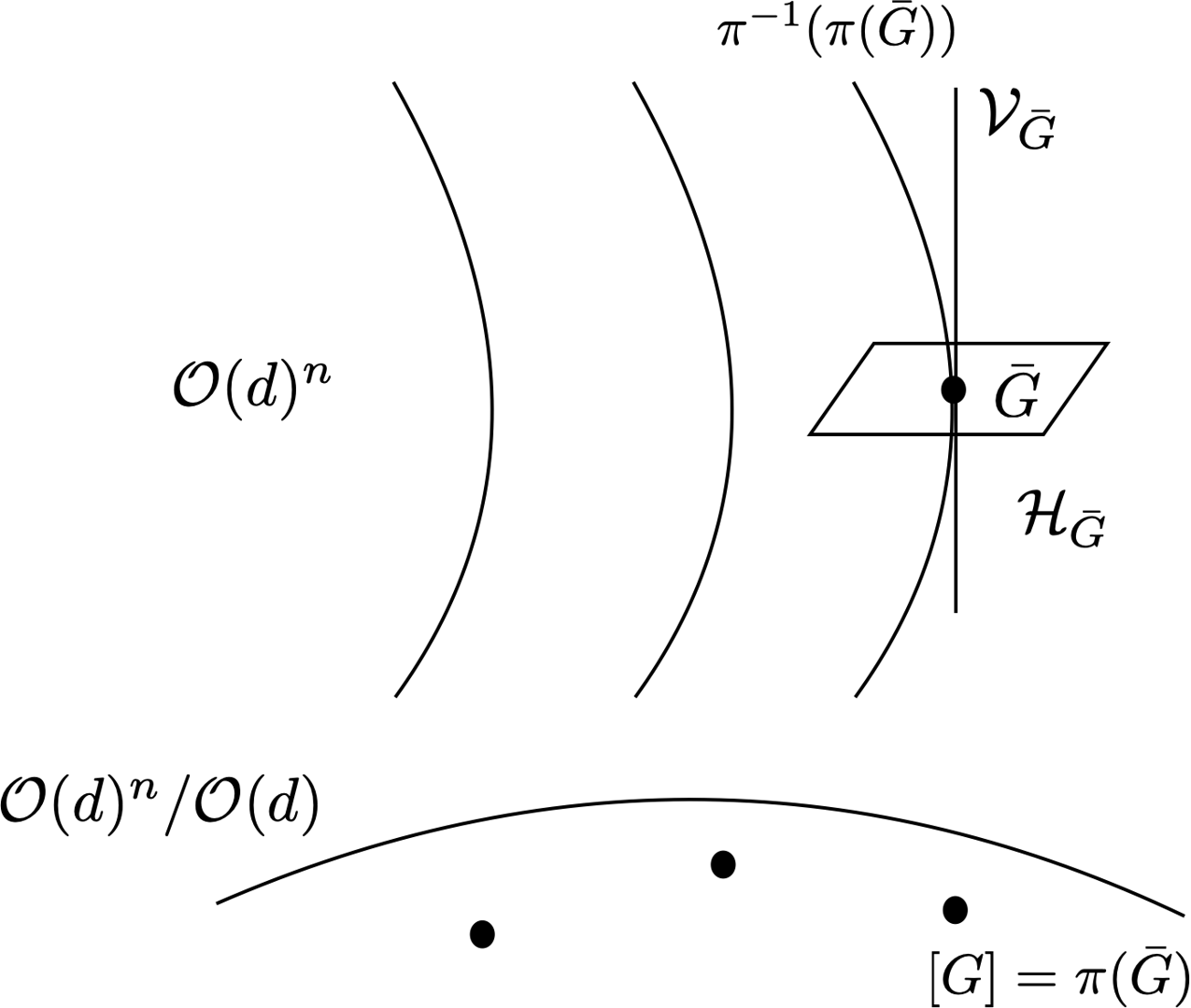}
    \caption{Tangent space of quotient manifolds}
    \label{fig:tangentspace}
\end{figure}

In order to derive the vertical space and horizontal space related to the quotient manifold $\mathcal{Q}$, we use the Riemannian metric $\langle\cdot,\cdot\rangle_{\bG}$ defined on $\odn$. Then we have the following calculation results.

\begin{lemma}
Let $[\bG]\in\mathcal{Q}$ and $\bar{\bG}\in \pi^{-1}([\bG])$. Then the vertical and horizontal space at $\bar{\bG}$ has the form that
\begin{align*}
\mathcal{V}_{\bar{\bG}}
&
=\left\{\bar{\bG} \bm{E} \ \Big| \ \bm{E}=-\bm{E}^\top,\ \bm{E} \in \mathbb{R}^{d \times d}\right\},\\
\mathcal{H}_{\bar{\bG}}
&=\left\{\bm{\xi}_{\bar{\bG}} \in \mathbb{R}^{n d \times d} \ \Bigg| \ (\bm{\xi}_{\bar{\bG}})_{i}=\bar{\bG}_{i} \bm{E}_{i},\ \bm{E}_{i}=-\bm{E}_{i}^{\top}, i \in[n]\ \text{and}\ \sum_{i=1}^n \bm{E}_i=\bm{0} \right\}.
\end{align*}
Moreover, the orthogonal projection of any element $\bm{\eta}_{\bar{\bG}} \in \operatorname{T}_{\bar{\bG}}\odn$ onto $\mathcal{H}_{\bar{\bG}}$ at $\bar{\bG}$ is given by
$$
\operatorname{Proj}_{\mathcal{H}_{\bar{\bG}}} (\bm{\eta}_{\bar{\bG}})=\left(\bm{I}_{nd}-\frac{1}{n} \bar{\bG}\bar{\bG}^\top  \right) \bm{\eta}_{\bar{\bG}}.
$$
\end{lemma}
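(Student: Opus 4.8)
The plan is to compute the three objects in order: first the vertical space $\mathcal{V}_{\bar{\bG}}$, then the horizontal space $\mathcal{H}_{\bar{\bG}}$ as its orthogonal complement inside $\operatorname{T}_{\bar{\bG}}\odn$, and finally the projection formula. For the vertical space, I would observe that $\pi^{-1}([\bG])=\{\bar{\bG}\bQ \mid \bQ\in\od\}$ is the orbit of $\bar{\bG}$ under the right $\od$-action, so a curve in it through $\bar{\bG}$ has the form $t\mapsto \bar{\bG}\bQ(t)$ with $\bQ(0)=\bm{I}_d$ and $\bQ(t)\in\od$; differentiating gives $\bar{\bG}\dot{\bQ}(0)$ with $\dot{\bQ}(0)\in\operatorname{T}_{\bm{I}_d}\od=\operatorname{Skew}(d)$. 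Hence $\mathcal{V}_{\bar{\bG}}=\{\bar{\bG}\bm{E}\mid \bm{E}\in\operatorname{Skew}(d)\}$. (Note each block is $\bar{\bG}_i\bm{E}$ with the \emph{same} $\bm{E}$, which is what makes this a proper subspace of $\operatorname{T}_{\bar{\bG}}\odn$, whose elements have blocks $\bar{\bG}_i\bm{E}_i$ with \emph{independent} $\bm{E}_i$.)

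Next, $\mathcal{H}_{\bar{\bG}}$ is defined as the orthogonal complement of $\mathcal{V}_{\bar{\bG}}$ in $\operatorname{T}_{\bar{\bG}}\odn$ with respect to the metric $\langle\bm{\xi},\bm{\eta}\rangle_{\bar{\bG}}=\sum_{i=1}^n\operatorname{tr}(\bm{\xi}_i^\top\bm{\eta}_i)$. Take a generic tangent vector $\bm{\xi}_{\bar{\bG}}$ with blocks $\bar{\bG}_i\bm{E}_i$, $\bm{E}_i\in\operatorname{Skew}(d)$, and a generic vertical vector with blocks $\bar{\bG}_i\bm{E}$, $\bm{E}\in\operatorname{Skew}(d)$. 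Using $\bar{\bG}_i^\top\bar{\bG}_i=\bm{I}_d$, the inner product is $\sum_i\operatorname{tr}(\bm{E}_i^\top\bm{E})=\operatorname{tr}\big((\sum_i\bm{E}_i)^\top\bm{E}\big)$. Requiring this to vanish for all $\bm{E}\in\operatorname{Skew}(d)$ forces the skew-symmetric part of $\sum_i\bm{E}_i$ to be zero; since each $\bm{E}_i$ is already skew, $\sum_i\bm{E}_i$ is skew, so the condition is exactly $\sum_i\bm{E}_i=\bm{0}$. This yields the stated description of $\mathcal{H}_{\bar{\bG}}$.

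For the projection formula, I would verify that $P:=\bm{I}_{nd}-\frac{1}{n}\bar{\bG}\bar{\bG}^\top$ applied to any $\bm{\eta}_{\bar{\bG}}\in\operatorname{T}_{\bar{\bG}}\odn$ (i) lands in $\operatorname{T}_{\bar{\bG}}\odn$, (ii) lands in $\mathcal{H}_{\bar{\bG}}$, and (iii) differs from $\bm{\eta}_{\bar{\bG}}$ by a vertical vector. Write $\bm{\eta}_{\bar{\bG}}$ with blocks $\bar{\bG}_i\bm{E}_i$. Since $\bar{\bG}^\top\bm{\eta}_{\bar{\bG}}=\sum_i\bar{\bG}_i^\top\bar{\bG}_i\bm{E}_i=\sum_i\bm{E}_i=:S\in\operatorname{Skew}(d)$, the $i$-th block of $P\bm{\eta}_{\bar{\bG}}$ is $\bar{\bG}_i\bm{E}_i-\frac{1}{n}\bar{\bG}_i S=\bar{\bG}_i(\bm{E}_i-\frac1n S)$; each $\bm{E}_i-\frac1n S$ is skew (so (i) holds), and the new blockwise skew parts sum to $S-S=\bm{0}$ (so (ii) holds). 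Finally $\bm{\eta}_{\bar{\bG}}-P\bm{\eta}_{\bar{\bG}}$ has $i$-th block $\frac1n\bar{\bG}_i S$, i.e.\ it is $\bar{\bG}\,(\frac1n S)$ with $\frac1n S\in\operatorname{Skew}(d)$, hence vertical (so (iii) holds). By uniqueness of orthogonal decomposition this identifies $P\bm{\eta}_{\bar{\bG}}$ as $\operatorname{Proj}_{\mathcal{H}_{\bar{\bG}}}(\bm{\eta}_{\bar{\bG}})$.

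I expect no serious obstacle here; the computation is routine linear algebra once the orbit structure is recognized. The only point requiring mild care is keeping straight the distinction between a single shared generator $\bm{E}$ (vertical) and independent block generators $\bm{E}_i$ (full tangent space), and making sure the orthogonality condition is tested against \emph{all} vertical vectors so that one correctly concludes $\sum_i\bm{E}_i=\bm{0}$ rather than a weaker statement; the factor $1/n$ in the projection arises precisely because the vertical direction $\bar{\bG}\bm{E}$ has squared norm $n\|\bm{E}\|_F^2$.
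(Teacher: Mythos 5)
Your proof is correct and complete: the identification of the vertical space as the tangent space to the orbit, the orthogonality computation reducing to $\operatorname{tr}\bigl((\sum_i\bm{E}_i)^\top\bm{E}\bigr)=0$ for all skew $\bm{E}$, and the three-part verification of the projection formula are all sound, and you correctly flag the key subtlety (shared generator $\bm{E}$ for vertical vectors versus independent $\bm{E}_i$ for general tangent vectors, and the origin of the $1/n$). The paper states this lemma without proof, and your argument is precisely the routine computation the authors evidently had in mind.
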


Define a Riemannian metric $\langle\cdot, \cdot\rangle_{[\bG]}$ on the quotient manifold $\mathcal{Q}$ by
\[
\langle\bm{\xi}_{[\bG]}, \bm{\zeta}_{[\bG]}\rangle_{[\bG]}:=\langle \bar{\bm{\xi}}_{\bar{\bG}}, \bar{\bm{\zeta}}_{\bar{\bG}}\rangle=\operatorname{tr}(\bar{\bm{\xi}}_{\bar{\bG}}^\top \bar{\bm{\zeta}}_{\bar{\bG}}), \quad \text{for any}\ \bm{\xi}_{[\bG]}, \bm{\zeta}_{[\bG]} \in \operatorname{T}_{[\bG]} \mathcal{Q}, \ [\bG] \in \mathcal{Q},
\]
where $\bar{\bm{\xi}}_{\bar{\bG}}, \bar{\bm{\zeta}}_{\bar{\bG}} \in \mathcal{H}_{\bar{\bG}}$ are the unique horizontal lifts of $\bm{\xi}_{[\bG]}, \bm{\zeta}_{[\bG]}$ at $\bar{\bG}$, respectively. Since $\bar{\bG} \in \pi^{-1}([\bG])$, $\bar{\bG} \bQ\in \pi^{-1}([\bG])$ for any $\bQ \in \odn$, we can directly verify that
\[
\langle\bar{\bm{\xi}}_{\bar{\bG}\bQ}, \bar{\bm{\zeta}}_{\bar{\bG}\bQ}\rangle=\langle\bar{\bm{\xi}}_{\bar{\bG}}, \bar{\bm{\zeta}}_{\bar{\bG}}\rangle \quad \text{for each}\ \bQ \in \od.
\]



Next, we define the map $\Exp$ on $\mathcal{Q}$ (as an exponential map from \cite[Corollary 9.55]{boumal2023introduction}) as follows:
$$
\Exp_{[\bG]}(\bm{\xi}_{[\bG]}):=\pi (\overline{\Exp}_{\bar{\bG}}(\bar{\bm{\xi}}_{\bar{\bG}})) \quad \text{for each}\ \bm{\xi}_{[\bG]} \in \operatorname{T}_{[\bG]} \mathcal{Q},
$$
where $[\bG]=\pi(\bar{\bG}) \in \mathcal{Q}$, $\bar{\bm{\xi}}_{\bar{\bG}} \in \mathcal{H}_{\bar{\bG}}$ is the horizontal lift of a $\bm{\xi}_{[\bG]} \in \operatorname{T}_{[\bG]} \mathcal{Q}$ at $\bar{\bG}$, and $\overline{\Exp}$ is a exponential map on $\odn$.
Obviously, for the exponential map $\overline{\Exp}$ we have $\pi(\overline{\Exp}_{\bar{\bG}}(\bar{\bm{\xi}}_{\bar{\bG}}))=\pi(\overline{\Exp}_{\bar{\bG}'}(\bar{\bm{\xi}}_{\bar{\bG}'}))$ for all $\bar{\bG}, \bar{\bG}' \in \pi^{-1}([\bG])$. 




The functions $\bar{f}:\odn \rightarrow \mathbb{R}$ and $f: \mathcal{Q} \rightarrow \mathbb{R}$ defined in \eqref{Problem0} and \eqref{Problem} have the following relationship that
\[
f([\bG])=\bar{f}(\pi^{-1}([\bG])) \quad \text { and } \quad \bar{f}(\bG)=f(\pi(\bG)).
\]
Moreover,
\[
\operatorname{D} \bar{f}(\bar{\bG})\left[\bar{\bm{\xi}}_{\bar{\bG}}\right]=\operatorname{D} f(\pi(\bar{\bG}))\left[\operatorname{D} \pi(\bar{\bG})\left[\bar{\bm{\xi}}_{\bar{\bG}}\right]\right]=\operatorname{D} f([\bG])\left[\bm{\xi}_{[\bG]}\right].
\]
Finally, 
we denote the distance $\operatorname{d}_F(\cdot, \cdot)$ and $\operatorname{d}_{\infty}(\cdot, \cdot)$ based on quotient space. For each $[\bm{X}],[\bm{Y}]\in\mathbb{R}^{nd\times d}/\od$, they are defined as
\begin{align*}
	\operatorname{d}_{F}([\bm{X}],[\bm{Y}]) &:=  \min\limits_{\bQ\in \od} \Vert \bm{X}-\bm{Y}\bQ\Vert_F=\Vert \bm{X}-\bm{Y}\bQ^\star\Vert_F,\\
  \operatorname{d}_{\infty}([\bm{X}], [\bm{Y}])&:=
  \min\limits_{\bQ\in \od}\max_{i\in[n]} \Vert \bm{X}_i-\bm{Y}_i \bQ\Vert_F\leq 
  \Vert \bm{X}-\bm{Y}\bQ^\star\Vert_{\infty}=\max_{i\in[n]} \Vert \bm{X}_i-\bm{Y}_i \bQ^\star\Vert_F,
\end{align*}
where $\bQ^\star:=\operatorname{argmin}_{\bQ\in \od} \Vert \bm{X}-\bm{Y}\bQ\Vert_F$.
If $\bm{X},\bm{Y} \in \ogn$, then it follows that
\[
\operatorname{d}_F([\bm{X}],[\bm{Y}])^2 = 2\left(nd - \max_{\bQ\in \od } \langle \bm{Y}^\top \bm{X},\bQ \rangle\right) = 2(nd - \| \bm{Y}^\top \bm{X}\|_*).
\]

Now, we discuss the relationship among different notions of distance, i.e., the quotient distance $\operatorname{d}_F(\cdot, \cdot)$, $\operatorname{d}_{\infty}(\cdot, \cdot)$ and the Riemannian distance $\operatorname{d}^{\mathcal{G}}(\cdot,\cdot)$ and $\operatorname{d}^{\mathcal{Q}}(\cdot,\cdot)/\operatorname{d}^{\mathcal{Q}}_{\infty}(\cdot,\cdot)$ on the original manifold $\og$ and quotient manifold $\mathcal{Q}$, respectively.






\begin{lemma}\label{lemma-dist-com}
Let $\bm{X}, \bm{Y}\in \ogn$. 
Then it follows that $\operatorname{d}_{\infty}^{\mathcal{Q}}([\bm{X}],[\bm{Y}])\leq \operatorname{d}^{\mathcal{Q}}([\bm{X}],[\bm{Y}])$ and
\[
\operatorname{d}_{\infty}([\bm{X}], [\bm{Y}])\leq\operatorname{d}_F([\bm{X}],[\bm{Y}])\leq \operatorname{d}^{\mathcal{Q}}([\bm{X}],[\bm{Y}])\leq\operatorname{d}^{\mathcal{G}}(\bm{X},\bm{Y}).
\]
\end{lemma}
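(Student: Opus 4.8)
The backbone of the argument is that the canonical projection $\pi:\odn\to\mathcal{Q}$, equipped with the quotient metric $\langle\cdot,\cdot\rangle_{[\bG]}$ constructed above, is a Riemannian submersion: $\operatorname{D}\pi(\bar{\bG})$ annihilates $\mathcal{V}_{\bar{\bG}}$ and restricts to a linear isometry from $\mathcal{H}_{\bar{\bG}}$ onto $\operatorname{T}_{[\bG]}\mathcal{Q}$. Since $\odn$ is compact, hence complete, every piecewise-smooth curve in $\mathcal{Q}$ starting at $[\bm{X}]$ admits a global horizontal lift to $\odn$ starting at $\bm{X}$, of the same arc-length, whose endpoint lies in the fiber $\pi^{-1}([\bm{Y}])=\{\bm{Y}\bQ:\bQ\in\od\}$. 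The first step is to record from this the fiber-distance identity
\[
\operatorname{d}^{\mathcal{Q}}([\bm{X}],[\bm{Y}])=\min_{\bQ\in\od}\operatorname{d}^{\odn}(\bm{X},\bm{Y}\bQ).
\]
The inequality ``$\leq$'' comes from projecting any curve in $\odn$ joining $\bm{X}$ to $\bm{Y}\bQ$ down to $\mathcal{Q}$ and using that $\pi$ is arc-length nonincreasing (because $\|\operatorname{D}\pi(\bar{\bG})[\bm{v}]\|_{[\bG]}\leq\|\bm{v}\|_{\bar{\bG}}$, splitting $\bm{v}$ into its horizontal and vertical parts); ``$\geq$'' comes from lifting any curve in $\mathcal{Q}$ joining $[\bm{X}]$ to $[\bm{Y}]$ horizontally from $\bm{X}$, noting its length is unchanged and its endpoint equals $\bm{Y}\bQ$ for some $\bQ\in\od$, and taking the infimum.

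Granted the identity, the chain $\operatorname{d}_F\leq\operatorname{d}^{\mathcal{Q}}\leq\operatorname{d}^{\mathcal{G}}$ follows quickly. For the right inequality, the same projection argument applied to curves joining $\bm{X}$ to $\bm{Y}$ gives $\operatorname{d}^{\mathcal{Q}}([\bm{X}],[\bm{Y}])\leq\operatorname{d}^{\ogn}(\bm{X},\bm{Y})=\operatorname{d}^{\mathcal{G}}(\bm{X},\bm{Y})$, the last equality because a minimizing geodesic in $\odn$ between two points of a common connected component stays in that component, so the distance on $\ogn$ coincides there with that on $\odn$. For the left inequality, I would use that $\odn$ is an embedded submanifold of $\mathbb{R}^{nd\times d}$ carrying the induced Frobenius metric, so the Riemannian distance dominates the ambient chordal distance: any curve in $\odn$ has the same length measured in $\odn$ or in $\mathbb{R}^{nd\times d}$, which is at least the length of the straight segment, whence $\operatorname{d}^{\odn}(\bm{A},\bm{B})\geq\|\bm{A}-\bm{B}\|_F$; combining with the identity, $\operatorname{d}^{\mathcal{Q}}([\bm{X}],[\bm{Y}])=\min_{\bQ}\operatorname{d}^{\odn}(\bm{X},\bm{Y}\bQ)\geq\min_{\bQ}\|\bm{X}-\bm{Y}\bQ\|_F=\operatorname{d}_F([\bm{X}],[\bm{Y}])$.

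For the remaining two inequalities I would argue directly from the definitions. The bound $\operatorname{d}_\infty([\bm{X}],[\bm{Y}])\leq\operatorname{d}_F([\bm{X}],[\bm{Y}])$ is immediate: feeding the Frobenius-optimal $\bQ^\star$ into the minimization defining $\operatorname{d}_\infty$ gives $\operatorname{d}_\infty([\bm{X}],[\bm{Y}])\leq\max_i\|\bm{X}_i-\bm{Y}_i\bQ^\star\|_F\leq(\sum_i\|\bm{X}_i-\bm{Y}_i\bQ^\star\|_F^2)^{1/2}=\|\bm{X}-\bm{Y}\bQ^\star\|_F=\operatorname{d}_F([\bm{X}],[\bm{Y}])$. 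For $\operatorname{d}^{\mathcal{Q}}_\infty([\bm{X}],[\bm{Y}])\leq\operatorname{d}^{\mathcal{Q}}([\bm{X}],[\bm{Y}])$ I would use the product structure of the Riemannian distance on $\odn$, namely $\operatorname{d}^{\odn}(\bm{A},\bm{B})^2=\sum_{i=1}^n\operatorname{d}^{\od}(\bm{A}_i,\bm{B}_i)^2\geq\max_i\operatorname{d}^{\od}(\bm{A}_i,\bm{B}_i)^2$, apply it with $\bm{B}=\bm{Y}\bQ$ so that $\bm{B}_i=\bm{Y}_i\bQ$, and then minimize over $\bQ\in\od$ on both sides, recognizing the left side as $\operatorname{d}^{\mathcal{Q}}([\bm{X}],[\bm{Y}])$ (by the identity) and the right side as $\operatorname{d}^{\mathcal{Q}}_\infty([\bm{X}],[\bm{Y}])$.

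Most of the ingredients here — arc-length nonincrease under a Riemannian submersion, Riemannian-dominates-chordal on an embedded submanifold, the product-distance formula, and $\ell^\infty\leq\ell^2$ — are standard and short. The one genuinely delicate point is the ``$\geq$'' half of the fiber-distance identity: it rests on the global existence of horizontal lifts of arbitrary piecewise-smooth curves (not merely geodesics), which is guaranteed because $\odn$ is complete (equivalently, because $\pi$ is a principal $\od$-bundle, $\od$ being compact and acting freely on $\odn$), together with the fact that horizontal lift preserves arc-length, which is immediate from $\operatorname{D}\pi$ being an isometry on horizontal spaces. If one prefers to avoid lifting arbitrary curves, an alternative is to invoke completeness of $\mathcal{Q}$, take a minimizing geodesic downstairs (which exists since $\mathcal{Q}$ is compact), and lift only that geodesic, using the classical fact that horizontal lifts of geodesics are geodesics.
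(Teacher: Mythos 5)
Your proof is correct and follows essentially the same route as the paper: the key identity $\operatorname{d}^{\mathcal{Q}}([\bm{X}],[\bm{Y}])=\min_{\bQ\in\od}\operatorname{d}^{\odn}(\bm{X},\bm{Y}\bQ)$, which you derive from scratch via arc-length nonincrease under the submersion and length-preserving horizontal lifts, is exactly what the paper cites as \cite[Exercise 10.15]{boumal2023introduction}, and the remaining links ($\operatorname{d}_F\leq\operatorname{d}^{\mathcal{Q}}$ via intrinsic-dominates-chordal on an embedded submanifold, and the two $\ell^\infty\leq\ell^2$-type bounds from the definitions) match the paper's argument. The only difference is that you expand that cited exercise into a self-contained submersion argument, which is a valid (and more detailed) substitute.
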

\begin{proof}
First, it can be directly verified from the definition that $\operatorname{d}_{\infty}^{\mathcal{Q}}([\bm{X}],[\bm{Y}])\leq \operatorname{d}^{\mathcal{Q}}([\bm{X}],[\bm{Y}])$ and $\operatorname{d}_{\infty}([\bm{X}], [\bm{Y}])\leq\operatorname{d}_F([\bm{X}],[\bm{Y}])$.
Let $\tilde{\bm{X}}\in\ogn$ be such that 
\[
\operatorname{d}^{\mathcal{G}}(\tilde{\bm{X}}, \bm{Y})=\inf_{\bm{X}^{\prime} \sim \bm{X}} \operatorname{d}^{\mathcal{G}}(\bm{X}^{\prime}, \bm{Y}).
\]
Then from \cite[Exercise 10.15]{boumal2023introduction} we know that $\operatorname{d}^{\mathcal{\bG}}(\tilde{\bm{X}}, \bm{Y})=\operatorname{d}^{\mathcal{Q}}([\bm{X}], [\bm{Y}])\leq\operatorname{d}^{\mathcal{G}}(\bm{X}, \bm{Y})$. On the other hand, it follows that 
\begin{align*}
\operatorname{d}_{F}([\bm{X}],[\bm{Y}]) 
&\leq \|\tilde{\bm{X}}-\bm{Y}\|_F
\leq\operatorname{d}^{\mathcal{G}}(\tilde{\bm{X}}, \bm{Y}),
\end{align*}
where the second inequality is from the fact that the intrinsic distance is larger than the extrinsic one for embedded Riemannian submanifold. 
The proof is complete.
\end{proof}

\subsection{Quotient Riemannian Gradient and Hessian}
In order to study the landscape and design algorithms for \eqref{Problem0} and \eqref{Problem}, we give explicit formulas of Riemannian gradient and Riemannian Hessian of the cost function $f$ in this section. Define the ancillary function $\tilde{f}: \mathbb{R}^{nd \times d} \rightarrow \mathbb{R}$ by
$$
\tilde{f}(\bm{X}):= \operatorname{tr}(\bm{X}^\top \bC \bm{X})\quad \text{for each}\ \bm{X} \in \mathbb{R}^{nd \times d}.
$$
Then $\bar{f}$ defined in \eqref{Problem0} is the restriction of $\tilde{f}$ onto $\odn$, i.e., $\bar{f}=\tilde{f}|_{\odn}$. Define a linear operator symblockdiag: $\mathbb{R}^{nd\times nd}\rightarrow \operatorname{Skew}(nd)$ by
\[ 
  \operatorname{symblockdiag}(\bm{X})_{ij} := \left\{
  \begin{array}{c@{\quad}l}
  \displaystyle \frac{\bm{X}_{ii}+\bm{X}_{ii}^\top}{2}, & \textrm{if } i=j, \\
  \noalign{\smallskip}
  \bm{0}, & \textrm{otherwise},
  \end{array}
  \right. \quad i, j \in [n].
\]
Let $\bm{X}\in\mathbb{R}^{nd\times d}$ and denote
\[
S(\bm{X}):=\operatorname{symblockdiag}(\bC \bm{X} \bm{X}^\top)-\bC\in\mathbb{R}^{nd\times nd}.
\]
Then we have the following calculation result about the (quotient) Riemannian gradient.

\begin{lemma}\label{grad-cal-lemma}
Let $[\bG] \in \mathcal{Q}$ and $\bar{\bG} \in \pi^{-1}([\bG])$. Then the unique horizontal lift of the Riemannian gradient $\grad f([\bG])$ of $f$ at $\bar{\bG} \in \odn $  is given by
\begin{align}\label{grad-cal}
\overline{\operatorname{grad} f([\bG])}_{\bar{\bG}}=\grad \bar{f}(\bar{\bG})=-2S(\bar{\bG})\bar{\bG}.
\end{align}
\end{lemma}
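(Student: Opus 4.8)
The plan is to compute the Riemannian gradient of $\bar f$ on $\odn$ directly and then argue that it automatically lies in the horizontal space, so it serves as the horizontal lift of $\grad f([\bG])$.

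First I would recall that $\bar f = \tilde f|_{\odn}$ with $\tilde f(\bm X) = \operatorname{tr}(\bm X^\top \bC \bm X)$, whose Euclidean gradient is $\nabla \tilde f(\bm X) = 2\bC \bm X$. Since $\odn$ is an embedded Riemannian submanifold of $\mathbb R^{nd\times d}$ with the induced metric, the Riemannian gradient of $\bar f$ at $\bar{\bG}$ is the orthogonal projection of $\nabla\tilde f(\bar{\bG}) = 2\bC\bar{\bG}$ onto $\operatorname{T}_{\bar{\bG}}\odn$. I would then use the known block formula for the projection onto $\operatorname{T}_{\bar{\bG}}\odn$: writing $\bm Z = 2\bC\bar{\bG}$, block $i$ is sent to $\bar{\bG}_i \cdot \tfrac12(\bar{\bG}_i^\top \bm Z_i - \bm Z_i^\top \bar{\bG}_i)$, i.e.\ $\bm Z_i - \bar{\bG}_i \cdot \tfrac12(\bar{\bG}_i^\top \bm Z_i + \bm Z_i^\top \bar{\bG}_i)$. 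Assembling the blocks and recognizing $\tfrac12(\bar{\bG}_i^\top\bm Z_i + \bm Z_i^\top\bar{\bG}_i)$ as the $(i,i)$ block of $\operatorname{symblockdiag}(\bm Z\bar{\bG}^\top)$ after a short manipulation, I get $\grad\bar f(\bar{\bG}) = 2\bC\bar{\bG} - 2\operatorname{symblockdiag}(\bC\bar{\bG}\bar{\bG}^\top)\bar{\bG} = -2S(\bar{\bG})\bar{\bG}$, which is the claimed closed form. A small bookkeeping check is needed to align the $2$'s and confirm $\operatorname{symblockdiag}(\bC\bar{\bG}\bar{\bG}^\top)$ picks out exactly the symmetrized diagonal blocks that match $\tfrac12(\bar{\bG}_i^\top \bm Z_i + \bm Z_i^\top\bar{\bG}_i)$ with $\bm Z = 2\bC\bar{\bG}$.

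Next I would verify the horizontal-lift claim. By the general theory (e.g.\ \cite[Section 3.6.2]{absil2009optimization} or \cite[Proposition 9.38]{boumal2023introduction}), the horizontal lift of $\grad f([\bG])$ at $\bar{\bG}$ equals $\operatorname{Proj}_{\mathcal H_{\bar{\bG}}}(\grad\bar f(\bar{\bG}))$ whenever $\bar f$ is invariant along the fibers (which holds here since $\bar f(\bG\bQ)=\bar f(\bG)$). So it remains to show $\grad\bar f(\bar{\bG}) = -2S(\bar{\bG})\bar{\bG}$ already belongs to $\mathcal H_{\bar{\bG}}$, i.e.\ that the projection acts trivially. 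Using the previous lemma, $\operatorname{Proj}_{\mathcal H_{\bar{\bG}}}(\bm\eta) = (\bm I_{nd} - \tfrac1n\bar{\bG}\bar{\bG}^\top)\bm\eta$, so I need $\bar{\bG}^\top S(\bar{\bG})\bar{\bG} = \bm 0$. Since $S(\bar{\bG}) = \operatorname{symblockdiag}(\bC\bar{\bG}\bar{\bG}^\top) - \bC$, I would compute $\bar{\bG}^\top S(\bar{\bG})\bar{\bG} = \sum_i \bar{\bG}_i^\top \bigl(\tfrac12(\bar{\bG}_i\bar{\bG}_i^\top(\bC\bar{\bG}\bar{\bG}^\top)_{ii}? \ldots) \bigr)$ — more cleanly: $\bar{\bG}^\top \operatorname{symblockdiag}(\bC\bar{\bG}\bar{\bG}^\top)\bar{\bG} = \sum_i \bar{\bG}_i^\top \cdot \tfrac12\bigl((\bC\bar{\bG}\bar{\bG}^\top)_{ii} + (\bC\bar{\bG}\bar{\bG}^\top)_{ii}^\top\bigr)\cdot \bar{\bG}_i$ and $\bar{\bG}^\top\bC\bar{\bG} = \sum_{i,j}\bar{\bG}_i^\top \bC_{ij}\bar{\bG}_j$; a direct check (using $\bar{\bG}_i^\top\bar{\bG}_i = \bm I_d$, so $(\bC\bar{\bG}\bar{\bG}^\top)_{ii}\bar{\bG}_i = \sum_j \bC_{ij}\bar{\bG}_j$) shows the symmetric combination returns exactly $\bar{\bG}^\top\bC\bar{\bG}$, hence the difference vanishes. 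Alternatively, since $\grad\bar f(\bar{\bG})\in\operatorname{T}_{\bar{\bG}}\odn$ has blocks $\bar{\bG}_i\bm E_i$ with $\bm E_i$ skew, it suffices to check $\sum_i \bm E_i = \bm 0$, which is precisely the statement $\operatorname{symblockdiag}$-part cancels against the off-diagonal contribution — this is the same computation phrased via the characterization of $\mathcal H_{\bar{\bG}}$ in the previous lemma.

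The main obstacle is the index-juggling in the second step: showing that the ``diagonal symmetrization'' operator $\operatorname{symblockdiag}(\bC\bar{\bG}\bar{\bG}^\top)$ interacts with right-multiplication by $\bar{\bG}$ in exactly the way needed so that $\bar{\bG}^\top S(\bar{\bG})\bar{\bG} = \bm 0$ (equivalently, that the skew blocks $\bm E_i$ of $\grad\bar f(\bar{\bG})$ sum to zero). This is a routine but slightly fiddly block-matrix calculation relying crucially on $\bar{\bG}_i^\top\bar{\bG}_i = \bm I_d$; everything else (Euclidean gradient, submanifold projection formula, the invariance-implies-horizontal-lift principle) is standard and quotable from the preliminaries. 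I would present the computation compactly, emphasizing the identity $(\bC\bar{\bG}\bar{\bG}^\top)_{ii}\,\bar{\bG}_i = \sum_{j}\bC_{ij}\bar{\bG}_j$ as the key algebraic step.
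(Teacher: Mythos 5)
Your proposal is correct and follows essentially the same route as the paper: compute the Euclidean gradient $2\bC\bar{\bG}$, project onto $\operatorname{T}_{\bar{\bG}}\odn$ to get $-2S(\bar{\bG})\bar{\bG}$, and then observe that applying $\operatorname{Proj}_{\mathcal H_{\bar{\bG}}} = \bm I_{nd}-\tfrac1n\bar{\bG}\bar{\bG}^\top$ leaves it unchanged. The only difference is that you spell out the horizontality check $\bar{\bG}^\top S(\bar{\bG})\bar{\bG}=\bm 0$ (correctly, via $(\bC\bar{\bG}\bar{\bG}^\top)_{ii}\bar{\bG}_i=\sum_j\bC_{ij}\bar{\bG}_j$), whereas the paper asserts the equality in the last display and supplies the invariance-based justification only afterwards in Remark~\ref{grad-equal}.
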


\begin{proof}
By simple calculation, the gradient of $\tilde{f}$ at $\bm{X} \in \mathbb{R}^{nd \times d}$ is given by
$\grad \tilde{f}(\bm{X})=2\bC\bm{X}$. Since $\odn$ is a Riemannian submanifold of $\mathbb{R}^{nd \times d}$ with Riemannian metric induced from the Euclidean inner product, one has the Riemannian gradient of $\bar{f}$ at $\bG\in \odn$ that
\[
\grad \bar{f}(\bG)=\operatorname{Proj}_{\operatorname{T}_{\bG}\odn}(\grad \tilde{f}(\bG))=2(\bC-\operatorname{symblockdiag}(\bC \bG \bG^\top )) \bG=-2S(\bG)\bG,
\]
where $\operatorname{Proj}_{\operatorname{T}_{\bG}\odn}$ indicates the orthogonal projection onto $\operatorname{T}_{\bG}\odn$, which is given by
\begin{equation*}
\operatorname{Proj}_{\operatorname{T}_{\bG}\odn}(\bm{X})=\bm{X}-\operatorname{symblockdiag}(\bm{X}\bG^\top)\bG, \quad \text{for each}\ \bm{X} \in \mathbb{R}^{nd \times d}.
\end{equation*}
Therefore we know that
\[
\overline{\operatorname{grad} f([\bG])}_{\bar{\bG}}=\operatorname{Proj}_{\mathcal{H}_{\bar{\bG}}}(\operatorname{grad} \bar{f}(\bar{\bG}))=\left(\bm{I}_{nd}-\frac{1}{n} \bar{\bG}\bar{\bG}^\top  \right) (-2S(\bar{\bG})\bar{\bG})=-2S(\bar{\bG})\bar{\bG}.
\]
The proof is complete.
\end{proof}

\begin{remark}\label{grad-equal}
From Lemma \ref{grad-cal-lemma} we observe that the Riemannian gradient method on the quotient manifold $\mathcal{Q}$ coincides with the one on the original manifold $\odn$, i.e.,
\begin{center}
$\overline{\operatorname{grad} f([\bG])}_{\bar{\bG}}=\grad \bar{f}(\bar{\bG})$ at any $\bar{\bG}\in \pi^{-1}([\bG])$. 
\end{center}
Since the function $\bar{f}$ is invariant on the equivalence classes $\bar{\bG} \in \pi^{-1}([\bG])$, we know that  
\begin{center}
$\operatorname{D} \bar{f}(\bar{\bG})\bar{\bm{\xi}}_{\bar{\bG}}=\langle \grad \bar{f}(\bar{\bG}),\bar{\bm{\xi}}_{\bar{\bG}}\rangle_{\bar{\bG}}=0$ for any $\bar{\bm{\xi}}_{\bar{\bG}}\in \mathcal{V}_{\bar{\bG}}=\operatorname{T}_{\bar{\bG}}(\pi^{-1}([\bG]))$. 
\end{center}
Therefore, $\operatorname{grad} \bar{f}(\bar{\bG}) \in\left(\mathcal{V}_{\bar{\bG}}\right)^{\perp}=\mathcal{H}_{\bar{\bG}}$ is exactly the horizontal lift of $\operatorname{grad} f([\bG])$ at $\bar{\bG}$. This helps us design Riemannian algorithms on the quotient manifold;  
see section \ref{section-rie-algos} for details.
\end{remark}

The following lemma shows the explicit form of quotient Riemannian Hessian vector.
\begin{lemma}
Let $[\bG] \in \mathcal{Q}$ and $\bar{\bG} \in \pi^{-1}([\bG])$. Then the unique horizontal lift of the Riemannian Hessian $\Hess f([\bG])$ of $f$ with direction $\bm{H}_{[\bG]}$ at $\bar{\bG} \in \odn $ is given by
\begin{align}\label{Hess-cal}
\overline{\operatorname{Hess} f([\bG])\left[\bm{H}_{[\bG]}\right]}_{\bar{\bG}}
&=\left(\bm{I}_{nd}-\frac{1}{n} \bar{\bG}\bar{\bG}^\top  \right)\left(\operatorname{Proj}_{\operatorname{T}_{\bar{\bG}}\odn }(-2S(\bar{\bG})\bar{\bm{H}}_{\bar{\bG}}) \right).
\end{align}
\end{lemma}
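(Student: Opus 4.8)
The plan is to compute the Riemannian Hessian of $f$ on the quotient manifold $\mathcal{Q}$ by working through its horizontal lift on the total space $\odn$, using the standard machinery for Riemannian submersions. Recall from \cite[Section 3.6]{absil2009optimization} or \cite[Section 9.8 and Proposition 9.45]{boumal2023introduction} that for a Riemannian submersion $\pi:\odn\to\mathcal{Q}$, the horizontal lift of $\Hess f([\bG])[\bm{H}_{[\bG]}]$ at $\bar{\bG}$ equals $\operatorname{Proj}_{\mathcal{H}_{\bar{\bG}}}\!\left(\nabla^{\odn}_{\bar{\bm{H}}_{\bar{\bG}}}\,\overline{\grad f([\bG])}\right)$, where $\nabla^{\odn}$ is the Levi-Civita connection of $\odn$ and $\bar{\bm{H}}_{\bar{\bG}}\in\mathcal{H}_{\bar{\bG}}$ is the horizontal lift of $\bm{H}_{[\bG]}$. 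By Lemma \ref{grad-cal-lemma} the horizontal lift of the quotient gradient is exactly $\grad\bar{f}(\bar{\bG})=-2S(\bar{\bG})\bar{\bG}$, so the right-hand side becomes $\operatorname{Proj}_{\mathcal{H}_{\bar{\bG}}}\!\left(\nabla^{\odn}_{\bar{\bm{H}}_{\bar{\bG}}}\grad\bar{f}\right)=\operatorname{Proj}_{\mathcal{H}_{\bar{\bG}}}\!\left(\Hess\bar{f}(\bar{\bG})[\bar{\bm{H}}_{\bar{\bG}}]\right)$, using the definition of the Riemannian Hessian on the embedded submanifold $\odn$. So the task reduces to (i) recalling the formula for $\Hess\bar{f}$ on $\odn$, and (ii) simplifying the composition with $\operatorname{Proj}_{\mathcal{H}_{\bar{\bG}}}(\cdot)=(\bm{I}_{nd}-\frac1n\bar{\bG}\bar{\bG}^\top)(\cdot)$ from the earlier lemma.

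First I would write down the Riemannian Hessian of $\bar{f}=\tilde{f}|_{\odn}$ at $\bar{\bG}$ applied to a tangent vector $\bar{\bm{H}}\in\operatorname{T}_{\bar{\bG}}\odn$. Since $\odn$ is a Riemannian submanifold of $\mathbb{R}^{nd\times d}$, the standard formula (e.g.\ \cite[Section 5.3]{absil2009optimization}, \cite[Corollary 5.16]{boumal2023introduction}) gives $\Hess\bar{f}(\bar{\bG})[\bar{\bm{H}}]=\operatorname{Proj}_{\operatorname{T}_{\bar{\bG}}\odn}\!\left(\operatorname{D}(\bG\mapsto\grad\bar{f}(\bG))(\bar{\bG})[\bar{\bm{H}}]\right)$, and differentiating the Euclidean-gradient expression $\grad\tilde f(\bm X)=2\bC\bm X$ together with the projection term yields, after collecting the purely ambient-derivative part, the quantity $\operatorname{Proj}_{\operatorname{T}_{\bar{\bG}}\odn}(-2S(\bar{\bG})\bar{\bm{H}})$ modulo terms that are already tangent; this matches the inner factor appearing in \eqref{Hess-cal}. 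Then I would precompose with $\operatorname{Proj}_{\mathcal{H}_{\bar{\bG}}}$, which is exactly the outer factor $(\bm{I}_{nd}-\frac1n\bar{\bG}\bar{\bG}^\top)$ from the vertical/horizontal splitting lemma. The only genuine subtlety is verifying that the ``curvature correction'' (the second fundamental form / O'Neill tensor terms) either vanishes or gets absorbed: on $\odn$ with the bi-invariant metric the Levi-Civita connection has a clean Lie-theoretic form, and because $\grad\bar{f}$ is horizontal (Remark \ref{grad-equal}) and we project onto $\mathcal{H}_{\bar{\bG}}$ at the end, the vertical components of the connection term drop out.

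I expect the main obstacle to be the bookkeeping in step (ii): one must be careful that the horizontal lift $\bar{\bm{H}}_{\bar{\bG}}$ of $\bm{H}_{[\bG]}$ is used (not an arbitrary tangent representative), and that when differentiating $\grad\bar f$ along $\bar{\bm{H}}_{\bar{\bG}}$ the extra contribution coming from the symblockdiag projection term (namely the derivative of $\operatorname{symblockdiag}(\bC\bG\bG^\top)\bG$) produces a vertical-space component that is annihilated by $\operatorname{Proj}_{\mathcal{H}_{\bar{\bG}}}$, so that only $\operatorname{Proj}_{\operatorname{T}_{\bar{\bG}}\odn}(-2S(\bar{\bG})\bar{\bm{H}}_{\bar{\bG}})$ survives inside. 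A secondary point worth checking is well-definedness: the expression on the right-hand side of \eqref{Hess-cal} must be independent of the choice of representative $\bar{\bG}\in\pi^{-1}([\bG])$, which follows from the $\od$-equivariance of $S(\cdot)$, of the tangent-space and horizontal-space projections, and of the map $\bar{\bm{H}}\mapsto\bar{\bm{H}}_{\bar{\bG}}$ under the right action $\bar{\bG}\mapsto\bar{\bG}\bQ$. Once these equivariance facts are in hand, the identity \eqref{Hess-cal} follows directly from the submersion Hessian formula, and the proof is essentially a matter of assembling the pieces already established in the preceding lemmas.
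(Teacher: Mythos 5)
Your proposal is correct and follows essentially the same route as the paper: lift the quotient Hessian via the submersion formula $\operatorname{Proj}_{\mathcal{H}_{\bar{\bG}}}\circ\operatorname{Proj}_{\operatorname{T}_{\bar{\bG}}\odn}$ applied to the ambient directional derivative of $\grad \bar{f}(\bar{\bG})=-2S(\bar{\bG})\bar{\bG}$, and observe that the derivative of the $\operatorname{symblockdiag}$ part drops out. One small correction: that extra term $\operatorname{symblockdiag}(\bC\bar{\bm{H}}_{\bar{\bG}}\bar{\bG}^\top+\bC\bar{\bG}\bar{\bm{H}}_{\bar{\bG}}^\top)\bar{\bG}$ has blocks of the form $\bar{\bG}_i$ times a symmetric matrix, so it lies in the normal space of $\operatorname{T}_{\bar{\bG}}\odn$ and is annihilated by the tangent-space projection, not (as you claim) a vertical vector killed by $\operatorname{Proj}_{\mathcal{H}_{\bar{\bG}}}$ (the vertical space consists of vectors $\bar{\bG}\bm{E}$ with a single skew $\bm{E}$, which this term is not); since you apply both projections in the correct order anyway, this misattribution does not affect the conclusion.
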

\begin{proof}
Recall that $\nabla$ and $\overline{\nabla}$ are the Riemannian connections on $\mathcal{Q}$ and $\odn$, respectively. The Riemannian Hessian of $f$ at $[\bG] \in \mathcal{Q}$ is given by
\[
\Hess f([\bG])\left[\bm{H}_{[\bG]}\right]=\nabla_{\bm{H}_{[\bG]}} \grad f([\bG]), \quad \text{for each}\ \bm{H}_{[\bG]} \in \operatorname{T}_{[\bG]} \mathcal{Q}.
\]
Thus, from \cite[(5.15) and Proposition 5.3.3]{absil2009optimization}, it follows that
\begin{equation}\label{Hess-def}
\begin{aligned}
\overline{\Hess f([\bG])\left[\bm{H}_{[\bG]}\right]}_{\bar{\bG}} &=\overline{\nabla_{\bm{H}_{[\bG]}} \grad f([\bG])}_{\bar{\bG}}=\operatorname{Proj}_{\mathcal{H}_{\bar{\bG}}}\left(\overline{\nabla}_{\bar{\bm{H}}_{\bar{\bG}}} \overline{\grad f([\bG])}_{\bar{\bG}}\right) \\
&=\operatorname{Proj}_{\mathcal{H}_{\bar{\bG}}}\left(\operatorname{Proj}_{\operatorname{T}_{\bar{\bG}}\odn}\left(\operatorname{D} \operatorname{grad} \bar{f}(\bar{\bG})\left[\bar{\bm{H}}_{\bar{\bG}}\right]\right)\right),
\end{aligned}
\end{equation}
where $\operatorname{D} \operatorname{grad} \bar{f}(\bar{\bG})\left[\bm{H}_{\bar{\bG}}\right]$ stands for the classical directional derivative. Since
\[
\begin{aligned}
  \operatorname{D}\operatorname{grad} \bar{f}(\bar{\bG})\left[\bar{\bm{H}}_{\bar{\bG}}\right]
&= \lim_{t\rightarrow 0}\frac{2(S(\bar{\bG})\bar{\bG}-S(\bar{\bG}+t\bar{\bm{H}}_{\bar{\bG}})(\bar{\bG}+t\bar{\bm{H}}_{\bar{\bG}}))}{t}\\
&=-2\left(S(\bar{\bG})\bar{\bm{H}}_{\bar{\bG}}+\operatorname{symblockdiag}(\bC \bar{\bm{H}}_{\bar{\bG}} \bar{\bG}^\top+\bC\bar{\bG} \bar{\bm{H}}_{\bar{\bG}}^\top )\bar{\bG}\right)\notag
\end{aligned}
\]
and $\operatorname{Proj}_{\operatorname{T}_{\bar{\bG}}\odn }\left(\operatorname{symblockdiag}(\bC \bar{\bm{H}}_{\bar{\bG}} \bar{\bG}^\top+\bC\bar{\bG} \bar{\bm{H}}_{\bar{\bG}}^\top )\bar{\bG}\right)=\bm{0}$,
it follows from \eqref{Hess-def} that
\begin{align*}
\overline{\operatorname{Hess} f([\bG])\left[\bm{H}_{[\bG]}\right]}_{\bar{\bG}}
&= \operatorname{Proj}_{\mathcal{H}_{\bar{\bG}}}\left(\operatorname{Proj}_{\operatorname{T}_{\bar{\bG}}\odn}\left(-2S(\bar{\bG})\bar{\bm{H}}_{\bar{\bG}}\right) \right)\notag\\
&=\left(\bm{I}_{nd}-\frac{1}{n} \bar{\bG}\bar{\bG}^\top  \right)\left(\operatorname{Proj}_{\operatorname{T}_{\bar{\bG}}\odn }\left(-2S(\bar{\bG})\bar{\bm{H}}_{\bar{\bG}}\right) \right).
\end{align*}
The proof is complete.
\end{proof}

\section{Improved Estimation: from Average to Worst-Case}
\label{section-improvedest}

In this section, we investigate the deterministic estimation performance of the estimator \eqref{Problem0} (also \eqref{Problem}) for the ground truth $\bG^\star\in\ogn$ from the existing $\ell_2$ distance to $\ell_\infty$ distance. The derived $\ell_\infty$ distance bound is similar to the previous one under $\ell_2$ distance when it transferred to $\ell_2$ distance under certain statistical model. However, it controls elementwise estimation error which improves the result from the average to worst-case scenario.  

Before presenting the main results, 
we first introduce the following $\ell_2$ estimation error which is similar to the result in \cite[Lemma 4.1 and 4.2]{zhu2021orthogonal}.



\begin{lemma}[$\ell_{2}$-Estimation Error]\label{dist-GstarGhat}
   Let $\bG \in \mathbb{R}^{n d \times d}$ be such that $\bG^{\top} \bG=n \bm{I}_{d}$ and $\operatorname{tr}(\bG^{\top} \bC \bG) \geq$ $\operatorname{tr}(\bG^{\star \top} \bC \bG^{\star})$. 
   Then it follows that  
  \begin{equation}\label{dist-GstarGhat-ineq}
  \operatorname{d}_F([\bG],[\bG^\star])
  \leq   \frac{4\sqrt{d}\|\bm{\Delta}\|}{\sqrt{n}}.
  \end{equation}
  Moreover, all singular values of $\bG^{\star\top} \bG$ satisfy
  \begin{align} \label{eq: Gstarhatsingularvalue}
    n-\frac{8d\|\bm{\Delta}\|^2}{n} \leq \sigma_{l}(\bG^{\star\top} \bG) \leq n, \quad \text{for each}\ l\in[d],
  \end{align}
and the smallest singular value of $\bC_{i,:}\bG$ for $i\in[n]$ satisfies
\begin{align}\label{singular-lowbd}
\sigma_{\min }(\bC_{i,:}\bG)\ge n-\frac{8d\|\bm{\Delta}\|^2}{n}-\|\bm{\Delta} \bG\|_{\infty}.
\end{align}
  \end{lemma}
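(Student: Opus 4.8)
The plan is to prove the three estimates in sequence, since each feeds into the next.

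\textbf{Step 1: The $\ell_2$ bound \eqref{dist-GstarGhat-ineq}.} I would start from the optimality hypothesis $\operatorname{tr}(\bG^\top \bC \bG) \geq \operatorname{tr}(\bG^{\star\top}\bC\bG^\star)$ and substitute $\bC = \bG^\star\bG^{\star\top} + \bm{\Delta}$. Writing $\bG^\star\bG^{\star\top}$ as the ``signal'' part, the signal contribution to the trace is $\|\bG^{\star\top}\bG\|_F^2$ for $\bG$ and $\|\bG^{\star\top}\bG^\star\|_F^2 = nd \cdot n = n^2 d$ for $\bG^\star$ (using $\bG^{\star\top}\bG^\star = n\bm{I}_d$). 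Rearranging the optimality inequality gives
\[
n^2 d - \|\bG^{\star\top}\bG\|_F^2 \leq \langle \bm{\Delta}, \bG\bG^\top - \bG^\star\bG^{\star\top}\rangle.
\]
The left side I would lower-bound by relating it to $\operatorname{d}_F([\bG],[\bG^\star])^2 = 2(nd - \|\bG^{\star\top}\bG\|_*)$ (the displayed identity just before Lemma~\ref{lemma-dist-com}); since all singular values of $\bG^{\star\top}\bG$ lie in $[0,n]$ one gets $n^2 d - \|\bG^{\star\top}\bG\|_F^2 \geq n(nd - \|\bG^{\star\top}\bG\|_*) = \tfrac{n}{2}\operatorname{d}_F([\bG],[\bG^\star])^2$. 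For the right side, let $\bQ^\star$ be the optimal orthogonal alignment, so $\operatorname{d}_F([\bG],[\bG^\star]) = \|\bG - \bG^\star\bQ^\star\|_F$; a standard expansion of $\langle\bm{\Delta}, \bG\bG^\top - \bG^\star\bG^{\star\top}\rangle$ together with $\bm{\Delta}$ symmetric and $\|\bG\|, \|\bG^\star\| \leq \sqrt{n}$, $\operatorname{rank}$ at most $d$, yields a bound of the form $2\sqrt{d}\,\|\bm{\Delta}\|\cdot\sqrt{n}\cdot \operatorname{d}_F([\bG],[\bG^\star])$. Combining and dividing through by $\operatorname{d}_F([\bG],[\bG^\star])$ gives \eqref{dist-GstarGhat-ineq}. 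This mirrors \cite[Lemma 4.1, 4.2]{zhu2021orthogonal}, so I would follow that structure closely.

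\textbf{Step 2: Singular value bounds \eqref{eq: Gstarhatsingularvalue}.} The upper bound $\sigma_l(\bG^{\star\top}\bG)\leq n$ is immediate from $\|\bG^\star\| = \|\bG\| = \sqrt{n}$. For the lower bound, I would use $\operatorname{d}_F([\bG],[\bG^\star])^2 = 2\sum_l(n - \sigma_l(\bG^{\star\top}\bG))$ combined with $0 \leq \sigma_l \leq n$, so that each individual term satisfies $n - \sigma_l(\bG^{\star\top}\bG) \leq \tfrac12\operatorname{d}_F([\bG],[\bG^\star])^2 \leq \tfrac12 \cdot \tfrac{16d\|\bm{\Delta}\|^2}{n} = \tfrac{8d\|\bm{\Delta}\|^2}{n}$, which is exactly \eqref{eq: Gstarhatsingularvalue}.

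\textbf{Step 3: Smallest singular value of $\bC_{i,:}\bG$ \eqref{singular-lowbd}.} Here I would write $\bC_{i,:}\bG = (\bG^\star\bG^{\star\top})_{i,:}\bG + \bm{\Delta}_{i,:}\bG = \bG^\star_i (\bG^{\star\top}\bG) + (\bm{\Delta}\bG)_i$. Since $\bG^\star_i$ is orthogonal, $\sigma_{\min}(\bG^\star_i(\bG^{\star\top}\bG)) = \sigma_{\min}(\bG^{\star\top}\bG) \geq n - \tfrac{8d\|\bm{\Delta}\|^2}{n}$ by Step 2. Then Weyl's inequality for singular values gives $\sigma_{\min}(\bC_{i,:}\bG) \geq \sigma_{\min}(\bG^\star_i\bG^{\star\top}\bG) - \|(\bm{\Delta}\bG)_i\|_F \geq n - \tfrac{8d\|\bm{\Delta}\|^2}{n} - \|\bm{\Delta}\bG\|_\infty$, using $\|(\bm{\Delta}\bG)_i\|_F \leq \max_i\|(\bm{\Delta}\bG)_i\|_F = \|\bm{\Delta}\bG\|_\infty$.

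\textbf{Main obstacle.} The delicate part is Step~1, specifically getting the correct constant $4\sqrt{d}$ in the cross-term bound $\langle\bm{\Delta}, \bG\bG^\top - \bG^\star\bG^{\star\top}\rangle$. One must carefully factor $\bG\bG^\top - \bG^\star\bG^{\star\top}$ in a way that exposes a factor of $\|\bG - \bG^\star\bQ^\star\|_F$ while keeping the matrix whose operator norm multiplies $\|\bm{\Delta}\|$ of rank $O(d)$ (so that $\langle\bm{\Delta}, \cdot\rangle$ can be bounded by $\|\bm{\Delta}\|$ times nuclear norm, not Frobenius norm of $\bm{\Delta}$). The standard trick is to write $\bG\bG^\top - \bG^\star\bG^{\star\top} = (\bG - \bG^\star\bQ^\star)\bG^\top + \bG^\star\bQ^\star(\bG - \bG^\star\bQ^\star)^\top$ (using $\bQ^\star\bQ^{\star\top} = \bm{I}_d$), bound each summand's nuclear norm by $\sqrt{d}\,\|\bG-\bG^\star\bQ^\star\|_F \cdot \|\bG\|$ respectively $\sqrt{d}\,\|\bG^\star\|\cdot\|\bG-\bG^\star\bQ^\star\|_F$, and use $\|\bG\|,\|\bG^\star\| \leq \sqrt{n}$; this produces the factor $2\sqrt{d}\sqrt{n}$ that, after dividing by $\tfrac{n}{2}$ from the left-hand side, yields $\tfrac{4\sqrt d\|\bm{\Delta}\|}{\sqrt n}$. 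I would need to double-check whether the hypothesis $\bG^\top\bG = n\bm{I}_d$ (rather than $\bG\in\odn$) changes anything — it does not for these arguments, since only $\|\bG\|=\sqrt n$ and the rank bound are used.
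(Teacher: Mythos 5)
Your proposal is correct and follows essentially the same route as the paper: the same optimality-inequality rearrangement, the same passage from $n^2d-\operatorname{tr}(\bG^\top\bG^\star\bG^{\star\top}\bG)$ to $\tfrac{n}{2}\operatorname{d}_F([\bG],[\bG^\star])^2$ via $\sigma_l\le n$, and identical arguments for \eqref{eq: Gstarhatsingularvalue} and \eqref{singular-lowbd}. The only cosmetic difference is in the cross term: the paper writes it as $\operatorname{tr}((\bG-\bG^\star)^\top\bm{\Delta}(\bG+\bG^\star))\le\|\bm{\Delta}\|\,\|\bG-\bG^\star\|_F\,\|\bG+\bG^\star\|_F$ after the WLOG alignment, whereas you factor $\bG\bG^\top-\bG^\star\bG^{\star\top}$ asymmetrically and use a rank-$d$ nuclear-norm bound — both give the same constant $4\sqrt{d}/\sqrt{n}$.
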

\begin{proof}
From the assumption $\operatorname{tr}(\bG^{\top} \bC \bG) \geq \operatorname{tr}(\bG^{\star\top} \bC \bG^{\star})$ and the generative model $\bC=\bG^\star \bG^{\star\top}+\bm{\Delta}$, we know that
\begin{align}\label{dist-GstarGhat-key1}
n^{2} d-\operatorname{tr}(\bG^{\top} \bG^{\star}\bG^{\star\top} \bG) 
&= \operatorname{tr}(\bG^{\star\top} \bG^{\star}\bG^{\star\top} \bG^{\star})-\operatorname{tr}(\bG^{\top} \bG^{\star}\bG^{\star\top} \bG)\notag\\
&\leq \operatorname{tr}(\bG^{\top} \bm{\Delta} \bG)-\operatorname{tr}(\bG^{\star\top} \bm{\Delta} \bG^{\star})\notag\\
&=\operatorname{tr}\left(\left(\bG-\bG^{\star}\right)^{\top} \bm{\Delta}\left(\bG+\bG^{\star}\right)\right).
\end{align}
On the other hand, we have
\begin{equation}\label{dist-GstarGhat-key2}
  \operatorname{tr}(\bG^{\top} \bG^{\star}\bG^{\star\top} \bG)\leq \Vert \bG^{\top} \bG^{\star}\Vert\cdot\Vert \bG^{\top} \bG^{\star}\Vert_*\leq n \Vert \bG^{\top} \bG^{\star}\Vert_*.
\end{equation}
Without of loss of generality, we assume $\bG$ satisfies $\operatorname{tr}(\bG^{\top} \bG^{\star})=\|\bG^{\top} \bG^{\star}\|_*$, and then it follows from \eqref{dist-GstarGhat-key1} and \eqref{dist-GstarGhat-key2} that
\[
\begin{aligned}
\frac{1}{2}\operatorname{d}_F([\bG], [\bG^{\star}])^{2}=n d-\|\bG^{\top} \bG^{\star}\|_* 
& \leq \frac{n^{2} d-\operatorname{tr}(\bG^{\top} \bG^{\star}\bG^{\star\top} \bG)}{n} \\
&\leq\frac{1}{n} \operatorname{tr}\left(\left(\bG-\bG^{\star}\right)^{\top} \bm{\Delta}\left(\bG+\bG^{\star}\right)\right) \\
& \leq \frac{1}{n}\|\bm{\Delta}\|\|\bG-\bG^{\star}\|_{F}\|\bG+\bG^{\star}\|_{F} \\
&=\frac{2 \sqrt{n d}}{n}\|\bm{\Delta}\|\cdot \operatorname{d}_F([\bG], [\bG^{\star}]),
\end{aligned}
\]
which implies that \eqref{dist-GstarGhat-ineq} holds. Next, from \eqref{dist-GstarGhat-ineq} we know that
\begin{align}\label{key-from-4.1}
nd-\Vert \bG^{\star\top} \bG\Vert_*=\frac{1}{2}\operatorname{d}_F([\bG^\star],[\bG])^2
&\leq   \frac{8d\|\bm{\Delta}\|^2}{n}.
\end{align}
Since $\|\bG^{\star\top} \bG\| \leq n$ implies $0 \leq \sigma_{l}(\bG^{\star\top} \bG) \leq n$ for $l\in[d]$, combined with \eqref{key-from-4.1}  one has
\[
\sum_{l=1}^{d}\left(n-\sigma_{l}(\bG^{\star\top} \bG)\right)=nd-\Vert \bG^{\star\top} \bG\Vert_*\leq   \frac{8d\|\bm{\Delta}\|^2}{n},
\]
which implies that $\sigma_{\min}(\bG^{\star\top} \bG) \geq n-\frac{8d\|\bm{\Delta}\|^2}{n}$.
This together with the definition $\bC_{i,:} \bG=\bG_i^\star \bG^{\star\top}\bG+\bm{\Delta}_{i,:} \bG$ shows that
\[
\begin{aligned}
\sigma_{\min }(\bC_{i,:}\bG) & \geq  \sigma_{\min}(\bG^{\star\top}\bG)-\|(\bm{\Delta} \bG)_i \| \ge n-\frac{8d\|\bm{\Delta}\|^2}{n}-\|\bm{\Delta} \bG\|_{\infty}.
\end{aligned}
\]
The proof is complete.
\end{proof}

Recall that $\widehat{\bG}$ is one of the optimal solutions of problem \eqref{Problem0} and let $\bm{\Phi}$ be the matrix of top $d$ eigenvectors of $\bC$ with $\bm{\Phi}^\top \bm{\Phi} = n\bm{I}_d$. The following proposition illustrates that the improved estimation error of least-squares estimator which can be controlled by not only  $\ell_2$ distance $\operatorname{d}_F(\cdot,\cdot)$ but also $\ell_{\infty}$ distance $\operatorname{d}_{\infty}(\cdot,\cdot)$.
\begin{prop}[$\ell_{\infty}$-Estimation Error]\label{lemma-distinf-ineq}
Suppose that $\|\bm{\Delta}\|\leq \frac{n}{6d^{1/2}}$ and $\bG=\widehat{\bG}$ or $\bm{\Phi}$. 
Then for $n\ge2$, it follows that
  \begin{equation}\label{distinf-GstarGhat-ineq}
  \operatorname{d}_{\infty}([\bG],[\bG^\star])
  \leq\|\bG\bQ^\star-\bG^\star\|_{\infty}
  \leq
  \frac{8\left\|\bm{\Delta} \bG^\star\right\|_{\infty}}{n},
  \end{equation}
where  $\bQ^\star\in\od$ satisfy $\operatorname{d}_F([\bG],[\bG^\star])=\Vert \bG-\bG^\star \bQ^\star\Vert_F$.
\end{prop}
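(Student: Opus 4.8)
The plan is to establish the $\ell_\infty$ (entrywise) estimation bound via a fixed-point / self-bounding argument on the individual blocks $\bG_i$, exploiting that both the least-squares optimum $\widehat\bG$ and the spectral estimator $\bm\Phi$ satisfy a block-wise optimality identity of the form $\bG_i = \Pi_{\od}(\bC_{i,:}\bG)$ (up to the global ambiguity $\bQ^\star$). First I would record this characterization: for $\bG = \widehat\bG$, the KKT/first-order conditions of \eqref{Problem0} force $S(\widehat\bG)\widehat\bG$ to vanish in the horizontal directions, which combined with $\sigma_{\min}(\bC_{i,:}\widehat\bG) > 0$ from \eqref{singular-lowbd} yields $\widehat\bG_i = \Pi_{\od}(\bC_{i,:}\widehat\bG)$; for $\bG = \bm\Phi$, the eigenvector equation $\bC\bm\Phi = \bm\Phi\bm\Lambda$ with $\bm\Lambda \succ 0$ (guaranteed by the noise bound, since the top $d$ eigenvalues of $\bC$ dominate) similarly gives $\bm\Phi_i = \Pi_{\od}(\bC_{i,:}\bm\Phi)\cdot(\text{something})$ after a polar factor is absorbed — I would check that in fact $\bm\Phi_i$ is a positive-definite rescaling away from $\Pi_{\od}(\bC_{i,:}\bm\Phi)$, and handle this with the same non-expansiveness estimate.

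The core of the argument is then: fix $\bQ^\star \in \od$ realizing $\operatorname{d}_F([\bG],[\bG^\star])$, write $\widetilde\bG := \bG\bQ^\star$, and estimate $\|\widetilde\bG_i - \bG_i^\star\|_F$ for each $i$. Using $\widetilde\bG_i = \Pi_{\od}(\bC_{i,:}\widetilde\bG)$ (the projection commutes with right-multiplication by $\bQ^\star$) and $\bG_i^\star = \Pi_{\od}(\bG^\star\bG^{\star\top}_{} \text{-ish term})$, I would use the perturbation bound for the polar/orthogonal-Procrustes projection: if $\sigma_{\min}(\bm Z) \geq \mu > 0$, then $\|\Pi_{\od}(\bm Z) - \Pi_{\od}(\bm Z')\|_F \leq \frac{2}{\mu}\|\bm Z - \bm Z'\|_F$ (a standard Lipschitz estimate for $\bm U_{\bm Z}\bm V_{\bm Z}^\top$ away from rank-deficiency). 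Applying this with $\bm Z = \bC_{i,:}\widetilde\bG = \bG_i^\star\bG^{\star\top}\widetilde\bG + \bm\Delta_{i,:}\widetilde\bG$ and $\bm Z' = \bG_i^\star(\bG^{\star\top}\widetilde\bG)$ decomposes the error into (a) a term controlled by $\|\bm\Delta_{i,:}\widetilde\bG\|_F = \|(\bm\Delta\widetilde\bG)_i\|_F$ and (b) a term measuring how far $\Pi_{\od}(\bG_i^\star \bm M)$ is from $\bG_i^\star$ when $\bm M := \bG^{\star\top}\widetilde\bG$ is close to $\bm I_d$ — and \eqref{eq: Gstarhatsingularvalue} gives $\bm M \succ 0$ with $\sigma_{\min}(\bm M) \geq n - 8d\|\bm\Delta\|^2/n$, so $\Pi_{\od}(\bG_i^\star\bm M) = \bG_i^\star$ exactly. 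Thus the error is driven purely by the noise term, and I would further bound $\|(\bm\Delta\widetilde\bG)_i\|_F \leq \|(\bm\Delta\bG^\star)_i\|_F + \|\bm\Delta\|\cdot\|\widetilde\bG - \bG^\star\|_\infty + (\text{cross terms using } \|\bm\Delta\|)$, producing a self-referential inequality $\|\widetilde\bG - \bG^\star\|_\infty \leq \frac{2}{\mu}\big(\|\bm\Delta\bG^\star\|_\infty + c\frac{\|\bm\Delta\|}{n}\|\widetilde\bG - \bG^\star\|_\infty\big)$ with $\mu \approx n$; under $\|\bm\Delta\| \leq \frac{n}{6\sqrt d}$ the coefficient of the self term is $< 1$ (this is where the constant $6$ and the final constant $8$ come from), and rearranging yields $\|\widetilde\bG - \bG^\star\|_\infty \leq \frac{8\|\bm\Delta\bG^\star\|_\infty}{n}$. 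The first inequality in \eqref{distinf-GstarGhat-ineq} is then immediate from the definition of $\operatorname{d}_\infty$.

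The main obstacle I anticipate is twofold. First, obtaining the clean block-wise fixed-point identity $\bG_i = \Pi_{\od}(\bC_{i,:}\bG)$ for the \emph{spectral} estimator $\bm\Phi$ is less transparent than for $\widehat\bG$: $\bm\Phi$ only satisfies the global eigen-relation, so one must argue that each block $\bm\Phi_i$, after the Procrustes alignment, coincides with (or is a positive-definite reshaping of) the projection of its own row of $\bC\bm\Phi$; I would need the positive-definiteness of the relevant $d\times d$ Gram factor, which follows from the noise bound but requires care. Second, tracking the constants so that the self-bounding coefficient stays strictly below $1$ and the final constant is exactly $8$ requires carefully splitting $\|(\bm\Delta\widetilde\bG)_i\|_F$ and invoking \eqref{eq: Gstarhatsingularvalue}, \eqref{singular-lowbd}, and the Lipschitz constant $2/\mu$ for the polar projection with the right numerical slack — the inequality $\|\bm\Delta\| \leq n/(6\sqrt d)$ is presumably tuned precisely so that $8$ works for all $n \geq 2$. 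Everything else (non-expansiveness of $\exp$ is not needed here; the Procrustes Lipschitz bound is standard) should be routine once these two points are nailed down.
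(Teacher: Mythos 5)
Your route is genuinely different from the paper's. The paper proves \eqref{distinf-GstarGhat-ineq} by a leave-one-block-out comparison: for each $t$ it builds a competitor $\tilde{\bG}$ that agrees with $\bG$ except that the $t$-th block is replaced by $\bG_t^\star$, invokes $\operatorname{tr}(\bG^\top\bC\bG)\ge\operatorname{tr}(\tilde{\bG}^\top\bC\tilde{\bG})$, and expands to get the single-block inequality $2\|\bm{\Delta}_{t,:}\bG\|_F\,\|\bG_t\bQ^\star-\bG_t^\star\|_F\ge(\sigma_{\min}(\bG^{\star\top}\bG)-1)\|\bG_t\bQ^\star-\bG_t^\star\|_F^2$, which it divides out using \eqref{eq: Gstarhatsingularvalue}. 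This handles $\widehat{\bG}$ and $\bm{\Phi}$ with one argument and never needs a fixed-point characterization. Your plan (block fixed-point identity plus a Lipschitz bound for $\Pi_{\od}$) is closer to the GPM analyses in the cited literature, and for $\widehat{\bG}$ the identity $\widehat{\bG}_i=\Pi_{\od}(\bC_{i,:}\widehat{\bG})$ together with $\Pi_{\od}(\bG_i^\star\bm{M})=\bG_i^\star$ for symmetric $\bm{M}\succ0$ is a workable skeleton.

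However, there are two genuine gaps. First, the spectral case does not fit your skeleton: the eigen-relation gives $\bC_{i,:}\bm{\Phi}=\bm{\Phi}_i\bm{\Lambda}$ with $\bm{\Lambda}$ acting on the \emph{right}, and since $\bm{\Phi}_i\notin\od$, writing $\bm{\Phi}_i=\bm{U}_i\bm{P}_i$ (polar) gives $\bC_{i,:}\bm{\Phi}=\bm{U}_i(\bm{P}_i\bm{\Lambda})$ where $\bm{P}_i\bm{\Lambda}$ is in general not symmetric, so $\Pi_{\od}(\bC_{i,:}\bm{\Phi})$ is \emph{not} the orthogonal factor of $\bm{\Phi}_i$ and is not a ``positive-definite rescaling'' away from it. You would additionally have to control $\|\bm{\Phi}_i-\Pi_{\od}(\bm{\Phi}_i)\|_F$ and the misalignment caused by $\bm{\Lambda}\neq n\bm{I}_d$; none of this is sketched, and the proposition is stated for $\bm{\Phi}$ itself, not for $\Pi_{\odn}(\bm{\Phi})$. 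Second, your self-bounding step does not close quantitatively. The cross term satisfies only $\|\bm{\Delta}_{i,:}(\tilde{\bG}-\bG^\star)\|_F\le\|\bm{\Delta}\|\,\|\tilde{\bG}-\bG^\star\|_F\le\sqrt{n}\,\|\bm{\Delta}\|\,\|\tilde{\bG}-\bG^\star\|_\infty$, so after dividing by $\mu\approx n$ the coefficient of the self term is $2\|\bm{\Delta}\|/\sqrt{n}$, not $c\|\bm{\Delta}\|/n$; under $\|\bm{\Delta}\|\le n/(6\sqrt{d})$ this is $\sqrt{n}/(3\sqrt{d})$, which exceeds $1$ once $n>9d$, so the inequality cannot be rearranged. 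Replacing the self term by the $\ell_2$ bound of Lemma \ref{dist-GstarGhat} instead contributes an additive error of order $\sqrt{d}\|\bm{\Delta}\|^2/n^{3/2}$, which is not dominated by $\|\bm{\Delta}\bG^\star\|_\infty/n$ under the stated hypotheses. The paper sidesteps this entirely because its comparison inequality is linear-versus-quadratic in the \emph{single} block error and is solved by division, with no $\ell_\infty$ self-reference.
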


\begin{proof}
Fix $t\in\{1,\ldots,n\}$ and let $\tilde{\bG} \in \mathbb{R}^{nd \times d}$ satisfy 
\[ 
  \tilde{\bG}_{i}=\left\{
  \begin{array}{c@{\quad}l}
  \displaystyle \bG^\star_{i}, & \textrm{if } i=t, \\
  \noalign{\smallskip}
  \bG_{i}, & \textrm{otherwise}.
  \end{array}
  \right.
\]
Since $\bG=\widehat{\bG}$ (resp. $\bG=\bm{\Phi}$) is the optimum of the optimization problem $\max_{\bG\in\odn} \operatorname{tr}(\bG^\top \bC\bG)$ (resp. $\max_{\bm{X}^\top\bm{X}=n\bm{I}_d} \operatorname{tr}(\bm{X}^\top \bC\bm{X})$), we know that $\operatorname{tr}(\bG^\top \bC\bG) \geq \operatorname{tr}(\tilde{\bG}^\top \bC \tilde{\bG})$, which is equivalent to 
\[
\operatorname{tr}\left((\bG\bQ^\star-\tilde{\bG})^\top \bC \bG\bQ^\star\right)+\operatorname{tr}\left(\bQ^{\star\top}\bG^\top \bC(\bG\bQ^\star-\tilde{\bG})\right)-\operatorname{tr}\left((\bG\bQ^\star-\tilde{\bG})^\top \bC(\bG\bQ^\star-\tilde{\bG})\right) \geq 0.
\] 
This together with the definition of $\tilde{\bG}$, $\bG$ and $\bC=\bG^\star \bG^{\star\top}+\bm{\Delta}$ implies that
\begin{equation*}
\begin{aligned}
& 2 \operatorname{tr}\left((\bG_{t}\bQ^\star-\bG^\star_{t})^\top \bG^\star_{t} \bG^{\star\top} \bG\bQ^\star\right) +2 \operatorname{tr}\left((\bG_{t}\bQ^\star-\bG^\star_{t})^\top\bm{\Delta}_{t,:} \bG\bQ^\star\right)\\
& -\operatorname{tr}\left((\bG_{t}\bQ^\star-\bG^\star_{t})^\top \bC_{tt}(\bG_{t}\bQ^\star-\bG^\star_{t})\right)\\
=&\ \operatorname{tr}\left((\bG_{t}\bQ^\star-\bG^\star_{t})^\top \bC_{t,:} \bG\bQ^\star\right) + \operatorname{tr}\left( \bQ^{\star\top}\bG^\top \bC_{t,:}^\top(\bG_{t}\bQ^\star-\bG^\star_{t})\right)\\
&-\operatorname{tr}\left((\bG_{t}\bQ^\star-\bG^\star_{t})^\top \bC_{tt}(\bG_{t}\bQ^\star-\bG^\star_{t})\right)
\geq 0,
\end{aligned}
\end{equation*}
and consequently
\begin{equation}\label{conponentwise-key0}
\begin{aligned}
  \operatorname{tr}\left((\bG_{t}\bQ^\star-\bG^\star_{t})^\top\bm{\Delta}_{t,:} \bG\bQ^\star\right)
&\ge  \operatorname{tr}\left((\bG^\star_{t}-\bG_{t}\bQ^\star)^\top \bG^\star_{t} \bG^{\star\top} \bG\bQ^\star\right)\\
&\quad + \frac{1}{2}\operatorname{tr}\left((\bG_{t}\bQ^\star-\bG^\star_{t})^\top \bC_{tt}(\bG_{t}\bQ^\star-\bG^\star_{t})\right)
\end{aligned}
\end{equation}
Note that $\bG^{\star\top} \bG \bQ^\star$ is symmetric and satisfies $\bG^{\star\top} \bG \bQ^\star\succeq \bm{0}$, 
and then it follows that
\begin{equation}\label{conponentwise-key1}
\begin{aligned}
&\operatorname{tr} \left((\bG^\star_{t}-\bG_{t}\bQ^\star)^\top \bG^\star_{t} \bG^{\star\top} \bG\bQ^\star\right)\\
=&\ \operatorname{tr} \left( \bG^\star_{t} \bG^{\star\top} \bG\bQ^\star\right)-\frac{1}{2}\operatorname{tr} \left(\bQ^{\star\top}\bG_{t}^\top \bG^\star_{t} \bG^{\star\top} \bG\bQ^\star\right)-\frac{1}{2}\operatorname{tr} \left( \bG^{\star\top}_{t}\bG_{t}\bQ^{\star} \bG^{\star\top} \bG\bQ^\star\right)\\
=&\ \frac{1}{2} \operatorname{tr}\left((\bG_{t}\bQ^\star-\bG_{t}^\star)^{\top}(\bG_{t}\bQ^\star-\bG^\star_{t})\bG^{\star\top} \bG \bQ^\star\right) \\
\geq&\ \frac{1}{2}\sigma_{\min}(\bG^{\star\top} \bG)\cdot\|\bG_{t}\bQ^\star-\bG^\star_{t}\|_{F}^{2}.
\end{aligned}
\end{equation}
On the other hand, one has that
\begin{equation}\label{conponentwise-key3}
\operatorname{tr}\left((\bG_{t}\bQ^\star-\bG^\star_{t})^\top\bm{\Delta}_{t,:} \bG\bQ^\star\right) \leq \|\bm{\Delta}_{t,:} \bG\|_{F}\|\bG_{t}\bQ^\star-\bG^\star_{t}\|_{F} ,
\end{equation}
and also 
\begin{equation}\label{conponentwise-key2}
\operatorname{tr}\left((\bG_{t}\bQ^\star-\bG^\star_{t})^\top \bC_{tt}(\bG_{t}\bQ^\star-\bG^\star_{t})\right) \geq-\|\bC_{tt}\|\cdot\|\bG_{t}\bQ^\star-\bG^\star_{t}\|_{F}^{2} \geq-\|\bG_{t}\bQ^\star-\bG^\star_{t}\|_{F}^{2}
\end{equation}
(noting that $\|\bm{\Delta}_{tt}\|=0$).
Thus, combining \eqref{conponentwise-key0} with \eqref{conponentwise-key1}, \eqref{conponentwise-key3} and \eqref{conponentwise-key2}, it concludes that
\[
2\|\bm{\Delta}_{t,:} \bG\|_{F}\|\bG_{t}\bQ^\star-\bG^\star_{t}\|_{F} \geq(\sigma_{\min}(\bG^{\star\top} \bG)-1)\cdot\|\bG_{t}\bQ^\star-\bG^\star_{t}\|_{F}^{2} .
\]
Consequently, from \eqref{eq: Gstarhatsingularvalue} and the arbitrariness of $t$, it follows that
\begin{equation}\label{conponentwise-con}
\begin{aligned}
\operatorname{d}_{\infty}([\bG],[\bG^\star]) \leq \|\bG\bQ^\star-\bG^\star\|_{\infty}
  &= \max_{i\in[n]}\|\bG_i\bQ^\star-\bG^{\star}_i\|_F\\
  &\leq\frac{2\max_{i \in [n]}\left\|\bm{\Delta}_{i,:} \bG^\star\right\|_{F}}{n-\frac{8d\|\bm{\Delta}\|^2}{n}-1}\leq\frac{8\left\|\bm{\Delta} \bG^\star\right\|_{\infty}}{n},
  \end{aligned}
  \end{equation}
where the last inequality is from $\|\bm{\Delta}\|\leq \frac{n}{6d^{1/2}}$ when $n\ge2$. The proof is complete. 
\end{proof}

\begin{remark}\label{rmk:information-limit}
Under the standard Gaussian noise setting (i.e., the noise matrices take the form $\bm{\Delta}_{ij} = \sigma \bm{W}_{ij}$ for $i<j$, where $\bm{W}_{ij} \in \mathbb{R}^{d \times d}$ has independent standard Gaussian entries and $\sigma>0$ is the noise level),
we know that with high probability
\[
\|\bm{\Delta} \bG^\star\|_{\infty}\leq \sqrt{d}\cdot\max _{1 \leq i \leq n}\|\bm{\Delta}_{i,:} \bG^\star\| \lesssim \sigma \sqrt{n}(d+\sqrt{d\log n})
\]
by taking the union bound over all $1 \leq i \leq n$ on
$\bm{\Delta}_{i,:} \bG^\star=\sigma \sum_{j \neq i} \boldsymbol{W}_{i j}$ with \cite[Theorem 4.4.5]{vershynin2018high}. 
Thus, $\sigma\lesssim \sqrt{n/d^2\log n}$ would satisfy the requirement in Proposition \ref{lemma-distinf-ineq} for exact recovery. On the other hand, it is believed that $\sigma\lesssim \sqrt{n / d}$ (i.e., $\|\bm{\Delta}\|\lesssim n$ since $\|\boldsymbol{\Delta}\| \lesssim  \sigma \sqrt{n d}$ with high probability \cite{bandeira2017tightness}) is the threshold above which is information-theoretically impossible to exactly recover the signals of the generative model $\bC=\bG\bG^{\top}+\bm{\Delta}$ \cite{jung2020weak}. 
Hence, the result in Proposition \ref{lemma-distinf-ineq} is near-optimal in $n$ and
only differs from the information-theoretic threshold by a sub-optimal factor of $\sqrt{d}$.
\end{remark}

\begin{remark}
With the assumption 
that $\|\bm{\Delta}\|\lesssim n/\sqrt{d}$ and $\|\bm{\Delta} \bG^\star\|_{\infty}\lesssim n$ (i.e., $\sigma\lesssim \sqrt{n/d^2\log n}$ under standard Gaussian noise setting from Remark \ref{rmk:information-limit}), 
we know from Proposition \ref{lemma-distinf-ineq} that $\operatorname{d}_{\infty}([\widehat{\bG}],[\bG^\star]) = \mathcal{O}(1)$. Then the least-squares estimator $\widehat{\bG}$ staying in the same/totally reverse connected component with $\bG^\star$ (otherwise $\operatorname{d}_{\infty}([\widehat{\bG}], [\bG^\star])\geq\sqrt{2}$; cf. \cite[p. 101]{rosen2019se}). This 
makes the relaxation of problem \eqref{Problem0} for \eqref{ProblemR} tight and the results of 
the spectral method \cite{ling2022near}, the SDR \cite{won2022orthogonal,ling2022solving}, the Burer-Monteiro factorization \cite{boumal2016non,ling2022solving} and the GPM \cite{liu2020unified,zhu2021orthogonal,ling2022improved}
developed for solving orthogonal group synchronization
can be directly applied to solve the rotation synchronization problem under near-optimal noise level for exact recovery.
\end{remark}

\begin{figure}[h]
    \centering
    \includegraphics[width=0.65\textwidth]{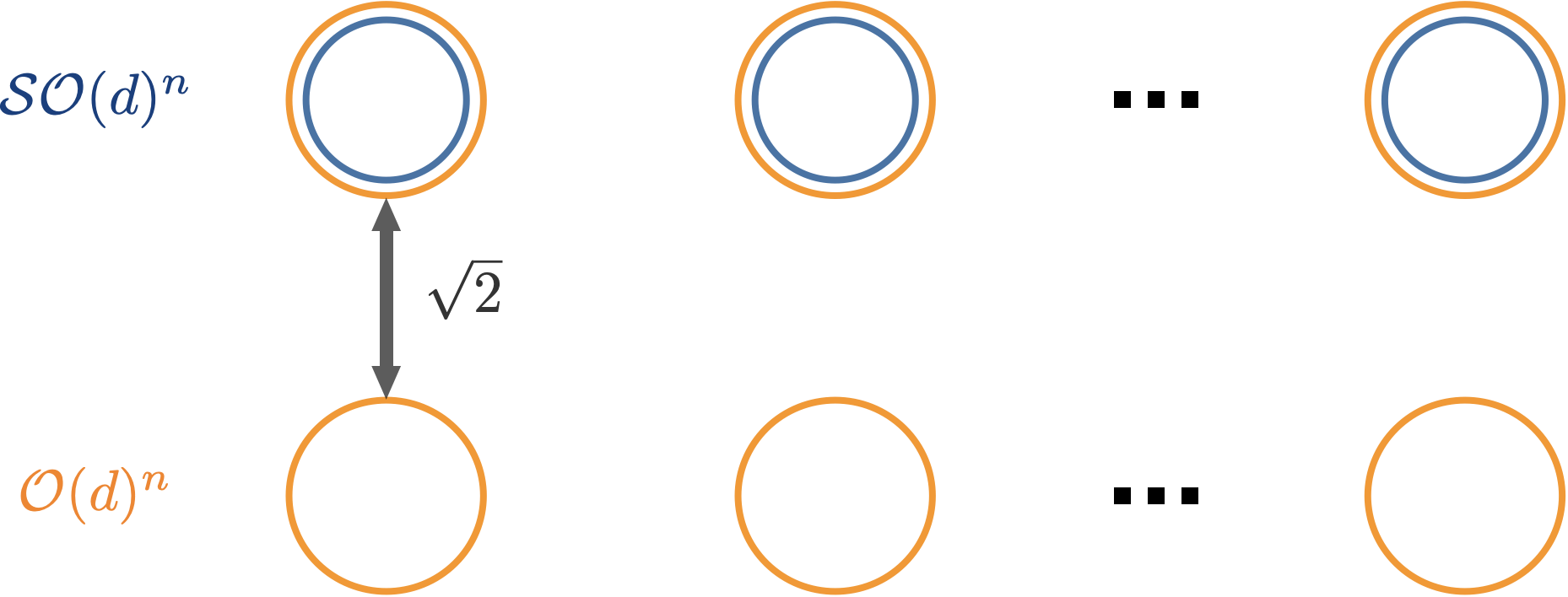}
    \caption{Minimal distance between components \& Tightness of \eqref{Problem0} for \eqref{ProblemR}.}
    \label{fig:samecomponent}
\end{figure}

\section{Landscape Analysis via Quotient Manifold}
\label{section-landscape}
In this section, we focus on the landscape analysis of orthogonal synchronization problem through the established quotient approach. 
We will show the local strongly concave property around 
the global maximizer $[\widehat{\bG}]$ of the problem \eqref{Problem}  (i.e., $\widehat{\bG}\in\odn$ is a global maximizer of the original problem \eqref{Problem0}). Then, as a byproduct, we derive the local error bound property of the original problem \eqref{Problem0}.

First, we introduce the main result of this section that problem \eqref{Problem} is locally geodesically strongly concave around the global maximizer under certain additive noise.
\begin{theorem}[Local Geodesic Strong Concavity]\label{hess-positive}
Suppose that $\|\bm{\Delta}\|\le \frac{n^{3/4}}{20d^{1/2}}$ and $\|\bm{\Delta} \bG^{\star}\|_{\infty}\le \frac{n}{20}$. Then with
\begin{equation}\label{dist-rhoF-rhoinfinity}
\rho_F=\frac{1}{10}\min\left\{\sqrt{n},\frac{n}{\|\bm{\Delta}\|}\right\}\quad \text{and}\quad  \rho_{\infty}=\frac{1}{4}
\end{equation}
such that for any $\bG\in\odn$ satisfying $\operatorname{d}_F([\bG], [\widehat{\bG}])\leq \rho_F$ and $\|\bG\widehat{\bQ}- \widehat{\bG}\|_\infty\leq \rho_{\infty}$, it follows that 
\[
-\left\langle\Hess f([\bG])[\bm{H}_{[\bG]}], \bm{H}_{[\bG]}\right\rangle\ge \frac{n}{5}\cdot\langle \bm{H}_{[\bG]}, \bm{H}_{[\bG]}\rangle>0,\quad \text{for all}\ \bm{H}_{[\bG]} \in \operatorname{T}_{[\bG]} \mathcal{Q} \backslash\left\{\bm{0}_{[\bG]}\right\},
\]
where $\widehat{\bQ}\in\od$ satisfies $\operatorname{d}_F([\bG],[\widehat{\bG}])=\Vert \bG \widehat{\bQ}-\widehat{\bG}\Vert_F$.
\end{theorem}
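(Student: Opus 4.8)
The plan is to reduce the Hessian inequality to a spectral lower bound for the matrix $S(\bar{\bG})$ on the horizontal space, and then to control $S(\bar{\bG})$ by comparison with $S(\widehat{\bG})$, for which first-order optimality of $\widehat{\bG}$ together with the estimation bounds of Section~\ref{section-improvedest} supply a clean $\Theta(n)$ curvature gap. First I would unwind the Hessian quadratic form: using \eqref{Hess-cal}, the identity $\operatorname{Proj}_{\mathcal{H}_{\bar{\bG}}}(\cdot)=(\bm{I}_{nd}-\tfrac1n\bar{\bG}\bar{\bG}^\top)(\cdot)$ on $\operatorname{T}_{\bar{\bG}}\odn$, the self-adjointness of the orthogonal projections $\operatorname{Proj}_{\mathcal{H}_{\bar{\bG}}}$ and $\operatorname{Proj}_{\operatorname{T}_{\bar{\bG}}\odn}$, and the fact that the horizontal lift $\bar{\bm{H}}_{\bar{\bG}}$ already lies in $\mathcal{H}_{\bar{\bG}}\subseteq\operatorname{T}_{\bar{\bG}}\odn$, I obtain
\[
-\langle\Hess f([\bG])[\bm{H}_{[\bG]}],\bm{H}_{[\bG]}\rangle = 2\langle S(\bar{\bG})\bar{\bm{H}}_{\bar{\bG}},\bar{\bm{H}}_{\bar{\bG}}\rangle,\qquad \langle\bm{H}_{[\bG]},\bm{H}_{[\bG]}\rangle=\|\bar{\bm{H}}_{\bar{\bG}}\|_F^2,
\]
so it suffices to prove $\langle S(\bar{\bG})\bm{H},\bm{H}\rangle\ge\tfrac{n}{10}\|\bm{H}\|_F^2$ for every $\bm{H}\in\mathcal{H}_{\bar{\bG}}$. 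Since the left side does not depend on the representative of $[\bG]$, I fix $\bar{\bG}:=\bG\widehat{\bQ}$, so that $\|\bar{\bG}-\widehat{\bG}\|_F=\operatorname{d}_F([\bG],[\widehat{\bG}])\le\rho_F$ and $\|\bar{\bG}-\widehat{\bG}\|_\infty\le\rho_\infty=\tfrac14$, and I record that any $\bm{H}\in\mathcal{H}_{\bar{\bG}}$ satisfies $\bm{H}^\top\bar{\bG}=0$ (because $\sum_i\bm{E}_i=0$), hence is ``almost'' orthogonal to $\operatorname{col}(\widehat{\bG})$.

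Second I would analyze $S(\widehat{\bG})$ at the maximizer. By \eqref{grad-cal} we have $S(\widehat{\bG})\widehat{\bG}=0$, so $S(\widehat{\bG})=\bm{\Lambda}-\bC$ with $\bm{\Lambda}$ block-diagonal and symmetric blocks $\bm{\Lambda}_{ii}=\bC_{i,:}\widehat{\bG}\widehat{\bG}_i^\top$. Writing $\bm{\Lambda}_{ii}-n\bm{I}_d=(\bC_{i,:}\widehat{\bG}-n\widehat{\bG}_i)\widehat{\bG}_i^\top$, expanding $\bC=\bG^\star\bG^{\star\top}+\bm{\Delta}$, and invoking Lemma~\ref{dist-GstarGhat} (the singular-value bounds \eqref{eq: Gstarhatsingularvalue} and \eqref{singular-lowbd}), Proposition~\ref{lemma-distinf-ineq} applied to $\bG=\widehat{\bG}$, and the estimate $\|\bm{\Delta}\widehat{\bG}\|_\infty\le\|\bm{\Delta}\|\operatorname{d}_F([\widehat{\bG}],[\bG^\star])+\|\bm{\Delta}\bG^\star\|_\infty$, I would show that under the stated noise bounds each $\bm{\Lambda}_{ii}\succeq\mu_0\bm{I}_d$ with $\mu_0=n-\tfrac{8d\|\bm{\Delta}\|^2}{n}-\|\bm{\Delta}\widehat{\bG}\|_\infty$ of order $n$ (positivity of the eigenvalues from $\|\bm{\Lambda}_{ii}-n\bm{I}_d\|<n$, their magnitude from \eqref{singular-lowbd}). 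Then for $w\perp\operatorname{col}(\widehat{\bG})$,
\[
\langle S(\widehat{\bG})w,w\rangle\ge\mu_0\|w\|_F^2-\|\bG^{\star\top}w\|_F^2-\|\bm{\Delta}\|\,\|w\|_F^2\ge\Big(\mu_0-\tfrac{16d\|\bm{\Delta}\|^2}{n}-\|\bm{\Delta}\|\Big)\|w\|_F^2,
\]
using $\|\bG^{\star\top}w\|_F\le\operatorname{d}_F([\bG^\star],[\widehat{\bG}])\,\|w\|_F$ (since $\widehat{\bG}^\top w=0$) and \eqref{dist-GstarGhat-ineq}, which is $\ge c_1 n\|w\|_F^2$ for a constant $c_1$ close to $1$. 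Splitting $\bm{H}=\bm{H}_\parallel+\bm{H}_\perp$ with $\bm{H}_\parallel=\tfrac1n\widehat{\bG}\widehat{\bG}^\top\bm{H}\in\operatorname{col}(\widehat{\bG})$, the identity $\widehat{\bG}^\top\bm{H}=(\widehat{\bG}-\bar{\bG})^\top\bm{H}$ with $\|\bar{\bG}-\widehat{\bG}\|\le\rho_F\le\sqrt n/10$ gives $\|\bm{H}_\parallel\|_F\le\tfrac{\rho_F}{\sqrt n}\|\bm{H}\|_F\le\tfrac1{10}\|\bm{H}\|_F$, so $S(\widehat{\bG})\bm{H}_\parallel=0$ and $\langle S(\widehat{\bG})\bm{H},\bm{H}\rangle=\langle S(\widehat{\bG})\bm{H}_\perp,\bm{H}_\perp\rangle\ge c_1\tfrac{99}{100}n\|\bm{H}\|_F^2$.

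Third I would bound the perturbation $S(\bar{\bG})-S(\widehat{\bG})=\operatorname{symblockdiag}\big(\bC(\bar{\bG}\bar{\bG}^\top-\widehat{\bG}\widehat{\bG}^\top)\big)$, whose operator norm equals $\max_i\|\operatorname{sym}(\bC_{i,:}\bar{\bG}(\bar{\bG}_i-\widehat{\bG}_i)^\top+\bC_{i,:}(\bar{\bG}-\widehat{\bG})\widehat{\bG}_i^\top)\|$. Estimating each factor via $\|\bC_{i,:}\|\le\sqrt n+\|\bm{\Delta}\|$, $\|\bC_{i,:}\widehat{\bG}\|\lesssim n$ and $\|\bC_{i,:}\bar{\bG}\|\lesssim n$ (obtained from $\bC=\bG^\star\bG^{\star\top}+\bm{\Delta}$ and the $\ell_\infty$ estimation bound; the naive bound $\|\bm{\Delta}_{i,:}\widehat{\bG}\|\le\|\bm{\Delta}\|\sqrt n$ would yield a spurious $n^{5/4}$ term), $\|\bar{\bG}-\widehat{\bG}\|\le\rho_F$, $\|\bar{\bG}_i-\widehat{\bG}_i\|\le\rho_\infty=\tfrac14$, and $\sqrt n\rho_F\le\tfrac n{10}$, $\|\bm{\Delta}\|\rho_F\le\tfrac n{10}$, I get $\|S(\bar{\bG})-S(\widehat{\bG})\|\le c_2\, n$ with $c_2$ comfortably below $c_1\tfrac{99}{100}-\tfrac1{10}$. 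Combining with the previous step, $\langle S(\bar{\bG})\bm{H},\bm{H}\rangle\ge(c_1\tfrac{99}{100}-c_2)n\|\bm{H}\|_F^2\ge\tfrac n{10}\|\bm{H}\|_F^2$, and the reduction of the first step then gives $-\langle\Hess f([\bG])[\bm{H}_{[\bG]}],\bm{H}_{[\bG]}\rangle\ge\tfrac n5\langle\bm{H}_{[\bG]},\bm{H}_{[\bG]}\rangle>0$ for $\bm{H}_{[\bG]}\ne\bm{0}_{[\bG]}$.

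The main obstacle is the second step: establishing the $\Theta(n)$ positive-curvature gap of $S(\widehat{\bG})$ transverse to $\operatorname{col}(\widehat{\bG})$ --- in particular showing the block-diagonal multiplier $\bm{\Lambda}$ is positive definite with the correct eigenvalue floor --- and, throughout, steering the constants so that the three sources of loss (the horizontal vector being orthogonal to $\operatorname{col}(\bar{\bG})$ rather than to $\operatorname{col}(\widehat{\bG})$; the perturbation $\|S(\bar{\bG})-S(\widehat{\bG})\|$; the noise-dependent error terms) all fit within the budget fixed by $\rho_F,\rho_\infty$ and the bounds on $\|\bm{\Delta}\|$ and $\|\bm{\Delta}\bG^\star\|_\infty$ in \eqref{dist-rhoF-rhoinfinity}. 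A recurring subtlety is that several blockwise quantities, notably $\|\bC_{i,:}\widehat{\bG}\|$ and $\|\bm{\Delta}\widehat{\bG}\|_\infty$, must be controlled through the $\ell_\infty$ estimation bound of Proposition~\ref{lemma-distinf-ineq} rather than through naive operator-norm inequalities, which would cost an extra factor $n^{1/4}$ and break the argument.
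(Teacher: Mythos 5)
Your argument is correct, and it is built from the same three ingredients as the paper's proof: the reduction $-\langle\Hess f([\bG])[\bm{H}_{[\bG]}],\bm{H}_{[\bG]}\rangle=2\langle S(\bar{\bG})\bar{\bm{H}}_{\bar{\bG}},\bar{\bm{H}}_{\bar{\bG}}\rangle$ via self-adjointness of the projections; the eigenvalue floor $\sigma_{\min}(\bC_{i,:}\widehat{\bG}\widehat{\bG}_i^\top)\ge n-\tfrac{8d\|\bm{\Delta}\|^2}{n}-\|\bm{\Delta}\widehat{\bG}\|_\infty$ coming from global optimality of $\widehat{\bG}$ (positive semidefiniteness plus \eqref{singular-lowbd}); and the horizontality identity $\bar{\bG}^\top\bar{\bm{H}}_{\bar{\bG}}=\bm{0}$ used to tame the rank-$d$ signal part of $-\bC$. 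Where you differ is the bookkeeping. The paper expands the quadratic form blockwise at $\bG$, perturbs only the block-diagonal multiplier $\bC_{i,:}\bG\bG_i^\top\to\bC_{i,:}\widehat{\bG}\widehat{\bG}_i^\top$, and applies the $\bG^{\star\top}\bm{H}$ bound \eqref{Hess-cal-key} at $\bG$ itself (cost $\operatorname{d}_F([\bG],[\bG^\star])^2\approx n/100$). You instead isolate a cleaner, reusable spectral statement --- $S(\widehat{\bG})\succeq c_1 n$ on $\operatorname{col}(\widehat{\bG})^\perp$, where the analogous cost drops to $\operatorname{d}_F([\widehat{\bG}],[\bG^\star])^2\le 16d\|\bm{\Delta}\|^2/n$ --- and then pay twice for working at $\bG$ rather than $\widehat{\bG}$: once through the alignment loss $\|\bm{H}_\parallel\|_F\le\rho_F\|\bm{H}\|_F/\sqrt{n}$ and once through $\|S(\bar{\bG})-S(\widehat{\bG})\|$. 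Your perturbation estimate $\|\bC_{i,:}(\bar{\bG}-\widehat{\bG})\|\le(\sqrt{n}+\|\bm{\Delta}\|)\rho_F\le n/5$ is cruder than the paper's \eqref{pd-ineq-key}, which routes through $\bG^{\star\top}(\bG\widehat{\bQ}-\widehat{\bG})$ and gets roughly $0.105\,n$, but both fit inside the budget, and your closing margin $(c_1\tfrac{99}{100}-c_2)\gtrsim 0.3>\tfrac{1}{10}$ checks out. You also correctly identify the one step where a naive bound would be fatal: $\|\bm{\Delta}\widehat{\bG}\|_\infty$ and $\|\bC_{i,:}\widehat{\bG}\|$ must be controlled through the transfer \eqref{hat-star-trans} using the hypothesis $\|\bm{\Delta}\bG^\star\|_\infty\le n/20$, since $\|\bm{\Delta}_{i,:}\widehat{\bG}\|\le\|\bm{\Delta}\|\sqrt{n}$ would inject an $n^{5/4}$ term --- this is exactly where the paper spends that hypothesis as well.
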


\begin{proof}
Since for any $[\bG]\in\mathcal{Q}$ and $\bm{H}_{[\bG]} \in \operatorname{T}_{[\bG]} \mathcal{Q} \backslash\{\bm{0}_{[\bG]}\}$ one has that
\[
\left\langle\Hess f([\bG])[\bm{H}_{[\bG]}], \bm{H}_{[\bG]}\right\rangle=\left\langle\overline{\Hess f([\bG])[\bm{H}_{[\bG]}]}_{\bG}, \bar{\bm{H}}_{\bG}\right\rangle,
\]
then it suffices to prove that
\[
-\left\langle\overline{\Hess f([\bG])[\bm{H}_{[\bG]}]}_{\bG}, \bar{\bm{H}}_{\bG}\right\rangle>0, \quad \text{for all}\ \bar{\bm{H}}_{\bG} \in \mathcal{\bm{H}}_{\bG} \backslash \{\bm{0}_{\bG}\},
\]
where from \eqref{Hess-cal} and $\bar{\bm{H}}_{\bG}:=[\ldots;\bG_i\bm{E}_i;\ldots]$ with $\sum_{i=1}^n \bm{E}_i=\bm{0}$ we know that
\begin{align}\label{Hessian-cal}
&-\frac{1}{2}\left\langle\overline{\Hess f([\bG])[\bm{H}_{[\bG]}]}_{\bG}, \bar{\bm{H}}_{\bG}\right\rangle\notag\\
=&\ \operatorname{tr}\left(\bar{\bm{H}}_{\bG}^\top \left(\bm{I}_{nd}-\frac{1}{n} \bG\bG^\top  \right)\left(\operatorname{Proj}_{\operatorname{T}_{\bG}\odn}(S(\bG)\bar{\bm{H}}_{\bG}) \right)        \right)\notag\\
=&\ \operatorname{tr}\left(\bar{\bm{H}}_{\bG}^\top\left(\bm{I}_{nd}-\frac{1}{n}  \bG\bG^\top  \right)\left(\operatorname{Diag}\left(\left[\ldots;\bC_{i,:} \bG\bG_i^\top;\ldots\right]\right)-\bC\right)\bar{\bm{H}}_{\bG}\right)\notag\\
=&\ \operatorname{tr}\left(\left(\left[\ldots,\bm{E}_i^\top \bG_i^\top,\ldots\right]-\frac{1}{n}\sum_i \bm{E}_i^\top \bG^\top\right)\left(\left[\ldots;\bC_{i,:} \bG \bm{E}_i;\ldots\right]-\left[\ldots;\sum_j \bC_{ij} \bG_j \bm{E}_j;\ldots\right]\right)\right)\notag\\
=&\ \operatorname{tr}\left([\ldots,\bm{E}_i^\top \bG_i^\top,\ldots]\left([\ldots;\bC_{i,:} \bG \bm{E}_i;\ldots]-\left[\ldots;\sum_j \bC_{ij} \bG_j \bm{E}_j;\ldots\right]\right)\right)\notag\\
=&\ \operatorname{tr}\left(\sum_i \bm{E}_i^\top \bG_i^\top \bC_{i,:} \bG \bm{E}_i-\sum_i\sum_j \bm{E}_i^\top \bG_i^\top   \bC_{ij} \bG_j\bm{E}_j\right).
\end{align}
Let $\bQ^\star\in\od$ satisfy $\operatorname{d}_F([\bG^\star],[\bG])=\Vert \bG^\star \bQ^\star-\bG\Vert_F$ for any $\bG\in\od$, and from $\sum_{i=1}^n \bm{E}_i=\bm{0}$ we know that $\bG^\top [\ldots;\bG_i \bm{E}_i;\ldots]=\bm{0}$, which 
implies that
\begin{equation}\label{Hess-cal-key}
\left\|\bG^{\star\top}[\ldots;\bG_i \bm{E}_i;\ldots]\right\|_F^2=\left\|[\ldots;\bG_i \bm{E}_i;\ldots]^{\top}(\bG^{\star}-\bG \bQ^{\star})\right\|_{F}^{2} \leq \operatorname{d}_F([\bG^{\star}], [\bG])^{2} \cdot\sum_i\|\bm{E}_i\|_F^2.
\end{equation}
Thus, from \eqref{Hessian-cal} with $\bC=\bG^\star \bG^{\star\top}+\bm{\Delta}$,
we derive that for any $[\bG]\in\mathcal{Q}$ and $\bG\in \pi^{-1}([\bG])$,
  \begin{align}\label{pd-ineq}
    &-\frac{1}{2}\left\langle\overline{\Hess f([\bG])[\bm{H}_{[\bG]}]}_{\bG}, \bar{\bm{H}}_{\bG}\right\rangle\notag\\
    =&\ \operatorname{tr}\left(\sum_i \bm{E}_i^\top \bG_i^\top \bC_{i,:} \bG \bG_i^\top \bG_i \bm{E}_i -\sum_i \sum_j  \bm{E}_i^\top \bG_i^\top \bG_i^{\star}\bG_j^{\star\top}\bG_j\bm{E}_j-\sum_i\sum_j   \bm{E}_i^\top \bG_i^\top \bm{\Delta}_{ij}\bG_j\bm{E}_j \right)\notag\\
  =&\  \operatorname{tr}\left(\sum_i \bm{E}_i^\top \bG_i^\top \bC_{i,:} \widehat{\bG} \widehat{\bG}_i^\top \bG_i \bm{E}_i\right)+\operatorname{tr}\left(\sum_i \bm{E}_i^\top \bG_i^\top (\bC_{i,:} \bG \bG_i^\top-\bC_{i,:} \widehat{\bG} \widehat{\bG}_i^\top) \bG_i \bm{E}_i\right)\notag\\
  & - \left\|\bG^{\star\top}[\ldots;\bG_i \bm{E}_i;\ldots]\right\|_F^2 -\operatorname{tr}\left([\ldots;\bG_i \bm{E}_i;\ldots]^\top\bm{\Delta}[\ldots;\bG_i \bm{E}_i;\ldots]\right)\notag\\
    \ge&\ \left(\min_{i\in[n]}\left\{\sigma_{\min}(\bC_{i,:}\widehat{\bG}\widehat{\bG}_i^\top)\right\}-\max_{i\in[n]}\|\bC_{i,:}(\bG\widehat{\bQ}-\widehat{\bG})\|_F-\max_{i\in[n]}\|\bC_{i,:}\widehat{\bG}(\bG_i\widehat{\bQ}-\widehat{\bG}_i)^\top\|_F\right)\sum_i\Vert \bm{E}_i\Vert_F^2 \notag\\
    & - \operatorname{d}_F([\bG^{\star}], [\bG])^{2} \cdot\sum_i\|\bm{E}_i\|_F^2 -  \|\bm{\Delta}\|\cdot\sum_i\|\bm{E}_i\|_F^2,
    \end{align}
where $\widehat{\bG}\in\od$ is a global maximizer of problem \eqref{Problem0} indicating that $\bC_{i,:}\widehat{\bG}\widehat{\bG}_i^\top\succeq\bm{0}$ by \cite[Lemma 3.6]{zhu2021orthogonal}, and the inequality is from \eqref{Hess-cal-key}, Cauchy-Schwarz inequality and also
\begin{align*}
&\operatorname{tr}\left(\sum_i \bm{E}_i^\top \bG_i^\top (\bC_{i,:} \bG \bG_i^\top-\bC_{i,:} \widehat{\bG} \widehat{\bG}_i^\top) \bG_i \bm{E}_i\right)\notag\\
\ge&\ -  \sum_i \|\bC_{i,:} \bG \bG_i^\top-\bC_{i,:} \widehat{\bG}\widehat{\bQ}^\top \bG_i^\top+\bC_{i,:} \widehat{\bG}\widehat{\bQ}^\top \bG_i^\top-\bC_{i,:} \widehat{\bG} \widehat{\bG}_i^\top\|_F\|\bm{E}_i\|_F^2\notag\\
\ge&\ -\left(\max_{i\in[n]}\|\bC_{i,:}(\bG\widehat{\bQ}-\widehat{\bG})\|_F+\max_{i\in[n]}\|\bC_{i,:}\widehat{\bG}(\bG_i\widehat{\bQ}-\widehat{\bG}_i)^\top\|_F\right)\sum_i\Vert \bm{E}_i\Vert_F^2.
\end{align*}

To proceed, in the following part we will analyze each term in \eqref{pd-ineq}. First, from  \eqref{singular-lowbd} one has that
\begin{equation}\label{pd-ineq-key-0}
\min_{i\in[n]}\left\{\sigma_{\min}(\bC_{i,:}\widehat{\bG}\widehat{\bG}_i^\top)\right\}=\min_{i\in[n]}\left\{\sigma_{\min}(\bC_{i,:}\widehat{\bG})\right\}\ge n-\frac{8d\|\bm{\Delta}\|^2}{n}-\|\bm{\Delta} \widehat{\bG}\|_{\infty}.
\end{equation}
Next, since we know from \eqref{dist-GstarGhat-ineq} that
\begin{align}
  \Vert \bG^{\star\top}(\bG\widehat{\bQ} - \widehat{\bG})\Vert_F
  &\le \Vert (\bG^\star-\widehat{\bG}\widehat{\bQ}^\star)^\top (\bG\widehat{\bQ} - \widehat{\bG})\Vert_F +\Vert \widehat{\bG}^\top (\bG\widehat{\bQ} - \widehat{\bG})\Vert_F\notag\\
  &\le \Vert \bG^\star-\widehat{\bG}\widehat{\bQ}^\star\Vert_F\Vert \bG\widehat{\bQ} - \widehat{\bG}\Vert_F + \sqrt{\Vert \widehat{\bG}^\top \bG\|_F^2-2n\|\widehat{\bG}^\top \bG\|_*+ n^2 d}\notag\\
   &= \operatorname{d}_F([\bG],[\bG^\star])\cdot\operatorname{d}_F([\bG],[\widehat{\bG}]) + \sqrt{\sum_{i}^d\left(n-\sigma_i(\widehat{\bG}^\top \bG)\right)^2}\notag\\
   & \leq \frac{4\sqrt{d} \left\| \bm{\Delta} \right\| }{\sqrt{n}}\cdot \operatorname{d}_F([\bG],[\widehat{\bG}])+\sqrt{\left(nd-\|\widehat{\bG}^\top \bG\|_*\right)^2}\notag\\
  & = \frac{4\sqrt{d} \left\| \bm{\Delta} \right\| }{\sqrt{n}}\cdot \operatorname{d}_F([\bG],[\widehat{\bG}])+\frac{1}{2}\operatorname{d}_F([\bG],[\widehat{\bG}])^2,\notag
  \end{align} 
where $\widehat{\bQ}^\star\in\od$ satisfy $\operatorname{d}_F([\bG^\star],[\widehat{\bG}])=\Vert \bG^\star \widehat{\bQ}^\star-\widehat{\bG}\Vert_F$, 
it follows that
\begin{align}\label{pd-ineq-key}
  \max_{i\in[n]}\|\bC_{i,:}(\bG\widehat{\bQ}-\widehat{\bG})\|_F
  &\leq\max_{i\in[n]}\|\bG_i^\star \bG^{\star\top}(\bG\widehat{\bQ}-\widehat{\bG})\|_F+\max_{i\in[n]}\|\bm{\Delta}_i(\bG\widehat{\bQ}-\widehat{\bG})\|_F\notag\\
  &\leq\| \bG^{\star\top}(\bG\widehat{\bQ}-\widehat{\bG})\|_F+\|\bm{\Delta}\|\cdot \operatorname{d}_F([\bG],[\widehat{\bG}])\notag\\
  &\leq \frac{4\sqrt{d}\|\bm{\Delta}\|}{\sqrt{n}}\cdot\operatorname{d}_F([\bG], [\widehat{\bG}])+\frac{1}{2}\operatorname{d}_F([\bG], [\widehat{\bG}])^2 +\|\bm{\Delta}\|\cdot\operatorname{d}_F([\bG], [\widehat{\bG}])\notag\\
  &\leq\left(\frac{1}{2}\operatorname{d}_F([\bG], [\widehat{\bG}])+\frac{4\sqrt{d}\|\bm{\Delta}\|}{\sqrt{n}}+\|\bm{\Delta}\|\right) \cdot \operatorname{d}_F([\bG], [\widehat{\bG}]).
\end{align}
On the other hand, we know that
\begin{align}\label{pd-ineq-key-2}
\max_{i\in[n]}\|\bC_{i,:}\widehat{\bG}(\bG_i\widehat{\bQ}-\widehat{\bG}_i)^\top\|_F
&\leq \max_{i\in[n]}\|\bC_{i,:}\widehat{\bG}\|\cdot\max_{i\in[n]}\|\bG_i\widehat{\bQ}-\widehat{\bG}_i\|_F\notag\\
&\leq \left(\|\bG^{\star\top}\widehat{\bG}\|+\|\bm{\Delta}\widehat{\bG}\|_{\infty}\right)\cdot\|\bG\widehat{\bQ}- \widehat{\bG}\|_\infty\notag\\
&\leq \left(n+\|\bm{\Delta}\widehat{\bG}\|_{\infty}\right)\cdot \|\bG\widehat{\bQ}- \widehat{\bG}\|_\infty.
\end{align}
Thus, combining \eqref{pd-ineq} with \eqref{pd-ineq-key-0}, \eqref{pd-ineq-key} and \eqref{pd-ineq-key-2} we know that
\begin{align}\label{pd-conclus}
  &-\frac{1}{2}\left\langle\Hess f([\bG])[\bm{H}_{[\bG]}], \bm{H}_{[\bG]}\right\rangle\notag\\
  \ge&\  \Bigg( n-\frac{8d\|\bm{\Delta}\|^2}{n}-\|\bm{\Delta} \widehat{\bG}\|_{\infty}-\Vert\bm{\Delta}\Vert-\left(\frac{1}{2}\operatorname{d}_F([\bG], [\widehat{\bG}])+\frac{4\sqrt{d}\|\bm{\Delta}\|}{\sqrt{n}}+\|\bm{\Delta}\|\right) \cdot \operatorname{d}_F([\bG], [\widehat{\bG}])\notag\\
  & -\operatorname{d}_F([\bG],[\bG^{\star}])^{2}-(n+\bm{\|\Delta}\widehat{\bG}\|_{\infty})\cdot\|\bG\widehat{\bQ}- \widehat{\bG}\|_\infty\Bigg) \cdot\sum_i\Vert \bm{E}_i\Vert_F^2.
  \end{align}
Consequently, the inequality \eqref{pd-conclus} 
together with 
\begin{align*}
  \operatorname{d}_F([\bG],[\bG^\star]) \leq \operatorname{d}_F([\bG],[\widehat{\bG}]) + \operatorname{d}_F([\widehat{\bG}], [\bG^\star]) \leq \operatorname{d}_F([\bG],[\widehat{\bG}]) + \frac{4\sqrt{d}\|\bm{\Delta}\|}{\sqrt{n}},
\end{align*}
 and also
\begin{equation}\label{hat-star-trans}
\begin{aligned}
\|\bm{\Delta} \widehat{\bG}\|_{\infty} \leq \| \bm{\Delta} (\widehat{\bG} \widehat{\bQ}^{\star}-\bG^{\star})\|_{\infty}+\left\|\bm{\Delta} \bG^{\star}\right\|_{\infty} 
&\leq \| \bm{\Delta}\|\cdot \operatorname{d}_F([\widehat{\bG}], [\bG^{\star}])+\left\|\bm{\Delta} \bG^{\star}\right\|_{\infty} \\
&\leq \frac{4 \sqrt{d}\| \bm{\Delta}\|^2}{\sqrt{n}}+\left\|\bm{\Delta} \bG^{\star}\right\|_{\infty}
\end{aligned}
\end{equation}
by \eqref{dist-GstarGhat-ineq} completes the proof with the given assumptions.
\end{proof}

The following proposition shows that under the same noise level as in Theorem \ref{hess-positive}, the ground truth $[\bG^\star]\in\mathcal{Q}$ falls in the region that
the function $f$ is geodesically strongly concave.
\begin{prop}\label{prop-sol-in}
Suppose that $\|\bm{\Delta}\|\le \frac{n^{3/4}}{80d^{1/2}}$ and $\|\bm{\Delta} \bG^{\star}\|_{\infty}\le \frac{n}{40}$ with $n\ge2$. 
Then for any global maximizer $\widehat{\bG}\in\odn$ of \eqref{Problem0}, it satisfies 
\[
\operatorname{d}_F([\widehat{\bG}], [\bG^\star])\leq \frac{1}{2}\rho_F\quad \text{and}\quad \|\widehat{\bG}\widehat{\bQ}^\star- \bG^\star\|_\infty\leq \frac{1}{2}\rho_{\infty},
\] 
where $\rho_F$, $\rho_{\infty}$ are given by Theorem \ref{hess-positive}.
\end{prop}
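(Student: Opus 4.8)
The plan is to feed the least-squares maximizer $\widehat{\bG}$ into the two estimation bounds already established---Lemma~\ref{dist-GstarGhat} for the $\ell_2$ part and Proposition~\ref{lemma-distinf-ineq} (with its constant sharpened under the present, stronger hypotheses) for the $\ell_\infty$ part---and then verify by elementary arithmetic that the stronger noise level assumed here forces the two distances below $\frac12\rho_F$ and $\frac12\rho_\infty$. First one records that $\widehat{\bG}$ is an admissible input for both: as a global maximizer of \eqref{Problem0} it satisfies $\widehat{\bG}^\top\widehat{\bG}=n\bm I_d$ and, since $\bG^\star\in\sodn\subseteq\odn$ is feasible, $\operatorname{tr}(\widehat{\bG}^\top\bC\widehat{\bG})\ge\operatorname{tr}(\bG^{\star\top}\bC\bG^\star)$, so Lemma~\ref{dist-GstarGhat} applies; moreover $\|\bm{\Delta}\|\le \frac{n^{3/4}}{80\sqrt d}\le\frac{n}{80\sqrt d}\le\frac{n}{6\sqrt d}$, so Proposition~\ref{lemma-distinf-ineq} applies for $n\ge2$, and squaring the hypothesis gives $\|\bm{\Delta}\|^2\le\frac{n^{3/2}}{6400 d}$.

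For the $\ell_2$ claim, \eqref{dist-GstarGhat-ineq} gives $\operatorname{d}_F([\widehat{\bG}],[\bG^\star])\le \frac{4\sqrt d\,\|\bm{\Delta}\|}{\sqrt n}$, so it suffices to check this is at most $\frac1{20}\min\{\sqrt n, n/\|\bm{\Delta}\|\}=\frac12\rho_F$, i.e.\ both $\frac{4\sqrt d\,\|\bm{\Delta}\|}{\sqrt n}\le\frac{\sqrt n}{20}$ and $\frac{4\sqrt d\,\|\bm{\Delta}\|}{\sqrt n}\le\frac{n}{20\|\bm{\Delta}\|}$. The first reduces to $\|\bm{\Delta}\|\le\frac{n}{80\sqrt d}$, which holds since $n^{3/4}\le n$; the second reduces to $\|\bm{\Delta}\|^2\le\frac{n^{3/2}}{80\sqrt d}$, which holds since $\|\bm{\Delta}\|^2\le\frac{n^{3/2}}{6400 d}$ and $80\sqrt d\ge1$. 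This settles the first assertion.

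For the $\ell_\infty$ claim I would not quote the final constant ``$8/n$'' of Proposition~\ref{lemma-distinf-ineq}, which with $\|\bm{\Delta}\bG^\star\|_\infty\le\frac n{40}$ only gives a bound of $\frac15>\frac18=\frac12\rho_\infty$, but instead reuse the per-block inequality inside its proof: for each $t\in[n]$, using $\|\bm{\Delta}_{tt}\|=0$ and $\|\bC_{tt}\|\le1$, one has $\|\widehat{\bG}_t\widehat{\bQ}^\star-\bG^\star_t\|_F\le \frac{2\|\bm{\Delta}_{t,:}\widehat{\bG}\|_F}{\sigma_{\min}(\bG^{\star\top}\widehat{\bG})-1}$. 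The numerator is controlled by aligning $\widehat{\bG}$ to $\bG^\star$: $\|\bm{\Delta}_{t,:}\widehat{\bG}\|_F\le\|\bm{\Delta}\bG^\star\|_\infty+\|\bm{\Delta}\|\operatorname{d}_F([\widehat{\bG}],[\bG^\star])\le\frac n{40}+\frac{4\sqrt d\,\|\bm{\Delta}\|^2}{\sqrt n}\le\frac n{40}+\frac n{1600}=\frac{41n}{1600}$, using the $\ell_2$ bound just proved together with $\|\bm{\Delta}\|^2\le\frac{n^{3/2}}{6400d}$ and $\sqrt d\ge1$. The denominator is controlled by \eqref{eq: Gstarhatsingularvalue}: $\sigma_{\min}(\bG^{\star\top}\widehat{\bG})-1\ge n-\frac{8d\|\bm{\Delta}\|^2}{n}-1\ge n-1-\frac{\sqrt n}{800}\ge\frac{399n}{800}>0$ for $n\ge2$ (bounding $\frac{\sqrt n}{800}\le\frac{n}{800}$ and $1\le\frac n2$). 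Taking the maximum over $t$ then yields $\|\widehat{\bG}\widehat{\bQ}^\star-\bG^\star\|_\infty\le\frac{41n/800}{399n/800}=\frac{41}{399}<\frac18=\frac12\rho_\infty$, which is the second assertion.

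The only genuine obstacle is the $\ell_\infty$ step above: the clean statement of Proposition~\ref{lemma-distinf-ineq} is slightly too lossy to yield the factor $\frac12$ we need, so one must reopen its proof and exploit that, at the stronger noise level $\|\bm{\Delta}\|\le \frac{n^{3/4}}{80\sqrt d}$, the correction term $\frac{8d\|\bm{\Delta}\|^2}{n}$ is only $O(\sqrt n)$ rather than $O(n)$, hence the denominator $\sigma_{\min}(\bG^{\star\top}\widehat{\bG})-1$ is essentially $n$. Beyond that the argument is routine bookkeeping; care is needed only in keeping the alignment rotation $\widehat{\bQ}^\star$ (and the distinction between $\operatorname{d}_F$ and $\|\cdot\|_\infty$) consistent with the conventions of Lemma~\ref{dist-GstarGhat} and Proposition~\ref{lemma-distinf-ineq}, and in checking the handful of scalar inequalities for every $n\ge2$.
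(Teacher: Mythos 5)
Your proposal is correct and follows essentially the same route as the paper: the $\ell_2$ claim is read off from \eqref{dist-GstarGhat-ineq} plus arithmetic, and the $\ell_\infty$ claim is obtained from the intermediate per-block bound inside the proof of Proposition \ref{lemma-distinf-ineq} (the paper likewise invokes \eqref{conponentwise-con} in its sharper form $\frac{2\|\cdot\|}{\,n-8d\|\bm{\Delta}\|^2/n-1\,}$ rather than the lossy final constant $8/n$). Your treatment is in fact slightly more careful than the paper's, since you explicitly convert $\|\bm{\Delta}_{t,:}\widehat{\bG}\|_F$ into $\|\bm{\Delta}\bG^\star\|_\infty+\|\bm{\Delta}\|\operatorname{d}_F([\widehat{\bG}],[\bG^\star])$ (as in \eqref{hat-star-trans}), a step the paper's statement of \eqref{conponentwise-con} glosses over.
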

\begin{proof}
From \eqref{dist-GstarGhat-ineq} and \eqref{conponentwise-con} we know that
\begin{equation}\label{key-sol-in}
  \operatorname{d}_F([\widehat{\bG}],[\bG^\star])\leq   \frac{4\sqrt{d}\|\bm{\Delta}\|}{\sqrt{n}}\quad\text{and}\quad \|\widehat{\bG}\widehat{\bQ}^\star- \bG^\star\|_\infty\leq\frac{2\left\|\bm{\Delta} \bG^\star\right\|_{\infty}}{n-\frac{8d\|\bm{\Delta}\|^2}{n}-1}.
  \end{equation}
Note that
$\rho_F=\frac{1}{10}\min\left\{\sqrt{n},\frac{n}{\|\bm{\Delta}\|}\right\}$ and  $\rho_{\infty}=\frac{1}{4}$ are given by Theorem \ref{hess-positive}. Then we know from the assumption $\|\bm{\Delta}\|\le \frac{n^{3/4}}{80d^{1/2}}$ that $\|\bm{\Delta}\|\leq \min\left\{\frac{n}{80d^{1/2}},\frac{n^{3/4}}{10d^{1/4}}\right\}$, which implies that
\[
\frac{4\sqrt{d}\|\bm{\Delta}\|}{\sqrt{n}}\leq \frac{1}{20}\min\left\{\sqrt{n},\frac{n}{\|\bm{\Delta}\|}\right\}
\]
and consequently $\operatorname{d}_F([\widehat{\bG}],[\bG^\star])\leq \frac{1}{2}\rho_F$. 
In the meantime, from $\|\bm{\Delta}\|\le \frac{n^{3/4}}{80d^{1/2}}$ and $\|\bm{\Delta} \bG^{\star}\|_{\infty}\le \frac{n}{40}$ with $n\ge2$, one has that
\[
\frac{d}{n}\cdot\|\bm{\Delta}\|^2+2\left\|\bm{\Delta} \bG^\star\right\|_{\infty}\leq \frac{1}{8}(n-1),
\]
and then 
\[
\frac{2\left\|\bm{\Delta} \bG^\star\right\|_{\infty}}{n-\frac{8d\|\bm{\Delta}\|^2}{n}-1}\leq \frac{1}{8}.
\]
Thus, we have that
$\|\widehat{\bG}\widehat{\bQ}^\star- \bG^\star\|_\infty\leq \frac{1}{2}\rho_\infty$. The proof is complete.
\end{proof}

Next, we aim at obtaining the Riemannian local error bound property from the conclusion of Theorem \ref{hess-positive}, i.e., the function $f$ is geodesically strongly concave on the region $[\mathcal{N}]:=\{[\bG]\in \mathcal{Q} \mid \bG\in\mathcal{N}\}$ where
\[
\mathcal{N}:=\{\bG\in\odn \mid \operatorname{d}_F([\bG], [\widehat{\bG}])\leq \rho_F\ \text{and}\ \operatorname{d}_{\infty}([\bG], [\widehat{\bG}])\leq \rho_{\infty}\}.
\]
Without loss of generality, we assume the sets $\mathcal{N}$ and $[\mathcal{N}]$ are geodesically convex since we can alway scale the region by a constant letting the set contained by a geodesically convex set. It is more convenient for us to assume the geodesical convexity to promise the existence of a geodesic between any two points.

Now, 
we derive the following corollary showing that (Riemannian) local error bound property holds on not only the quotient manifold $\mathcal{Q}$ and but also the original manifold $\odn$ around the global maximizer $\widehat{\bG}$.


\begin{cor}[(Riemannian) Local Error Bound]\label{cor-EB}
Suppose that $\|\bm{\Delta}\|\le  \frac{n^{3/4}}{20d^{1/2}}$ and $\|\bm{\Delta} \bG^{\star}\|_{\infty}\le \frac{n}{20}$. Then for any $\bG\in\odn$ satisfying 
\[
\operatorname{d}_F([\bG], [\widehat{\bG}])\leq \rho_F\quad \text{and}\quad \|\bG\widehat{\bQ}-\widehat{\bG}\|_{\infty}\leq \rho_{\infty}
\]
with $\rho_F$, $\rho_\infty$ given by Theorem \ref{hess-positive}, it follows that 
\begin{equation}\label{errorbound}
\operatorname{d}_F([\bG],[\widehat{\bG}])\leq\operatorname{d}^{\mathcal{Q}}([\bG],[\widehat{\bG}])\le \frac{10}{n}\|\grad f([\bG])\|_{[\bG]}\le  \frac{10}{n}\|\operatorname{grad} \bar{f}(\bG)\|_F.
\end{equation}
\end{cor}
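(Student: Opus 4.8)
The plan is to read \eqref{errorbound} as a chain of three inequalities and handle the two outer ones by quotation and the middle one by a standard strong-concavity argument. Throughout I work on the quotient manifold $\mathcal{Q}$, recalling that $[\mathcal{N}]$ has been taken geodesically convex without loss of generality. Since the noise hypotheses of the corollary are exactly those of Theorem~\ref{hess-positive}, the estimate $-\langle \Hess f([\bG])[\bm{H}_{[\bG]}], \bm{H}_{[\bG]}\rangle \ge \frac{n}{5}\langle \bm{H}_{[\bG]}, \bm{H}_{[\bG]}\rangle$ holds at every point of $[\mathcal{N}]$, and because $[\mathcal{N}]$ is geodesically convex this pointwise Hessian bound upgrades (by integrating along geodesics, cf.\ \cite{boumal2023introduction}) to the statement that $f$ is geodesically $\frac{n}{5}$-strongly concave on $[\mathcal{N}]$.

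First I would dispose of the outer inequalities. The left one, $\operatorname{d}_F([\bG],[\widehat{\bG}]) \le \operatorname{d}^{\mathcal{Q}}([\bG],[\widehat{\bG}])$, is precisely the middle link of the chain in Lemma~\ref{lemma-dist-com} with $\bm{X}=\bG$, $\bm{Y}=\widehat{\bG}$. The right one is essentially an identity: by Lemma~\ref{grad-cal-lemma} the horizontal lift of $\grad f([\bG])$ at $\bG$ equals $\grad \bar f(\bG)$, and since the quotient metric is the pullback of the Euclidean metric through the horizontal lift, $\|\grad f([\bG])\|_{[\bG]} = \|\overline{\grad f([\bG])}_{\bG}\|_F = \|\grad \bar f(\bG)\|_F$, so the last inequality holds with equality. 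What remains is the middle inequality $\operatorname{d}^{\mathcal{Q}}([\bG],[\widehat{\bG}]) \le \frac{10}{n}\|\grad f([\bG])\|_{[\bG]}$.

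For this I would take a constant-speed minimal geodesic $\gamma:[0,1]\to[\mathcal{N}]$ with $\gamma(0)=[\bG]$ and $\gamma(1)=[\widehat{\bG}]$ — it exists because $[\mathcal{N}]$ is geodesically convex, $[\bG]\in[\mathcal{N}]$ by hypothesis and $[\widehat{\bG}]\in[\mathcal{N}]$ trivially — and write $r:=\operatorname{d}^{\mathcal{Q}}([\bG],[\widehat{\bG}])=\|\dot\gamma(0)\|_{[\bG]}$. The defining inequality of geodesic $\frac{n}{5}$-strong concavity applied to $\gamma$, followed by Cauchy--Schwarz, gives
\[
f([\widehat{\bG}]) \le f([\bG]) + \langle \grad f([\bG]), \dot\gamma(0)\rangle_{[\bG]} - \frac{n}{10}\, r^2 \le f([\bG]) + \|\grad f([\bG])\|_{[\bG]}\, r - \frac{n}{10}\, r^2 .
\]
Since $\widehat{\bG}$ is a global maximizer of $\bar f$ on $\odn$, $[\widehat{\bG}]$ is a global maximizer of $f$ on $\mathcal{Q}$, hence $f([\widehat{\bG}])\ge f([\bG])$; plugging this in and rearranging yields $\frac{n}{10}\, r^2 \le \|\grad f([\bG])\|_{[\bG]}\, r$, i.e.\ $r \le \frac{10}{n}\|\grad f([\bG])\|_{[\bG]}$ (the case $r=0$ being trivial). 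Chaining the three inequalities completes the proof.

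The main obstacle is the geometric bookkeeping that makes Theorem~\ref{hess-positive} applicable along the whole geodesic $\gamma$: one needs every intermediate point $\gamma(t)$ to satisfy both the $\ell_2$ proximity $\operatorname{d}_F(\gamma(t),[\widehat{\bG}])\le\rho_F$ and the $\ell_\infty$ proximity condition, the latter being delicate because it refers to the $t$-dependent optimal alignment $\widehat{\bQ}$. This is exactly the issue the paper has pre-empted by assuming $[\mathcal{N}]$ geodesically convex, so in the write-up I would simply invoke that assumption; making it fully rigorous would require an enlargement-of-radius argument combined with the distance comparisons of Lemma~\ref{lemma-dist-com}.
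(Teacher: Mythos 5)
Your proposal is correct and follows essentially the same route as the paper: both invoke Theorem \ref{hess-positive} together with the geodesic convexity of $[\mathcal{N}]$ to get geodesic strong concavity, apply the first-order strong-concavity inequality along a geodesic joining $[\bG]$ and $[\widehat{\bG}]$, use Cauchy--Schwarz and the global optimality of $[\widehat{\bG}]$ to extract $\|\dot\gamma(0)\|_{[\bG]}\le \tfrac{10}{n}\|\grad f([\bG])\|_{[\bG]}$, and close the chain with Lemma \ref{lemma-dist-com} and the identity $\|\grad f([\bG])\|_{[\bG]}=\|\grad\bar f(\bG)\|_F$ from Lemma \ref{grad-cal-lemma}. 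Your closing remark about the geodesic remaining in $[\mathcal{N}]$ is exactly the caveat the paper absorbs into its without-loss-of-generality convexity assumption, so nothing further is needed.
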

\begin{proof}
By Theorem~\ref{hess-positive} we know that the function $f$ is geodesically strongly concave on the region
 $[\mathcal{N}]$.
Then from the properties of differentiable geodesically strongly concave functions, there exists an absolute constant $c>0$ such that for any $[\bG]\in[\mathcal{N}]$, one has that
\[
-f([\widehat{\bG}]) \geq -f([\bG])-\left\langle\grad f([\bG]), \dot{\gamma}(0)\right\rangle_{[\bG]}+\frac{n}{10}\|\dot{\gamma}(0)\|_{[\bG]}^2,
\]
where $\gamma:[0,1] \rightarrow \mathcal{Q}$ is any geodesic segment connecting $[\widehat{\bG}]$ and $[\bG]$ contained in $[\mathcal{N}]$.
Thus,
\begin{align*}
\|\grad f([\bG])\|_{[\bG]}\|\dot{\gamma}(0)\|_{[\bG]}
&\ge \left\langle\grad f([\bG]), \dot{\gamma}(0)\right\rangle_{[\bG]}\\
&\geq f([\widehat{\bG}])-f([\bG])+\frac{n}{10}\|\dot{\gamma}(0)\|_{[\bG]}^2\\
&\ge \frac{n}{10}\|\dot{\gamma}(0)\|_{[\bG]}^2,
\end{align*}
and consequently $\|\dot{\gamma}(0)\|_{[\bG]}\leq  \frac{10}{n}\|\grad f([\bG])\|_{[\bG]}$. This together with the fact that $\operatorname{d}_F([\bG],[\widehat{\bG}])\leq\operatorname{d}^{\mathcal{Q}}([\bG],[\widehat{\bG}])\leq \|\dot{\gamma}(0)\|_{[\bG]}$ implies that
\begin{align*}
\operatorname{d}_F([\bG],[\widehat{\bG}])\leq\operatorname{d}^{\mathcal{Q}}([\bG],[\widehat{\bG}])
\leq  \frac{10}{n}\|\grad f([\bG])\|_{[\bG]}
&=\frac{10}{n}\|\overline{\operatorname{grad} f([\bG])}_{\bG}\|_F\notag\\
&=\frac{10}{n}\|\operatorname{Proj}_{\mathcal{H}_{\bG}}(\operatorname{grad} \bar{f}(\bG))\|_F\notag\\
&= \frac{10}{n}\|\operatorname{grad} \bar{f}(\bG)\|_F.
\end{align*}
The proof is complete.
\end{proof}


\begin{remark}
The tolerance of noise level for the local error bound result of the rotation group $\sod$ synchronization problem is improved from $\|\bm{\Delta}\|\lesssim \sqrt{n/d}$ \cite[Proposition 4.5, Theorem 4.3]{zhu2021orthogonal} (which only allow constant noise level when specialized to Gaussian noise) to $\|\bm{\Delta}\|\lesssim n^{3/4}/\sqrt{d}$ in Corollary \ref{cor-EB} deterministically. Near-optimal bound $\|\bm{\Delta}\|\lesssim n/\sqrt{d\log n}$ can be obtain by leave-one-out technique under statistical models, e.g., Gaussian noise setting \cite{zhong2018near,ling2022improved}.
\end{remark}
\begin{remark}
From \cite[Theorem 4.3]{zhu2021orthogonal} we know that when 
$\operatorname{d}_F([\bG], [\bG^{\star}])=\mathcal{O}(\sqrt{n})$ (i.e., $\operatorname{d}_F([\bG], [\widehat{\bG}])=\mathcal{O}(\sqrt{n})$) is sufficient to guarantee the error bound with the residual function related to the fixed point (FP) of the GPM  \cite[Remark 4.4]{zhu2021orthogonal}. However, in Theorem \ref{hess-positive} and Corollary \ref{cor-EB}, besides the tighter requirement $\operatorname{d}_F([\bG], [\widehat{\bG}])=\mathcal{O}(\min\{\sqrt{n},n/\|\bm{\Delta}\|\})$, we need extra assumption that $\operatorname{d}_{\infty}([\bG], [\widehat{\bG}])=\mathcal{O}(1)$
to promise the local error bound \eqref{errorbound} with the characterization of first-order critical point (FOCP) as a residual function. This is consistent with the landscape of orthogonal group sychronization problem that a FP is always a FOCP (cf. \cite[Section 3]{zhu2021orthogonal}) which implies that the quantity of FOCPs are larger than the FPs. Also, we give the following example to show that such additional assumption $\operatorname{d}_{\infty}([\bG], [\widehat{\bG}])=\mathcal{O}(1)$ is necessary.

\begin{example}
Let $d=2$ and $\bm{\Delta}=\bm{0}$, and then $\bG^{\star}=\widehat{\bG}$. Take $\bG\in\odn$  for $i\in[n]$ satisfying that
\[ 
  \bG_{i}=\left\{
  \begin{array}{c@{\quad}l}
  \displaystyle -\widehat{\bG}_{i}, & \textrm{if } i=1, \\
  \noalign{\smallskip}
  \widehat{\bG}_{i}, & \textrm{otherwise}.
  \end{array}
  \right.
\]
Then we know that $\operatorname{grad} \bar{f}(\bG)=S(\bG)\bG=\bm{0}$ since 
\[
S(\bG)=(n-2)\cdot \operatorname{Diag} \left( \left[-\bm{I}_2;\bm{I}_2;\bm{I}_2;\ldots;\bm{I}_2 \right] \right)-\widehat{\bG} \widehat{\bG}^\top.
\]
The global optimum is unique up to rotation while $\operatorname{d}_F([\bG], [\widehat{\bG}])=\sqrt{2}$, which means that $\bG$ is only a FOCP.
\end{example}

\begin{figure}[h]
    \centering
    \includegraphics[width=0.5\textwidth]{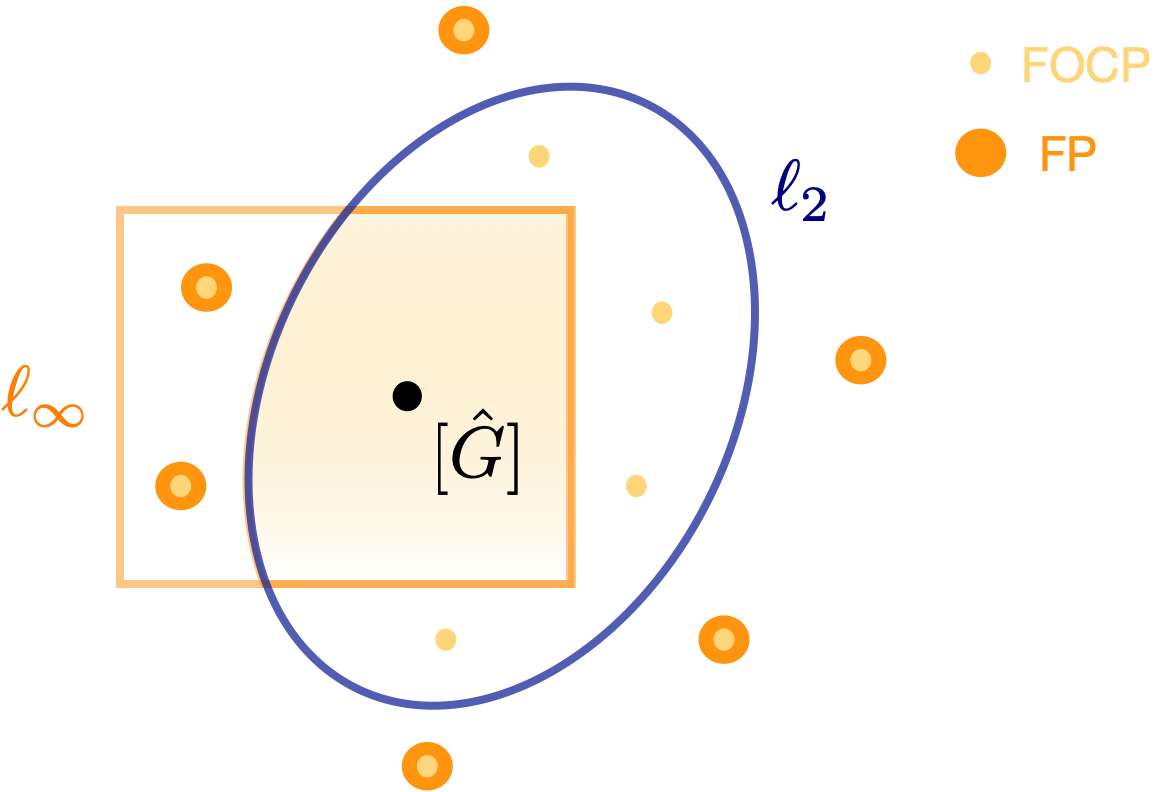}
    \caption{Relationship among geodesically strongly concave region, FPs and FOCPs.
    }
    \label{fig:FPFOCP}
\end{figure}

\end{remark}

\section{Riemannian Gradient Method with Convergence Analysis}
\label{section-rie-algos}

In this section, we will investigate the (quotient) Riemannian  gradient method 
with its convergence analysis for solving \eqref{Problem0}/\eqref{Problem}.
To begin with, we propose the following (quotient) Riemannian gradient method on $\odn$.
\begin{algorithm}[H] 
  \caption{(Quotient) Riemannian Gradient Method}
  \begin{algorithmic}[1]\label{alg: rgd}
    \STATE {\bf Input}: The matrix $\bC$, 
    the stepsize $t_k\ge 0$ and initial point $\bG^0\in\ogn$.\\
    \FOR{$k=0,1,\ldots$}
    \STATE  $\bG^{k+1}:=\overline{\Exp}_{\bG^k}(t_k \operatorname{grad} \bar{f}(\bG^k))$
    \ENDFOR
  \end{algorithmic}
\end{algorithm}
From Lemma \ref{grad-cal-lemma} and Remark \ref{grad-equal} we know that for any $[\bG] \in \mathcal{Q}$ and $\bG \in \pi^{-1}([\bG])$, the unique horizontal lift of the Riemannian gradient $\grad f([\bG])$ of $f$ at $\bG \in \odn$ is given by
\[
\overline{\grad f([\bG])}_{\bG}
=\grad \bar{f}(\bG),
\]
and the exponential map $\Exp$ has the form
\[
\Exp_{[\bG]}(\bm{\xi}_{[\bG]}):=\pi (\overline{\Exp}_{\bG}(\bar{\bm{\xi}}_{\bG})),\quad \text{for each}\ \bm{\xi}_{[\bG]} \in \operatorname{T}_{[\bG]} \mathcal{Q},
\]
where $\bar{\bm{\xi}}_{\bar{\bG}} \in \mathcal{H}_{\bar{\bG}}$ is the horizontal lift of a $\bm{\xi}_{[\bG]} \in \operatorname{T}_{[\bG]} \mathcal{Q}$ at $\bar{\bG}$. Thus, the above Algorithm \ref{alg: rgd} can also be viewed as quotient Riemannian gradient method on the quotient manifold $\mathcal{Q}$.

Now, we are going to prove the sufficient ascent and cost-to-go estimation properties which play an important role to obtain the convergence result of Algorithm \ref{alg: rgd}.
\begin{prop} \label{pro: 3conditioner}
  Let $\alpha\in(0,1)$. Suppose that 
  \begin{equation}\label{step-req}
  0<t_k \leq \frac{1-\alpha}{n(d+1)+\|\bm{\Delta}\|+\sqrt{d}\cdot\|\bm{\Delta} \bG^k\|_{\infty}}.
  \end{equation}
  Then the sequence $\{\bG^{k}\}_{k \geq 0}$  generated by Algorithm \ref{alg: rgd} 
  satisfies:
  \begin{itemize}
  \item[{\rm (i)}] {\rm (Sufficient Ascent)} $f([\bG^{k+1}])-f([\bG^{k}]) \geq \alpha t_k \|\operatorname{grad} \bar{f}(\bG^k)\|_{F}^{2}$;
  \item[{\rm (ii)}] {\rm (Cost-to-Go Estimate)} $f([\widehat{\bG}])-f([\bG^{k}]) \le  (2n+\| \bm{\Delta} \|+ \|  \bm{\Delta}\widehat{\bG} \|_\infty) \cdot \operatorname{d}_F([\bG^{k}], [\widehat{\bG}])^{2}$.
  \end{itemize}
  \end{prop}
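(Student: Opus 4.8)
The plan is to prove the two estimates by exploiting the quantitative properties of the exponential map on $\odn$ from Lemma~\ref{lemma-exp-lip} together with the explicit gradient formula $\grad\bar f(\bG)=-2S(\bG)\bG$ from Lemma~\ref{grad-cal-lemma}. For the sufficient ascent part~(i), I would start from the second-order expansion of $\bar f$ along the geodesic $t\mapsto\overline\Exp_{\bG^k}(t\,\grad\bar f(\bG^k))$. Writing $\bm\xi^k:=t_k\grad\bar f(\bG^k)$ and using \eqref{retr-key-ineq-2} to replace $\overline\Exp_{\bG^k}(\bm\xi^k)$ by $\bG^k+\bm\xi^k$ up to an error of order $\tfrac12\|\bm\xi^k\|_F^2$, expand $\bar f(\bG^k+\bm\xi^k)=\bar f(\bG^k)+\langle\grad\tilde f(\bG^k),\bm\xi^k\rangle+\langle\bC\bm\xi^k,\bm\xi^k\rangle$ exactly (since $\tilde f$ is quadratic). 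The linear term is $\langle 2\bC\bG^k,\bm\xi^k\rangle$; because $\bm\xi^k$ lies in the tangent (indeed horizontal) space, this equals $\langle\grad\bar f(\bG^k),\bm\xi^k\rangle=t_k\|\grad\bar f(\bG^k)\|_F^2$. The remaining terms — the quadratic term $\langle\bC\bm\xi^k,\bm\xi^k\rangle$, the exponential-map correction multiplied by $\|\grad\tilde f(\bG^k)\|$, and a bound on $\|\bC\|$ — are each controlled by a constant times $\|\bm\xi^k\|_F^2=t_k^2\|\grad\bar f(\bG^k)\|_F^2$; collecting them gives $f([\bG^{k+1}])-f([\bG^k])\ge(t_k-C_kt_k^2)\|\grad\bar f(\bG^k)\|_F^2$, where $C_k$ is the bracketed quantity in \eqref{step-req}. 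The stepsize restriction then forces $t_k-C_kt_k^2\ge\alpha t_k$, which is exactly~(i). The key quantitative inputs here are $\|\bC\|\le\|\bG^\star\bG^{\star\top}\|+\|\bm\Delta\|=n+\|\bm\Delta\|$ and a row-wise bound on $\|\grad\bar f(\bG^k)\|$ in terms of $\|\bm\Delta\bG^k\|_\infty$, $\|\bm\Delta\|$ and $n$, which is where the $n(d+1)+\|\bm\Delta\|+\sqrt d\|\bm\Delta\bG^k\|_\infty$ denominator comes from.

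For the cost-to-go estimate~(ii), I would bound $f([\widehat\bG])-f([\bG^k])=\bar f(\widehat\bG)-\bar f(\bG^k)$ directly in the ambient space. Let $\widehat\bQ^k\in\od$ attain $\operatorname{d}_F([\bG^k],[\widehat\bG])=\|\bG^k\widehat\bQ^k-\widehat\bG\|_F$, set $\bm R:=\widehat\bG-\bG^k\widehat\bQ^k$, so $\|\bm R\|_F=\operatorname{d}_F([\bG^k],[\widehat\bG])$. Since $\bar f$ is invariant under right multiplication by $\od$, $\bar f(\bG^k)=\bar f(\bG^k\widehat\bQ^k)=\operatorname{tr}((\bG^k\widehat\bQ^k)^\top\bC(\bG^k\widehat\bQ^k))$, and expanding $\bar f(\widehat\bG)=\bar f(\bG^k\widehat\bQ^k+\bm R)$ gives
\[
\bar f(\widehat\bG)-\bar f(\bG^k)=2\operatorname{tr}\!\big((\bG^k\widehat\bQ^k)^\top\bC\,\bm R\big)+\operatorname{tr}(\bm R^\top\bC\,\bm R).
\]
The second term is at most $\|\bC\|\,\|\bm R\|_F^2\le(n+\|\bm\Delta\|)\operatorname{d}_F([\bG^k],[\widehat\bG])^2$. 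For the first term I would use that $\widehat\bG$ is a global maximizer of \eqref{Problem0}, hence $\grad\bar f(\widehat\bG)=-2S(\widehat\bG)\widehat\bG=0$, i.e. $\bC\widehat\bG=\operatorname{symblockdiag}(\bC\widehat\bG\widehat\bG^\top)\widehat\bG$; substituting $\widehat\bG=\bG^k\widehat\bQ^k+\bm R$ inside $\operatorname{tr}((\bG^k\widehat\bQ^k)^\top\bC\,\bm R)$ and using first-order optimality of $\widehat\bG$ turns the inner product into something proportional to $\|\bm R\|_F^2$, with constant of order $n+\|\bm\Delta\widehat\bG\|_\infty$ (using $\|\bC\widehat\bG\|_\infty\le\|\bG^\star\bG^{\star\top}\widehat\bG\|_\infty+\|\bm\Delta\widehat\bG\|_\infty\le n+\|\bm\Delta\widehat\bG\|_\infty$, since $\|\bG^\star\|$-type blocks have operator norm $n$). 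Summing the two pieces yields the coefficient $2n+\|\bm\Delta\|+\|\bm\Delta\widehat\bG\|_\infty$ in~(ii).

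The main obstacle I anticipate is making the first-order-optimality manipulation in~(ii) clean: one must carefully use that $\operatorname{symblockdiag}(\bC\widehat\bG\widehat\bG^\top)\widehat\bG=\bC\widehat\bG$ (equivalently $S(\widehat\bG)\widehat\bG=0$) to rewrite $\operatorname{tr}((\bG^k\widehat\bQ^k)^\top\bC\bm R)=\operatorname{tr}((\widehat\bG-\bm R)^\top\bC\bm R)$ and then peel off the optimality term, controlling the leftover by a block-wise operator-norm bound on $\operatorname{symblockdiag}(\bC\widehat\bG\widehat\bG^\top)$. A secondary subtlety is checking that the geodesic expansion in~(i) is genuinely exact for the quadratic $\tilde f$ and that the cross terms really do vanish because $\grad\bar f(\bG^k)$ is tangent — i.e. $\operatorname{symblockdiag}((\grad\bar f(\bG^k))(\bG^k)^\top)=0$ — so no spurious first-order error survives. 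Both are routine once the bookkeeping is set up, but they are where an incautious argument would lose the sharp constants.
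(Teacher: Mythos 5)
Your proposal is correct and, for part (i), follows essentially the same route as the paper: an ambient-space expansion of the quadratic $\tilde f$ along $\bG^k+t_k\grad\bar f(\bG^k)$, with the two exponential-map estimates \eqref{retr-key-ineq-1}--\eqref{retr-key-ineq-2} absorbing the retraction error, tangency of $\grad\bar f(\bG^k)$ giving the clean linear term $t_k\|\grad\bar f(\bG^k)\|_F^2$, and the bounds $\|\bC\|\le n+\|\bm{\Delta}\|$, $\|\nabla\tilde f(\bG)\|_\infty\le 2n\sqrt d+2\|\bm{\Delta}\bG\|_\infty$ producing exactly the denominator in \eqref{step-req}. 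For part (ii) you take a mildly different but equivalent path: the paper invokes $f([\widehat{\bG}])=\sum_i\|\bC_{i,:}\widehat{\bG}\|_*$ and the polar-decomposition lemmas of \cite{zhu2021orthogonal} to write $f([\widehat{\bG}])-f([\bG^k])=\operatorname{tr}\bigl((\bG^k\widehat{\bQ}^k-\widehat{\bG})^\top D(\widehat{\bG})(\bG^k\widehat{\bQ}^k-\widehat{\bG})\bigr)$, whereas you derive the same quadratic-form identity from the exact expansion of $\tilde f$ plus the stationarity condition $S(\widehat{\bG})\widehat{\bG}=\bm{0}$: writing $\bm{\Lambda}:=\operatorname{symblockdiag}(\bC\widehat{\bG}\widehat{\bG}^\top)$, the cross term $2\operatorname{tr}(\widehat{\bG}^\top\bm{\Lambda}\bm{R})$ equals $\operatorname{tr}(\bm{R}^\top\bm{\Lambda}\bm{R})$ because $\bm{\Lambda}$ is symmetric block-diagonal and the blocks of $\widehat{\bG}$ and $\bG^k\widehat{\bQ}^k$ are orthogonal, so the difference collapses to $\operatorname{tr}(\bm{R}^\top(\bm{\Lambda}-\bC)\bm{R})$ with $\bm{\Lambda}-\bC=D(\widehat{\bG})$ at the maximizer. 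Your version is more self-contained (it needs only first-order optimality, not the PSD/nuclear-norm characterization of the maximizer), at the cost of the symmetrization bookkeeping you correctly flag; both yield the coefficient $2n+\|\bm{\Delta}\|+\|\bm{\Delta}\widehat{\bG}\|_\infty$.
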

  \begin{proof}
    Let $\bG^{k}_{+}:=\bG^{k}+t_k \operatorname{grad} \bar{f}(\bG^k)$. By the definition of $\tilde{f}$ we know that for any $\bm{X}, \bm{Y}\in\mathbb{R}^{nd\times d}$ 
\[
\|\nabla \tilde{f}(\bm{X})-\nabla\tilde{f}(\bm{Y})\|_F=2\|\bC(\bm{X}-\bm{Y})\|_F\leq 2\|\bC\|\cdot\|\bm{X}-\bm{Y}\|_F,
\]
where $\|\bC\|=\|\bG^\star \bG^{\star\top}+\bm{\Delta}\|\leq n+\|\bm{\Delta}\|$. Thus, $\nabla \tilde{f}$ is
$L$-Lipschitz continuous with $L:=2n+2\|\bm{\Delta}\|$, and consequently 
    \begin{align}\label{descent-ineq}
    &f([\bG^{k}])-f([\bG^{k+1}])\notag\\
    =&\ \tilde{f}(\bG^{k})-\tilde{f}\left(\overline{\Exp}_{\bG^{k}}(t_k \operatorname{grad} \bar{f}(\bG^k))\right)\notag\\
   \leq &\ \left\langle\nabla \tilde{f}(\bG^{k}), \bG^{k}-\overline{\Exp}_{\bG^{k}}(t_k \operatorname{grad} \bar{f}(\bG^k))\right\rangle+\frac{L}{2}\left\|\overline{\Exp}_{\bG^{k}}(t_k \operatorname{grad} \bar{f}(\bG^k))-\bG^{k}\right\|_{F}^{2} \notag\\
    = &\ \left\langle\nabla \tilde{f}(\bG^{k}), \bG^{k}-\bG^{k}_{+}+\bG^{k}_{+}-\overline{\Exp}_{\bG^{k}}(t_k \operatorname{grad} \bar{f}(\bG^k))\right\rangle+\frac{L}{2}\left\|\overline{\Exp}_{\bG^{k}}(t_k \operatorname{grad} \bar{f}(\bG^k))-\bG^{k}\right\|_{F}^{2} \notag\\
    \leq &\ -t_k\left\langle\nabla \tilde{f}(\bG^{k}), \operatorname{grad} \bar{f}(\bG^k)\right\rangle+\sqrt{d}\cdot\|\nabla \tilde{f}(\bG^{k})\|_{\infty}\left\|\overline{\Exp}_{\bG^{k}}(t_k \operatorname{grad} \bar{f}(\bG^k))-\bG^{k}_{+}\right\|_F\notag\\
    &+\frac{L}{2}\left\|\overline{\Exp}_{\bG^{k}}(t_k \operatorname{grad} \bar{f}(\bG^k))-\bG^{k}\right\|_{F}^{2}\notag\\
    \leq &\ -t_k\| \operatorname{grad} \bar{f}(\bG^k)\|_F^2+\frac{\sqrt{d}}{2}\|\nabla \tilde{f}(\bG^{k})\|_{\infty}\|t_k \operatorname{grad} \bar{f}(\bG^k)\|_{F}^{2}+\frac{L}{2}\|t_k \operatorname{grad} \bar{f}(\bG^k)\|_{F}^{2},
    \end{align}
    where the last inequality follows from \eqref{retr-key-ineq-1} and \eqref{retr-key-ineq-2}.
    From the definition of $\nabla \tilde{f}$ it follows that 
    \[\|\nabla \tilde{f}(\bG)\|_{\infty} =2\|\bC\bG\|_{\infty}\leq 2\|\bG^{\star\top} \bG\|_F+2\|\bm{\Delta}\bG\|_{\infty} \leq 2n\sqrt{d}+2\|\bm{\Delta} \bG\|_{\infty}\quad \text{for each}\ \bG \in \odn.
    \] 
    This together with \eqref{descent-ineq} implies that
    \begin{align*}
    &f([\bG^{k}])-f([\bG^{k+1}]) \notag\\
    \leq &\ -t_k\| \operatorname{grad} \bar{f}(\bG^k)\|_F^2+\left(nd+\sqrt{d}\cdot\|\bm{\Delta} \bG^k\|_{\infty}+\frac{L}{2}\right)\|t_k \operatorname{grad} \bar{f}(\bG^k)\|_{F}^{2}\notag\\
    = &\ \left(\left(n(d+1)+\|\bm{\Delta}\|+\sqrt{d}\cdot\|\bm{\Delta} \bG^k\|_{\infty}  \right)t_k-1\right)t_k \|\operatorname{grad} \bar{f}(\bG^k)\|_{F}^{2}.
    \end{align*}
   Then we conclude 
    that for any $0<t_k \leq \frac{1-\alpha}{n(d+1)+\|\bm{\Delta}\|+\sqrt{d}\cdot\|\bm{\Delta} \bG^k\|_{\infty}}$,
    \[
    f([\bG^{k}])-f([\bG^{k+1}])\leq -\alpha t_k\|\operatorname{grad} \bar{f}(\bG^k)\|_{F}^{2},
    \]
   which completes
   proof of the conclusion (i).
The analysis for (ii) is similar to the \cite[Proposition 5.4(b)]{zhu2021orthogonal}. For the sake of completeness, we present the proof here. Assume that for all $k \geq 0$, $\widehat{\bQ}^k\in \mathcal{O}(d)$ satisfies $\operatorname{d}_F([\bG^k],[\widehat{\bG}])=\Vert \bG^k \widehat{\bQ}^k-\widehat{\bG} \Vert_F$. 
Let $D:\odn \rightarrow \mathbb{S}^{nd}$ be defined as
  \begin{equation} \label{eq: defineDG}
        D(\bG):=\operatorname{Diag} \left( \left[\bm{U}_{\bC_{1,:} \bG}\bm{\Sigma}_{\bC_{1,:} \bG}\bm{U}_{\bC_{1,:} \bG}^\top;\ldots;\bm{U}_{\bC_{n,:} \bG}\bm{\Sigma}_{\bC_{n,:} \bG}\bm{U}_{\bC_{n,:} \bG}^\top \right] \right)-\bC.
  \end{equation} 
From \cite[Lemma 3.6, Lemma 3.3(b)]{zhu2021orthogonal} 
we know that 
$D(\widehat{\bG})\widehat{\bG}=\bm{0}$. This together with
\cite[Lemma 3.3(d)]{zhu2021orthogonal} shows that
  \begin{align*}
  f([\widehat{\bG}])-f([\bG^{k}])
  &=\operatorname{tr}(\widehat{\bG}^{\top} \bC \widehat{\bG})-\operatorname{tr}(\bG^{k\top} \bC \bG^k)=\sum\limits_{i=1}^n \Vert {\bC_{i,:}} \widehat{\bG}\Vert_*-\operatorname{tr}(\bG^{k\top} \bC \bG^k)\notag\\
  &=\operatorname{tr}\left(\bG^{k\top} \left[\operatorname{Diag}\left(\left[\bm{U}_{\bC_{1,:} \widehat{\bG}}\bm{\Sigma}_{\bC_{1,:} \widehat{\bG}}\bm{U}_{\bC_{1,:} \widehat{\bG}}^\top;\dots;\bm{U}_{\bC_{n,:} \widehat{\bG}}\bm{\Sigma}_{\bC_{n,:} \widehat{\bG}}\bm{U}_{\bC_{n,:} \widehat{\bG}}^\top\right]\right)-\bC\right] \bG^{k} \right)\notag\\
  &=\operatorname{tr}\left((\bG^{k}\widehat{\bQ}^k-\widehat{\bG})^{\top} D(\widehat{\bG})(\bG^{k}\widehat{\bQ}^k-\widehat{\bG})\right) \notag\\
  &\leq\left(\|\bC\|+\max_{i\in[n]}\|\bC_{i,:} \widehat{\bG}\|\right)\cdot \operatorname{d}_F([\bG^{k}], [\widehat{\bG}])^{2},
  \end{align*}
where 
\begin{align*}
   & \| \bC \| +  \max_{i\in[n]}\|\bC_{i,:} \widehat{\bG}\|  \leq n + \| \bm{\Delta} \|  + \max_{i\in[n]}\| \bG^\star_i \bG^{\star \top}\widehat{\bG} \|  + \|  \bm{\Delta}\widehat{\bG} \|_\infty   
      \leq 2n+\| \bm{\Delta} \|+ \|  \bm{\Delta}\widehat{\bG} \|_\infty.
  \end{align*} 
The proof is complete.
\end{proof}

Now we are ready to state the convergence theorem of Algorithm \ref{alg: rgd} as the main result for this section.

\begin{theorem}[Convergence Theorem] \label{thm:convergence-rate}
Suppose that $\|\bm{\Delta}\|\le \frac{n^{3/4}}{20d^{1/2}}$, $\|\bm{\Delta} \bG^{\star}\|_{\infty}\le \frac{n}{20}$.
  Suppose further that the sequence $\{ \bG^k\}_{k\geq 0}$ generated by Algorithm \ref{alg: rgd} satisfies that for $k\ge0$,
  \begin{equation*}
  0<\underline{t}\leq t_k \leq \frac{1-\alpha}{n(d+1)+\|\bm{\Delta}\|+\sqrt{d}\cdot\|\bm{\Delta} \bG^k\|_{\infty}}
  \end{equation*}
with $\alpha\in(0,1)$ for some $\underline{t}>0$, and
\begin{equation}\label{stay-in-ball-requ}
\operatorname{d}_F([\bG^k], [\widehat{\bG}])\leq 
\rho_F,
 \quad \|\bG^k\bQ^k-\widehat{\bG}\|_\infty \leq \rho_\infty,
\end{equation}
where $\bQ^k\in \mathcal{O}(d)$ satisfies $\operatorname{d}_F([\bG^k],[\widehat{\bG}])=\Vert \bG^k \bQ^k-\widehat{\bG} \Vert_F$ and $\rho_F$, $\rho_{\infty}$ are given by Theorem \ref{hess-positive}.
  Then the sequence $\{f([\bG^k])\}_{k\ge0}$ converges Q-linearly to $f([\widehat{\bG}])$, i.e., 
  \begin{align*}
    f([\widehat{\bG}]) - f([\bG^k]) \leq \left(f([\widehat{\bG}]) - f([\bG^0]) \right) \lambda^k, \quad  \text{for each $k\ge0$},
  \end{align*}
  and the sequence $\{\bG^k\}_{k\geq 0}$ converges R-linearly to some $\bG^*\in[\widehat{\bG}]$, i.e.,
  \begin{align*}
    \operatorname{d}_F([\bG^k],[\widehat{\bG}])\leq
\|\bG^{k}-\bG^*\|_F \leq a \left(f([\widehat{\bG}]) - f([\bG^0])\right)^{\frac{1}{2}} \lambda^{\frac{k}{2}}, \quad  \text{for each $k\ge0$}, 
  \end{align*}
  where 
$a > 0$, $\lambda \in (0,1)$ are constants that depend only on $n$ and $d$.
\end{theorem}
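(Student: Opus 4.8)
The plan is to combine three facts already established: the sufficient ascent and cost-to-go estimates of Proposition~\ref{pro: 3conditioner}, and the Riemannian local error bound of Corollary~\ref{cor-EB} — the latter being applicable at \emph{every} iterate because hypothesis~\eqref{stay-in-ball-requ} is exactly the hypothesis of Corollary~\ref{cor-EB}. First I would set $h_k:=f([\widehat{\bG}])-f([\bG^k])$; since $\widehat{\bG}$ is a global maximizer of \eqref{Problem0} we have $h_k\ge0$, and sufficient ascent gives $f([\bG^{k+1}])\ge f([\bG^k])$, so $\{h_k\}$ is nonincreasing. Then I would chain
\[
h_k-h_{k+1}=f([\bG^{k+1}])-f([\bG^k])\ \ge\ \alpha\underline t\,\|\grad\bar f(\bG^k)\|_F^2\ \ge\ \frac{\alpha\underline t\,n^2}{100}\,\operatorname{d}_F([\bG^k],[\widehat{\bG}])^2\ \ge\ \frac{\alpha\underline t\,n^2}{100\bigl(2n+\|\bm{\Delta}\|+\|\bm{\Delta}\widehat{\bG}\|_\infty\bigr)}\,h_k,
\]
using $t_k\ge\underline t$ and Proposition~\ref{pro: 3conditioner}(i), then Corollary~\ref{cor-EB}, then Proposition~\ref{pro: 3conditioner}(ii). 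This gives $h_{k+1}\le\lambda h_k$ with $\lambda:=1-\frac{\alpha\underline t\,n^2}{100(2n+\|\bm{\Delta}\|+\|\bm{\Delta}\widehat{\bG}\|_\infty)}$, and a routine check — using $\underline t\le t_k\le\frac{1-\alpha}{n(d+1)}$ together with $\|\bm{\Delta}\|,\|\bm{\Delta}\widehat{\bG}\|_\infty=\mathcal{O}(n)$ under the standing noise assumptions (via \eqref{hat-star-trans}) — shows $\lambda\in(0,1)$ and depends only on $n,d$ (once $\alpha,\underline t$ are fixed). Iterating yields the claimed Q-linear estimate $h_k\le\lambda^k h_0$.

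For the iterate sequence I would rearrange sufficient ascent to get $\|\grad\bar f(\bG^k)\|_F^2\le(h_k-h_{k+1})/(\alpha\underline t)\le h_0\lambda^k/(\alpha\underline t)$, and then use the retraction bound~\eqref{retr-key-ineq-1}:
\[
\|\bG^{k+1}-\bG^k\|_F=\|\overline{\Exp}_{\bG^k}(t_k\grad\bar f(\bG^k))-\bG^k\|_F\le t_k\|\grad\bar f(\bG^k)\|_F\le \bar t\sqrt{h_0/(\alpha\underline t)}\,\lambda^{k/2},
\]
with $\bar t:=\frac{1-\alpha}{n(d+1)}$. Since $\sum_k\lambda^{k/2}<\infty$, $\{\bG^k\}$ is Cauchy in $\mathbb{R}^{nd\times d}$, hence converges to some $\bG^*\in\odn$ ($\odn$ is closed), and summing the geometric tail gives $\|\bG^k-\bG^*\|_F\le a\,h_0^{1/2}\lambda^{k/2}$ with $a:=\bar t/((1-\sqrt\lambda)\sqrt{\alpha\underline t})$, depending only on $n,d$. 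To identify $\bG^*$ I would invoke the error bound once more: $\operatorname{d}_F([\bG^k],[\widehat{\bG}])^2\le\frac{100}{n^2}\|\grad\bar f(\bG^k)\|_F^2\le\frac{100\,h_k}{\alpha\underline t\,n^2}\to0$, so by continuity of $\operatorname{d}_F(\cdot,[\widehat{\bG}])$ one gets $\operatorname{d}_F([\bG^*],[\widehat{\bG}])=0$, i.e.\ $\bG^*\in[\widehat{\bG}]$; consequently $\operatorname{d}_F([\bG^k],[\widehat{\bG}])=\operatorname{d}_F([\bG^k],[\bG^*])\le\|\bG^k-\bG^*\|_F$, which supplies the leftmost inequality in the stated R-linear bound.

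The constant bookkeeping and the verification that $\lambda\in(0,1)$ (needed so the geometric tail converges) are routine. The one genuinely non-trivial point is promoting convergence of the function values and of the equivalence classes to convergence of the \emph{sequence of iterates} $\{\bG^k\}$ to a single representative $\bG^*$; this is why the telescoping must be run at the level of $\|\bG^{k+1}-\bG^k\|_F$ through \eqref{retr-key-ineq-1} rather than through $\operatorname{d}_F$ or $\operatorname{d}^{\mathcal{Q}}$, and why one then has to pin $\bG^*$ down inside $[\widehat{\bG}]$ using $\operatorname{d}_F([\bG^k],[\widehat{\bG}])\to0$. Everything else rests on the already-proven Propositions~\ref{pro: 3conditioner} and Corollary~\ref{cor-EB} and the standing assumption~\eqref{stay-in-ball-requ} that the iterates remain in the effective region.
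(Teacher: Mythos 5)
Your proposal is correct and follows essentially the same route as the paper: sufficient ascent plus the local error bound plus the cost-to-go estimate give the one-step contraction of $f([\widehat{\bG}])-f([\bG^k])$, and then \eqref{retr-key-ineq-1} lets you telescope $\|\bG^{k+1}-\bG^k\|_F$ to obtain a Cauchy sequence of iterates with the R-linear rate. Your identification of the limit $\bG^*$ inside $[\widehat{\bG}]$ via $\operatorname{d}_F([\bG^k],[\widehat{\bG}])\to 0$ and continuity is slightly more explicit than the paper's, but the substance is identical.
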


\begin{proof}
Since all assumptions are satisfied in
  Corollary \ref{cor-EB},
   we know for all $k\ge0$ that 
  \begin{equation}\label{EB-use-key}
  \operatorname{d}_F([\bG^k],[\widehat{\bG}])\leq  \frac{10}{n}\|\grad f([\bG^k])\|_{[\bG^k]}\le  \frac{10}{n}\|\operatorname{grad} \bar{f}(\bG^k)\|_F.
  \end{equation}
    Then combining \eqref{EB-use-key} with Proposition \ref{pro: 3conditioner} it follows that
  \begin{equation}\label{linear-key0}
  \begin{aligned}
  f([\widehat{\bG}]) - f([\bG^k])\leq 3 n\cdot \operatorname{d}_F([\bG^k],[\widehat{\bG}])^2
  &\leq \frac{300 }{n}\|\grad f([\bG^k])\|_F^2\\
  &\leq \frac{300 }{\alpha n\underline{t}}\left( f([\bG^{k+1}])  - f([\bG^k]) \right), 
  \end{aligned}
  \end{equation}
  where the first inequality is from $2n+\|\bm{\Delta}\|+\|\bm{\Delta}\widehat{\bG}\|_{\infty}\leq3n$, since
      \[
      \|\bm{\Delta} \widehat{\bG}\|_{\infty} 
      \leq \| \bm{\Delta}\|\cdot \operatorname{d}_F([\widehat{\bG}], [\bG^{\star}])+\left\|\bm{\Delta} \bG^{\star}\right\|_{\infty}
      \leq \frac{4\sqrt{d}\|\bm{\Delta}\|^2}{\sqrt{n}}+\left\|\bm{\Delta} \bG^{\star}\right\|_{\infty}
      \leq\frac{n}{10}.
    \]
  Then we have that 
  \begin{equation}\label{linear-key1}
  \begin{aligned}
  f([\widehat{\bG}]) - f([\bG^{k+1}]) 
  &= f([\widehat{\bG}]) - f([\bG^k])  - \left( f([\bG^{k+1}]) - f([\bG^k]) \right) \\
  &\leq \left( \frac{300 }{\alpha n\underline{t}} - 1 \right) \left( f([\bG^{k+1}]) - f([\widehat{\bG}]) + f([\widehat{\bG}]) - f([\bG^k]) \right).
  \end{aligned}
  \end{equation}
      Since $f([\widehat{\bG}])\ge f([\bG^k])$ for $k\in\mathbb{N}$, we may assume without loss of generality that $a'=\frac{300 }{\alpha n\underline{t}}>1$. Thus, from \eqref{linear-key1} one has that
      \begin{equation}\label{func-linear}
      f([\widehat{\bG}])-f([\bG^{k+1}]) \le \frac{a'-1}{a'}\left( f([\widehat{\bG}]) - f([\bG^k]) \right), 
      \end{equation}
      which yields with $\lambda=\tfrac{a'-1}{a'}\in(0,1)$ that
      \begin{equation}\label{linear-key2}
      f([\widehat{\bG}])-f([\bG^k]) \le \left( f([\widehat{\bG}])-f([\bG^0]) \right) \lambda^k.
      \end{equation}
From \eqref{pro: 3conditioner} and \eqref{retr-key-ineq-1}  we know that for all $k$ that
\[
\frac{\alpha}{\overline{t}}\|\bG^{k+1}-\bG^k\|_F^2\leq \alpha t_k \|\operatorname{grad} \bar{f}(\bG^k)\|_{F}^{2}  \leq f([\bG^{k+1}])-f([\bG^{k}]) \leq f([\widehat{\bG}])-f([\bG^{k}]).
\]
Combining this with \eqref{func-linear}, it follows that for any $k$, $l$ that 
\[
\frac{\alpha}{\overline{t}}\|\bG^{k+l+1}-\bG^{k+l}\|_F^2
\leq f([\widehat{\bG}])-f([\bG^{k+l}])\leq \left(\frac{a'-1}{a'}\right)^l \left( f([\widehat{\bG}]) - f([\bG^k]) \right).
\]
Thus, one has that
\begin{equation}\label{sequence-converge-key}
\begin{aligned}
\|\bG^{k+l}-\bG^k\|_F
\leq\sum_{i=1}^l \|\bG^{k+i}-\bG^{k+i-1}\|_F
&\leq \sum_{i=1}^l \rho^{i} \left( f([\widehat{\bG}]) - f([\bG^k]) \right)^{\frac{1}{2}} \\
&\leq  \frac{\rho(1-\rho^l)}{1-\rho} \left( f([\widehat{\bG}]) - f([\bG^k]) \right)^{\frac{1}{2}},
\end{aligned}
\end{equation}
where $\rho:=\sqrt{\frac{\overline{t}}{\alpha}}\left(\frac{a'-1}{a'}\right)^{\frac{1}{2}}$. Consequently, $\{\bG^k\}_{k\ge0}$ is a Cauchy sequence. Let $l\rightarrow\infty$ and we know from \eqref{sequence-converge-key} that for some $\bG^*\in[\widehat{\bG}]$ one has that
\begin{align*}
\operatorname{d}_F([\bG^k],[\widehat{\bG}])\leq
\|\bG^{k}-\bG^*\|_F
\leq  \frac{\rho}{1-\rho} \left( f([\widehat{\bG}]) - f([\bG^k]) \right)^{\frac{1}{2}}
\leq a \left( f([\widehat{\bG}])-f([\bG^0]) \right)^{\frac{1}{2}} \lambda^{\frac{k}{2}}
\end{align*}
with $a:=\frac{\rho}{1-\rho}$. This completes the proof. 
  \end{proof}

In order to apply Theorem \ref{thm:convergence-rate} for deriving the global convergence of Algorithm \ref{alg: rgd}, we still need to find a proper initialization strategy and promise the iterates of the algorithm will not leave the effective domain of the Riemannian local error bound once it steps in (i.e., \eqref{stay-in-ball-requ}). 

We first handle the latter part for keeping the iterates in the effective domain of Corollary \ref{cor-EB}. To begin with, we have the following lemma with proof in Appendix \ref{appendix-B}. 
\begin{lemma}\label{lemma-dist-rotation}
Let $\bG, \bm{H}_1, \bm{H}_2 \in \mathbb{R}^{n d \times d}$ satisfy $\bG^{\top} \bG=\bm{H}_1^\top \bm{H}_1=\bm{H}_2^\top \bm{H}_2=n \bm{I}_{d}$ and $\bQ$ be such that $\operatorname{d}_{F}([\bm{H}_1], [\bm{H}_2])=\left\|\bm{H}_1\bQ-\bm{H}_2\right\|_{F}$. 
Suppose that $\varepsilon_{1}, \varepsilon_{2} \in\left(0,\frac{1}{2}\right)$ with 
\[\operatorname{d}_{F}([\bm{H}_1], [\bG])=\|\bm{H}_1-\bG\|_{F} \leq\varepsilon_{1} \sqrt{n}\quad \text{and}\quad \operatorname{d}_{F}([\bm{H}_2], [\bG])=\|\bm{H}_2-\bG\|_{F} \leq \varepsilon_{2} \sqrt{n}.\]
 Then it follows that
\begin{align*}
\|\bQ-\bm{I}_d\|_F  
&\leq \frac{ 2\min\{\operatorname{d}_{F}([\bm{H}_1], [\bG]),\operatorname{d}_{F}([\bm{H}_2], [\bG])\}}{n-2\max\{\operatorname{d}_{F}([\bm{H}_1], [\bG])^2,\operatorname{d}_{F}([\bm{H}_2], [\bG])^2\}} \cdot \operatorname{d}_{F}([\bm{H}_1], [\bm{H}_2])\notag\\
&\leq \frac{4\min\{\varepsilon_1,\varepsilon_2\}}{\sqrt{n}}\cdot \operatorname{d}_{F}([\bm{H}_1], [\bm{H}_2]).
\end{align*}
\end{lemma}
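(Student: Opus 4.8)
The plan is to pass everything to the $d\times d$ cross-Gram matrix $X:=\bm{H}_1^\top\bm{H}_2$ and to read off from the hypotheses the hidden ``alignment'' (optimality) conditions. Because the identity rotation attains $\operatorname{d}_F([\bm{H}_i],[\bG])=\|\bm{H}_i-\bG\|_F$, the orthogonal Procrustes optimality condition gives that $M_i:=\bG^\top\bm{H}_i$ is symmetric positive semidefinite; combined with $\operatorname{tr}(M_i)=nd-\tfrac12 d_i^2$ (where $d_i:=\operatorname{d}_F([\bm{H}_i],[\bG])$) and $\bm{0}\preceq M_i\preceq n\bm{I}_d$, this forces $M_i\succeq(n-\tfrac12 d_i^2)\bm{I}_d$. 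Because $\bQ$ attains $\operatorname{d}_F([\bm{H}_1],[\bm{H}_2])$, it is exactly the orthogonal polar factor of $X$, so $\bm{H}_2^\top\bm{H}_1\bQ=\bQ^\top X\succeq\bm{0}$; and the identity $\operatorname{d}_F([\bm{H}_1],[\bm{H}_2])^2=2(nd-\|X\|_*)$ together with $\|X\|_{\mathrm{op}}\le n$ yields $\sigma_{\min}(X)\ge n-\tfrac12 d_{12}^2\ge n-2\max\{d_1^2,d_2^2\}$, writing $d_{12}:=\operatorname{d}_F([\bm{H}_1],[\bm{H}_2])$.

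Next I would isolate the skew-symmetric part $X_a:=\tfrac12(X-X^\top)$. With $\bm{A}:=\bm{H}_1-\bG$ and $\bm{B}:=\bm{H}_2-\bG$, expanding $X=n\bm{I}_d+\bG^\top\bm{B}+\bm{A}^\top\bG+\bm{A}^\top\bm{B}=(M_1+M_2-n\bm{I}_d)+\bm{A}^\top\bm{B}$ shows the first bracket is symmetric, so $X_a=\operatorname{skew}(\bm{A}^\top\bm{B})$ while $X_s:=\tfrac12(X+X^\top)=(M_1+M_2-n\bm{I}_d)+\operatorname{sym}(\bm{A}^\top\bm{B})$. The crucial point is that $X_a$ is genuinely ``second order'': writing $\bm{B}-\bm{A}=-(\bm{H}_1-\bm{H}_2)$ and then decomposing $\bm{H}_1-\bm{H}_2=\bm{H}_1(\bm{I}_d-\bQ)+(\bm{H}_1\bQ-\bm{H}_2)$, one has $\bm{A}^\top\bm{H}_1=n\bm{I}_d-M_1$, which has operator norm at most $\tfrac12 d_1^2$ (not $\sqrt n$); using $\|\bm{A}\|_{\mathrm{op}}\le d_1$, $\|\bm{H}_1\bQ-\bm{H}_2\|_F=d_{12}$, and the symmetric analogue with $\bm{B}$, this gives
\[
\|X_a\|_F\;\le\;\tfrac12\min\{d_1^2,d_2^2\}\,\|\bm{I}_d-\bQ\|_F\;+\;\min\{d_1,d_2\}\,d_{12}.
\]

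The third ingredient is a perturbation estimate for the orthogonal polar factor. From $M_i\succeq(n-\tfrac12 d_i^2)\bm{I}_d$ and $\|\bm{A}^\top\bm{B}\|_{\mathrm{op}}\le d_1d_2$ one checks $X_s\succeq(n-\tfrac12(d_1+d_2)^2)\bm{I}_d\succ\bm{0}$, so $X_s$ is positive semidefinite with polar factor $\bm{I}_d$; the standard perturbation bound for the orthogonal polar factor then gives
\[
\|\bQ-\bm{I}_d\|_F\;\le\;\frac{2\,\|X-X_s\|_F}{\sigma_{\min}(X)+\sigma_{\min}(X_s)}\;=\;\frac{2\,\|X_a\|_F}{\sigma_{\min}(X)+\lambda_{\min}(X_s)}
\]
(alternatively this can be produced in place from the identity $\|X\|_*-\operatorname{tr}(X)=\tfrac12\operatorname{tr}\!\big((\bm{I}_d-\bQ)(\bm{I}_d-\bQ)^\top(X^\top X)^{1/2}\big)$, valid for any square $X$ with polar factor $\bQ$). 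Substituting the bound on $\|X_a\|_F$, collecting the $\|\bm{I}_d-\bQ\|_F$ terms, and using $\sigma_{\min}(X)+\lambda_{\min}(X_s)\ge 2(n-2\max\{d_1^2,d_2^2\})$ together with $d_i\le\varepsilon_i\sqrt n<\tfrac12\sqrt n$ (so $\min\{d_1^2,d_2^2\}<\tfrac14 n$ and the bootstrap coefficient stays strictly below the leading term), I would solve the resulting linear inequality for $\|\bm{I}_d-\bQ\|_F$ to obtain $\|\bQ-\bm{I}_d\|_F\le\frac{2\min\{d_1,d_2\}}{n-2\max\{d_1^2,d_2^2\}}\,d_{12}$. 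The second displayed inequality then follows from $\min\{d_1,d_2\}\le\min\{\varepsilon_1,\varepsilon_2\}\sqrt n$ and $n-2\max\{d_i^2\}\ge n\big(1-2\max\{\varepsilon_i^2\}\big)\ge\tfrac n2$.

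The step I expect to be most delicate is this last one: invoking (or, for a self-contained proof, reproving) the $2/(\sigma_{\min}+\sigma_{\min})$ perturbation bound for the polar factor, and threading the constants through the bootstrap. The naive estimate $\|X_a\|_F\lesssim\min\{d_1,d_2\}\,\|\bm{H}_1-\bm{H}_2\|_F$ is off by a factor $\sqrt n$ once $\|\bm{H}_1-\bm{H}_2\|_F$ is converted back to $d_{12}$, so one really must route the difference $\bm{H}_1-\bm{H}_2$ through $\bm{H}_1(\bm{I}_d-\bQ)$ and absorb that term against the small quantity $\|n\bm{I}_d-M_1\|_{\mathrm{op}}\le\tfrac12 d_1^2$ rather than against $\|\bm{H}_1\|_{\mathrm{op}}=\sqrt n$; the constant $2$ in the numerator is precisely what this careful routing buys.
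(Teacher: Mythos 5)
Your proposal is correct, and it takes a genuinely different route from the paper's. The paper decomposes $\bm{H}_i=a\bG+\sqrt{n}\bm{W}$ with a \emph{scalar} coefficient along $\bG$, then manipulates the optimality inequality $\|\bm{H}_1\bQ-\bm{H}_2\|_F\le\|\bm{H}_1-\bm{H}_2\|_F$ directly, extracting the term $ab\,\|\bm{I}_d-\bQ\|_F^2$ on one side and bounding the residual difference via $\|\bm{W}\|_F\le \operatorname{d}_F([\bm{H}_1],[\bG])/\sqrt{n}$. You instead work entirely at the level of the $d\times d$ Gram matrices $M_i=\bG^\top\bm{H}_i$ and $X=\bm{H}_1^\top\bm{H}_2$: Procrustes optimality plus trace counting gives $M_i\succeq(n-\tfrac12 d_i^2)\bm{I}_d$ and $\sigma_{\min}(X)\ge n-2\max\{d_1^2,d_2^2\}$, the skew part of $X$ is shown to be second order by routing $\bm{H}_1-\bm{H}_2$ through $\bm{H}_1(\bm{I}_d-\bQ)$ so that it hits $n\bm{I}_d-M_1$ (operator norm $\le\tfrac12 d_1^2$) rather than $\bm{H}_1$ itself, and a polar-factor perturbation bound closes the bootstrap; I checked the constants and they do reproduce both displayed inequalities. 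What your approach buys is rigor in the general matrix case: the paper's expansion into $n\|a\bQ-b\bm{I}_d\|_F^2+n\|\bm{W}\bQ-\bm{V}\|_F^2$ and the identity $\|\bm{H}_1^\top\bG\|_*=na\,\|\bm{I}_d\|_*$ implicitly require $\bG^\top\bm{H}_i$ to be a scalar multiple of $\bm{I}_d$ (true for phase synchronization, not in general), whereas your $M_i\succeq(n-\tfrac12 d_i^2)\bm{I}_d$ handles arbitrary symmetric PSD alignment matrices. The cost is reliance on the $2/(\sigma_{\min}(A)+\sigma_{\min}(B))$ perturbation theorem for the orthogonal polar factor (R.-C.~Li); note that your self-contained fallback via $\|X\|_*-\operatorname{tr}(X)$ only yields the denominator $\sigma_{\min}(X)$ rather than $\sigma_{\min}(X)+\lambda_{\min}(X_s)$, which loses the exact constant in the first displayed inequality (though it still suffices for the second, which is what the rest of the paper uses).
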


With the help of Lemma \ref{lemma-dist-rotation}, we have the following result to keep the iterates in the region. As an example, we only show the case when $d=3$, the conclusion and proof of Proposition \ref{prop: stayintheball} is similar when $d\neq3$ depending on the slight difference of explicit form of the exponential map. 

\begin{prop}[Staying in Ball for $d=3$]\label{prop: stayintheball} 
Let $\bG\in\mathcal{O}(3)^{n}$ and $\bQ^\star:=\operatorname{argmin}_{\bQ} \|\bG\bQ- \bG^\star\|_F$. Let $\bm{E}^{\star}:=[\bm{E}_1^{\star};\ldots;\bm{E}_n^{\star}]\in\operatorname{Skew}(3)^n$ satisfy $\bG_{i}^{\star}=\bG_{i} \exp(\bm{E}_{i}^{\star})\bQ^\star$ for $i\in[n]$. Suppose that the following assumptions hold: 
\begin{itemize}
\item {\rm (Noise)} $\|\bm{\Delta}\|\le \frac{n}{50}$ and $\|\bm{\Delta} \bG^{\star}\|_{\infty}\le \frac{n}{400}$;
\item {\rm (Distance)} 
$\|\bm{E}^\star\|_F\leq \frac{1}{200}\cdot\min\left\{\sqrt{n},\frac{n}{\|\bm{\Delta}\|}\right\}$ and $\|\bm{E}^\star\|_\infty \leq \frac{1}{10}$; 
\item {\rm (Stepsize)} $t \le \frac{1}{2n}$.
\end{itemize}
Then  it follows that
\[
\operatorname{d}_F([\bG^+], [\bG^\star])\leq \frac{1}{200}\cdot\min\left\{\sqrt{n},\frac{n}{\|\bm{\Delta}\|}\right\} \quad \text{and} \quad \|\bG^+\bQ_+^\star-\bG^\star\|_\infty \leq \frac{1}{10},
\]
where $\bG^{+}:=\bG\exp(t\grad \bar{f}(\bG))$ and $\bQ_+^\star:=\underset{\bQ}{\operatorname{argmin}}\ \|\bG^+\bQ- \bG^\star\|_F$.
\end{prop}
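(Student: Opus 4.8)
The plan is to run everything in exponential coordinates anchored at $\bG$: for $d=3$ the Riemannian exponential on each $\mathcal{O}(3)$-factor is matrix exponentiation of a skew matrix, so writing $\grad\bar f(\bG)=[\bG_1\bm{F}_1;\ldots;\bG_n\bm{F}_n]$ with $\bm{F}_i=2\operatorname{skew}(\bG_i^\top(\bC\bG)_i)\in\operatorname{Skew}(3)$, the iterate is $\bG_i^+=\bG_i\exp(t\bm{F}_i)$, and by hypothesis $\bG_i^\star\bQ^{\star\top}=\bG_i\exp(\bm{E}_i^\star)$. Lemma~\ref{lemma-exp-lip} (the $1$-Lipschitz bound on $\exp$ and the second-order estimate \eqref{retr-key-ineq-2}) is the bridge between the manifold distances and Euclidean distances of skew matrices. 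First I would convert the Distance hypotheses: from $\bG_i^\star\bQ^{\star\top}-\bG_i=\bG_i(\exp(\bm{E}_i^\star)-\bm{I})$ one gets $\operatorname{d}_F([\bG],[\bG^\star])\le\|\bG\bQ^\star-\bG^\star\|_F\le\|\bm{E}^\star\|_F$ and $\operatorname{d}_\infty([\bG],[\bG^\star])\le\|\bm{E}^\star\|_\infty$, so $\bG$ already lies inside the target balls (the content is that one gradient step does not leave them), and the singular-value lower bound $\sigma_{\min}(\bC_{i,:}\bG)\ge n-8d\|\bm{\Delta}\|^2/n-\|\bm{\Delta}\bG\|_\infty$ of Lemma~\ref{dist-GstarGhat} is available, with $\|\bm{\Delta}\bG\|_\infty\le\|\bm{\Delta}\bG^\star\|_\infty+\|\bm{\Delta}\|\,\|\bm{E}^\star\|_F$.

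Next, writing $\bm{R}:=\bG^\star\bQ^{\star\top}$ and $\bG_i^+\bQ^\star-\bG_i^\star=\bG_i(\exp(t\bm{F}_i)-\exp(\bm{E}_i^\star))\bQ^\star$, Lemma~\ref{lemma-exp-lip} yields the master inequality
\[
\operatorname{d}_F([\bG^+],[\bG^\star])^2\le\|\bG^+\bQ^\star-\bG^\star\|_F^2\le\sum_i\|t\bm{F}_i-\bm{E}_i^\star\|_F^2=\|\bm{E}^\star\|_F^2-2t\langle\bm{F},\bm{E}^\star\rangle+t^2\|\bm{F}\|_F^2,
\]
with $\sum_i$ replaced by $\max_i$ for the corresponding $\infty$-bound. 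The heart of the proof is then a \emph{restricted-strong-concavity estimate} showing the gradient pulls $[\bG]$ toward $[\bG^\star]$: use $\langle\bm{F}_i,\bm{E}_i^\star\rangle=2\langle(\bC\bG)_i,\bG_i\bm{E}_i^\star\rangle$, the splitting $(\bC\bG)_i=\bm{R}_i M+(\bm{\Delta}\bG)_i$ with $M:=\bm{R}^\top\bG$ coming from $\bC=\bm{R}\bm{R}^\top+\bm{\Delta}$, and the substitution $\bG_i^\top\bm{R}_i=\exp(\bm{E}_i^\star)$. The decisive structural facts are that optimality of $\bQ^\star$ makes $M$ symmetric positive semidefinite with $\|M\|\le n$ and $\operatorname{tr}(M)=3n-\tfrac12\operatorname{d}_F([\bG],[\bG^\star])^2$ — hence $\|M-n\bm{I}\|_F\le\tfrac12\operatorname{d}_F([\bG],[\bG^\star])^2$ — together with $\operatorname{tr}(M\bm{E})=0$ and $\operatorname{tr}(M\bm{E}^2)=-n\|\bm{E}\|_F^2+O(\|M-n\bm{I}\|_F\|\bm{E}\|_F^2)$ for symmetric $M$ and skew $\bm{E}$. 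Expanding $\operatorname{tr}(M\exp(-\bm{E}_i^\star)\bm{E}_i^\star)$ to second order (the cubic-and-higher exponential terms being controlled by $\|\bm{E}^\star\|_\infty\le\tfrac1{10}$ and the Rodrigues form of $\exp$ on $\operatorname{Skew}(3)$) gives a lower bound $\langle\bm{F}_i,\bm{E}_i^\star\rangle\gtrsim n\|\bm{E}_i^\star\|_F^2-(\|\bm{\Delta}\bG^\star\|_\infty+\|\bm{\Delta}\|\operatorname{d}_F([\bG],[\bG^\star]))\|\bm{E}_i^\star\|_F$; a parallel, cruder expansion bounds $\|\bm{F}_i\|_F\lesssim n\|\bm{E}_i^\star\|_F+\operatorname{d}_F([\bG],[\bG^\star])^2+\|\bm{\Delta}\bG^\star\|_\infty+\|\bm{\Delta}\|\operatorname{d}_F([\bG],[\bG^\star])$, and summing (with Cauchy--Schwarz on the noise parts and $\|\bm{\Delta}\bG^\star\|_F\le\sqrt n\|\bm{\Delta}\bG^\star\|_\infty$) gives the aggregate versions of both.

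With these in hand, the right side of the master inequality is a convex quadratic in $t$, hence bounded on $(0,\tfrac1{2n}]$ by its endpoint values; at $t=\tfrac1{2n}$ the leading $\|\bm{E}^\star\|_F^2$-terms collapse to a factor strictly below $1$, and the residual is linear in the noise. The hypotheses are calibrated so that the residual fits: $\|\bm{\Delta}\|\operatorname{d}_F([\bG],[\bG^\star])\le\|\bm{\Delta}\|\,\|\bm{E}^\star\|_F\le n/200$ (this is exactly where the $n/\|\bm{\Delta}\|$ inside the radius enters), $\|\bm{\Delta}\bG^\star\|_\infty\le n/400$, $\|\bm{\Delta}\|\le n/50$, and $\|\bm{E}^\star\|_F\le\tfrac1{200}\min\{\sqrt n,n/\|\bm{\Delta}\|\}$, forcing the right side below $\big(\tfrac1{200}\min\{\sqrt n,n/\|\bm{\Delta}\|\}\big)^2$; the identical computation at a single block index gives $\|\bG^+\bQ^\star-\bG^\star\|_\infty\le\tfrac1{10}$ with a definite margin. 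To conclude the $\infty$-claim, stated for the Procrustes optimizer $\bQ_+^\star$ of $\bG^+$ rather than $\bQ^\star$, write $\|\bG^+\bQ_+^\star-\bG^\star\|_\infty\le\|\bG^+\bQ^\star-\bG^\star\|_\infty+\|\bQ_+^\star-\bQ^\star\|_F$ and estimate $\|\bQ_+^\star-\bQ^\star\|_F$ by Lemma~\ref{lemma-dist-rotation} applied to the two frames $\bG^+$ and $\bG$ (both now known to sit in $\operatorname{d}_F$-balls around $\bG^\star$), which makes it $O(\tfrac1{\sqrt n})\operatorname{d}_F([\bG^+],[\bG^\star])=O(1/200)$, consuming the remaining margin. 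The main obstacle is the restricted-strong-concavity estimate of step two — showing the Riemannian gradient pulls toward $[\bG^\star]$ itself (not merely toward the true maximizer $[\widehat{\bG}]$), with the noise appearing only to first order — and then the constant bookkeeping: the target radius is small enough that the first-order term $\tfrac1{\sqrt n}\|\bm{\Delta}\bG^\star\|_\infty\|\bm{E}^\star\|_F$ and the second-order exponential-map errors must be tracked carefully, which is why all three distance hypotheses and both noise bounds are needed and why $d=3$, with its explicit $\exp$, is singled out for the sharpest constants.
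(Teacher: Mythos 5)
Your proposal is correct and follows essentially the same route as the paper's proof: work in exponential coordinates at $\bG$, use the Rodrigues formula on $\operatorname{Skew}(3)$ together with the symmetric PSD structure of $\bG^{\star\top}\bG\bQ^\star$ (eigenvalues within $\tfrac12\operatorname{d}_F([\bG],[\bG^\star])^2$ of $n$) to show $t\mapsto\|t\bm{F}_i-\bm{E}_i^\star\|_F$ contracts with a first-order noise residual, transfer back via the $1$-Lipschitz bound of Lemma \ref{lemma-exp-lip}, and correct the alignment $\|\bQ_+^\star-\bQ^\star\|_F$ via Lemma \ref{lemma-dist-rotation}. The only differences are organizational (you expand the square and phrase the contraction as an inner-product lower bound where the paper bounds $\|t\bm{F}_i-\bm{E}_i^\star\|_F$ directly by the triangle inequality, and the paper's alignment correction also carries an explicit $nt/15$ term from the optimal rotation between $\bG$ and $\bG^+$, which is what actually consumes most of the $\ell_\infty$ margin).
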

\begin{proof} 
First, from Lemma \ref{lemma-exp-lip} we have the following relationship that 
\begin{equation}\label{Euc-Rie-dist-infty}
\|\bG\bQ^\star-\bG^\star\|_\infty=\max_{i\in[n]}\|\bG_i \bQ^\star- \bG_i^\star\|_F=\max_{i\in[n]}\|\exp(\bm{E}_{i}^\star)-\bm{I}_d\|_F\leq\|\bm{E}^\star\|_{\infty}
\end{equation}
and
\begin{equation}\label{Euc-Rie-dist-F}
\operatorname{d}_{F}([\bG],[\bG^\star])=\|\bG\bQ^\star- \bG^\star\|_F=\sqrt{\sum_{i=1}^n\|\exp(\bm{E}_{i}^\star)-\bm{I}_d\|_F^2}\leq\|\bm{E}^\star\|_F.
\end{equation}
Also, we have the following useful bound
\begin{align} 
  \| \bm{\Delta} \bG \|_\infty \leq \| \bm{\Delta}(\bG \bQ^\star - \bG^\star)\|_\infty +   \| \bm{\Delta} \bG^\star \|_\infty 
  &\leq \| \bm{\Delta} \bG^\star \|_\infty + \| \bm{\Delta} \|\cdot \operatorname{d}_F([\bG],[\bG^\star])\notag
\end{align} 
which implies that
$\|\bm{\Delta} \bG\|_{\infty} \leq \kappa n$ with $\kappa=\frac{1}{100}$.

Since $\operatorname{d}_F([\bG],[\bG^\star])^2 
 =2(nd-\Vert \bG^{\star\top} \bG\Vert_*)$, there exists $\bm{U}\in \od$ such that $\bG^{\star\top} \bG \bQ^\star= \bm{U}\bm{\Sigma}_{\bG^{\star\top} \bG} \bm{U}^\top$. Then from the definition of $\grad \bar{f}(\bG)$, for each $i\in[n]$ one has that
\begin{equation}\label{dist-key-grad}
\begin{aligned}
& (\grad \bar{f}(\bG))_{i}\\
= &\ 2(\bC-\operatorname{symblockdiag}(\bC \bG \bG^\top )) \bG \\ 
= &\ \bC_{i,:} \bG-\bG_i \bG^\top \bC_{i,:}^\top \bG_i\\
= &\ \left(\bG_i^\star \bG^{\star\top}\bG \bG_i^\top-\bG_i \bG^\top \bG^{\star} \bG_i^{\star\top}+\bm{\Delta}_{i,:} \bG \bG_i^\top -\bG_i \bG^\top \bm{\Delta}_{i,:}\right)\bG_i\\
= &\ \left(\bG_i^\star  \bm{U}\bm{\Sigma}_{\bG^{\star\top} \bG} \bm{U}^\top \bQ^{\star\top} \bG_i^\top-\bG_i \bQ^\star \bm{U}\bm{\Sigma}_{\bG^{\star\top} \bG} \bm{U}^\top  \bG_i^{\star\top}+\bm{\Delta}_{i,:} \bG \bG_i^\top -\bG_i \bG^\top \bm{\Delta}_{i,:}\right)\bG_i\\
= &\ \left(\bG_i\exp(\bm{E}_{i}^\star) \bar{\bm{U}}\bm{\Sigma}_{\bG^{\star\top} \bG} \bar{\bm{U}}^\top  \bG_i^\top-\bG_i \bar{\bm{U}}\bm{\Sigma}_{\bG^{\star\top} \bG} \bar{\bm{U}}^\top \exp(\bm{E}_{i}^\star)^\top \bG_i^\top+\bm{\Delta}_{i,:} \bG \bG_i^\top -\bG_i \bG^\top \bm{\Delta}_{i,:}\right)\bG_i,
\end{aligned}
\end{equation}
where $\bar{\bm{U}}:=\bQ^\star \bm{U}$. On the other hand, we know the explicit form of the exponential map for each $\bm{H}\in{\rm Skew}(3)$: 
\begin{equation}\label{exp-d3}
\exp(\bm{H})=\bm{I}_3+\frac{\sqrt{2}\sin\left(\frac{\sqrt{2}}{2}\|\bm{H}\|_F\right)}{\|\bm{H}\|_F}\bm{H}+\frac{2\left(1-\cos\left(\frac{\sqrt{2}}{2}\|\bm{H}\|_F\right)\right)}{\|\bm{H}\|_F^2}\bm{H}^2.
\end{equation}   
Then from \eqref{Euc-Rie-dist-F}, \eqref{dist-key-grad} and \eqref{exp-d3} we know that 
\begin{align}\label{grad-minus-Estar}
& \|t\bG_i^\top\grad \bar{f}(\bG)_i-\bm{E}_i^\star\|_F   \notag\\
= &\ \left\|t\left(\exp(\bm{E}_{i}^\star) \bar{\bm{U}}\bm{\Sigma}_{\bG^{\star\top} \bG} \bar{\bm{U}}^\top  - \bar{\bm{U}}\bm{\Sigma}_{\bG^{\star\top} \bG} \bar{\bm{U}}^\top \exp(\bm{E}_{i}^\star)^\top +\bG_i^\top\bm{\Delta}_{i,:} \bG  - \bG^\top \bm{\Delta}_i\bG_i\right)-\bm{E}_i^\star\right\|_F\notag\\
\leq &\  \left\|t\sin \left(\frac{\sqrt{2}}{2}\|\bm{E}_i^\star\|_F\right)\frac{\sqrt{2}}{\|\bm{E}_i^\star\|_F}\left( \bm{E}_i^\star \bar{\bm{U}}\bm{\Sigma}_{\bG^{\star\top} \bG} \bar{\bm{U}}^\top  + \bar{\bm{U}}\bm{\Sigma}_{\bG^{\star\top} \bG} \bar{\bm{U}}^\top \bm{E}_i^\star \right)-\bm{E}_i^\star\right\|_F \notag\\
&+\left\|\frac{2t\left(1-\cos\left(\frac{\sqrt{2}}{2}\|\bm{E}_i^\star\|_F\right)\right)}{\|\bm{E}_i^\star\|_F^2}(\bm{E}_i^{\star2} \bar{\bm{U}}\bm{\Sigma}_{\bG^{\star\top} \bG} \bar{\bm{U}}^\top  - \bar{\bm{U}}\bm{\Sigma}_{\bG^{\star\top} \bG} \bar{\bm{U}}^\top \bm{E}_i^{\star2} )\right\|_F+2t\|\bm{\Delta}_{i,:} \bG\|_F\notag\\
\leq &\ \|\bm{E}_i^\star\|_F\| 2t \bm{\Sigma}_{\bG^{\star\top} \bG} -\bm{I}_d\|+2t\|\bm{\Sigma}_{\bG^{\star\top} \bG}\|\cdot\|\bm{E}_i^\star\|_F^2+2t\|\bm{\Delta}_{i,:} \bG\|_F\notag\\
\leq &\ \|\bm{E}_i^\star\|_F (1-2t\sigma_{\min}( \bm{\Sigma}_{\bG^{\star\top} \bG}) )+2tn\|\bm{E}_i^\star\|_F^2+2t\|\bm{\Delta}_{i,:} \bG\|_F\notag\\
\leq &\  \left(1-t\left(2-\frac{1}{n}\operatorname{d}_F([\bG^\star],[\bG])^2\right) n \right)\|\bm{E}_i^\star\|_F+2tn\|\bm{E}_i^\star\|_F^2+2t\|\bm{\Delta}_{i,:} \bG\|_F\notag\\
\leq &\  \left(1-t\left(2-2\|\bm{E}_i^\star\|_F-\frac{1}{n}\|\bm{E}^\star\|_F^2\right) n \right)\|\bm{E}_i^\star\|_F+2t\|\bm{\Delta}_{i,:} \bG\|_F,
\end{align}
where $1-2t\sigma_{\min}( \bm{\Sigma}_{\bG^{\star\top} \bG}) \geq 0$ is from $1-2tn\ge 0$ (by $t\leq\frac{1}{2n}$), and the last two inequalities hold since
\[
\sigma_{\min }(\bG^{\star\top} \bG)\ge n-\sum_{l=1}^{d}\left(n-\sigma_{l}(\bG^{\star\top} \bG)\right)=n-(nd-\Vert \bG^{\star\top} \bG\Vert_*)=n-\frac{1}{2}\operatorname{d}_F([\bG^\star],[\bG])^2
\]
(noting that $\|\bG^{\star\top} \bG\| \leq n$ implies $0 \leq \sigma_{l}(\bG^{\star\top} \bG) \leq n$ for $l\in[d]$).
Since $\bG^{+}=\Exp_{\bG}(t\grad \bar{f}(\bG))$ and 
the 1-Lipschitz property of the exponential map on skew-symmetric space (i.e., $\|\exp(\bm{X})-\exp(\bm{Y})\|_F\leq \|\bm{X}-\bm{Y}\|_F$ for all $\bm{X},\bm{Y}\in \operatorname{Skew}(d)$) from Lemma \ref{lemma-exp-lip}, one has for each $i\in[n]$ that
\begin{align}\label{dist-key-GplusGstar}
\|  \bG_{i}^{+}\bQ^\star-\bG_{i}^{\star}\|_F
&=\|  \bG_{i}\exp(t\bG_i^\top\grad \bar{f}(\bG)_i)\bQ^\star-\bG_{i}^{\star}\|_F\notag\\
&=\|  \exp(t\bG_i^\top\grad \bar{f}(\bG)_i)-\exp(\bm{E}_i^\star)\|_F\notag\\
&\leq\| t\bG_i^\top\grad \bar{f}(\bG)_i-\bm{E}_i^\star\|_F. 
\end{align}
Thus, we deduce from \eqref{dist-key-grad}, \eqref{exp-d3}, \eqref{grad-minus-Estar} and \eqref{dist-key-GplusGstar} with Cauchy-Schwarz inequality $\sum_{i=1}^n(a_i+b_i)^2\leq\left(\sqrt{\sum_{i=1}^n a_i^2}+\sqrt{\sum_{i=1}^n b_i^2}\right)^2$ that
\begin{align*}
\operatorname{d}_{F}([\bG^{+}], [\bG^{\star}])^2
&\leq\sum_{i=1}^n\|  \bG_{i}^{+}\bQ^\star-\bG_{i}^{\star}\|_F^2 \leq \sum_{i=1}^n \|t\bG_i^\top\grad \bar{f}(\bG)_i-\bm{E}_i^\star\|_F^2 \notag\\
&\leq \left[\left(1-t\left(2-2\|\bm{E}^\star\|_\infty-\frac{1}{n}\|\bm{E}^\star\|_F^2\right) n \right)\|\bm{E}^\star\|_F+2t\|\bm{\Delta} \bG\|_\infty\right]^{2}.\notag
\end{align*}
Thus, from 
$\|\bm{E}^\star\|_F\leq \varepsilon\min\left\{\sqrt{n},\frac{n}{\|\bm{\Delta}\|}\right\}$ with $\varepsilon:=\frac{1}{200}$, $\|\bm{E}_i^\star\|_F\leq \frac{1}{10}$, $t\le\frac{1}{2n}$ and $\|\bm{\Delta} \bG\|_{\infty} \leq \kappa n$, it follows 
that
\[
\operatorname{d}_F([\bG^{+}], [\bG^{\star}])^2\leq \varepsilon^2\min\left\{n,\frac{n^2}{\|\bm{\Delta}\|^2}\right\}.
\]
Again by \eqref{dist-key-grad}, we also have that
\begin{align*}
 \|\grad \bar{f}(\bG)_i\|_F   
=\ & \|\exp(\bm{E}_{i}^\star) \bar{\bm{U}}\bm{\Sigma}_{\bG^{\star\top} \bG} \bar{\bm{U}}^\top  - \bar{\bm{U}}\bm{\Sigma}_{\bG^{\star\top} \bG} \bar{\bm{U}}^\top \exp(\bm{E}_{i}^\star)^\top +\bG_i^\top\bm{\Delta}_{i,:} \bG  - \bG^\top \bm{\Delta}_{i,:}\bG_i\|_F\notag\\
\leq\ & 2\|\bm{E}_i^\star\|_F\|  \bm{\Sigma}_{\bG^{\star\top} \bG} \|+2\|  \bm{\Sigma}_{\bG^{\star\top} \bG} \|\cdot\|\bm{E}_i^\star\|_F^2+2\|\bm{\Delta}_{i,:} \bG\|_F\notag\\
\leq\ & 2n(1+\|\bm{E}_i^\star\|_F)\|\bm{E}_i^\star\|_F+2\|\bm{\Delta}_{i,:} \bG\|_F\notag
\end{align*}
and consequently
\begin{align*}
 \|\grad \bar{f}(\bG)\|_F = \sqrt{\sum_{i=1}^n \|\grad \bar{f}(\bG)_i\|_F^2}  
&\leq 2\sqrt{\sum_{i=1}^n\left(n(\|\bm{E}_i^\star\|_F+\|\bm{E}_i^\star\|_F^2)+\|\bm{\Delta}_{i,:} \bG\|_F\right)^2}\notag\\
&\leq 2\sqrt{2\left((1+\|\bm{E}^\star\|_{\infty})^2n^2\|\bm{E}^\star\|_F^2+\|\bm{\Delta} \bG\|_F^2\right)}.
\end{align*}
Then, one has that
\begin{align}\label{Gplus-G-dist}
\operatorname{d}_{F}([\bG^{+}],[\bG])^{2} \leq\|\bG-\bG^{+}\|_{F}^{2} 
\leq\|t\grad \bar{f}(\bG)\|_{F}^{2}
&\leq \frac{t^{2} n^{3}}{1000}.
\end{align}
Note from \eqref{Euc-Rie-dist-F} that $\operatorname{d}_{F}([\bG], [\bG^{\star}])^{2}=\|\bG \bQ^{\star}-\bG^{\star}\|_{F}^{2} \leq \varepsilon^{2} n$. Let $\bQ^\star_{+}:=\operatorname{argmin}_{\bQ}\left\|\bG^{+} \bQ-\bG^{\star}\right\|_{F}$ and $\bQ_+:=\operatorname{argmin}_{\bQ}\left\|\bG-\bG^{+} \bQ\right\|_{F}$.
Then from Lemma \ref{lemma-dist-rotation} with \eqref{Gplus-G-dist} and the fact that 
\[
\bQ_+^\top \bQ_+^{\star}\bQ^{\star\top}=\underset{\bQ}{\operatorname{argmin}}\left\|\bG^{\star}\bQ^{\star\top} -\bG^{+}\bQ_+  \bQ\right\|_{F},
\]
one has with $\varepsilon_1:=\varepsilon$, $\varepsilon_2:=\frac{tn}{\sqrt{1000}}$ and $\operatorname{d}_F([\bG^{+}], [\bG^{\star}])\leq \varepsilon\sqrt{n}$ that
\[
\|\bQ_+^\top \bQ_+^{\star}\bQ^{\star\top}-\bm{I}_d\|_F \leq 4 \min \left\{\varepsilon_{1}, \varepsilon_{2}\right\}\cdot \frac{\operatorname{d}_{F}([\bG^+],[\bG^\star])}{\sqrt{n}}\leq 4\varepsilon\min\left\{\varepsilon,\frac{tn}{\sqrt{1000}}\right\}.
\]
On the other hand we know that
\begin{align*}
\|\bQ_+-\bm{I}_d\|_F
&= \left\|\bQ_+-\frac{1}{n}\bG^{+\top}\bG+\frac{1}{n}\bG^{+\top}\bG-\bm{I}_d\right\|_F\leq \frac{2}{n}\|\bG^{+\top}\bG-n\bm{I}_d\|_F
\leq \frac{nt}{15},
\end{align*}
where the last inequality is from \eqref{Gplus-G-dist} and
\begin{align*}
\|\bG^{+\top}\bG-n\bm{I}_d\|_F
&\leq \sum_{i=1}^n\left\|\exp(t\grad \bar{f}(\bG)_i)-\bm{I}_d\right\|_F\leq \sum_{i=1}^n\left\|t\grad \bar{f}(\bG)_i\right\|_F\leq t\sqrt{n}\left\|\grad \bar{f}(\bG)\right\|_F.
\end{align*}
Hence, we obtain
\begin{align}\label{dist-key-littlegplusgstar}
\|\bQ_+^\star-\bQ^\star\|_F
&\leq \|\bQ_+^\top \bQ_+^{\star}\bQ^{\star\top}-\bm{I}_d\|_F+\|\bQ_+-\bm{I}_d\|_F\leq 4\varepsilon\min\left\{\varepsilon,\frac{tn}{\sqrt{1000}}\right\}+\frac{n t}{15}.
\end{align}
Finally, from \eqref{grad-minus-Estar}, \eqref{dist-key-GplusGstar} and \eqref{dist-key-littlegplusgstar} we conclude that
\begin{align*}
  \| \bG_{i}^{+}\bQ_+^\star-\bG_{i}^{\star}\|_F 
\leq\ &\|\bG_{i}^+\bQ^\star-\bG_{i}^\star\|_F+\|\bQ^\star-\bQ_+^\star\|_F \\
\leq\ &  \left(1-t\left(2-2\|\bm{E}^\star\|_\infty-\frac{1}{n}\|\bm{E}^\star\|_F^2\right) n \right)\|\bm{E}_i^\star\|_F+2t\|\bm{\Delta}_{i,:} \bG\|_F\notag\\
&+4\varepsilon\min\left\{\varepsilon,\frac{tn}{\sqrt{1000}}\right\}+\frac{nt}{15}\notag\\
\leq\ & \left(1-2tn+t\varepsilon^2n\right)\cdot \frac{1}{10}+2\cdot \frac{1}{10}^2tn+2\kappa tn+4\varepsilon\min\left\{\varepsilon,\frac{tn}{\sqrt{1000}}\right\} +\frac{nt}{15} \\
\leq\ & \frac{1}{10}.
\end{align*}
The proof is complete.
\end{proof}

In Proposition \ref{prop: stayintheball}, for any given $\bG$ and $\bG^\star$ in the same connect component of $\odn$, the skew-symmetric matrix $\bm{E}^\star$ satisfying $\bG_{i}^{\star}=\bG_{i} \exp(\bm{E}_{i}^{\star})\bQ^\star$ for all $i\in[n]$ is well-defined since the exponential map $\Exp(\cdot)$ for $\sod$ is surjective  from 
\cite[Corollary 11.10]{hall2015lie}.

Also, we have the following lemma which is important by relating the Riemannian distance and the distance in the ambient Euclidean space; see details in Remark \ref{convergence-key-remark}. It plays a crucial role in making Proposition \ref{prop: stayintheball} effective, as it enables the transfer of distances to the same measurement. The proof of this lemma can be found in Appendix \ref{appendix-C}.


\begin{lemma}\label{lemma-inj-cvx-radius}
For the orthogonal/rotation group $\og$, the injective and convex radius\footnote{The injectivity and convexity
radius related to $\mathcal{M}$ (letting $\mathbf{B}(\bm{X}, r):=\{\bm{Y} \in  \mathcal{M}\mid \operatorname{d}^{\mathcal{M}}(\bm{X}, \bm{Y})<r\}$): 
\begin{align*}
r_{\rm inj}(\mathcal{M})&:=\inf_{\bm{X}\in\mathcal{M}}\sup\left\{r>0:\Exp_{\bm{X}}(\cdot)
\mbox{ is a diffeomorphism on } \mathbf{B}(\bm{0},r)\subset \operatorname{T}_{\bm{X}}\mathcal{M}
\right\},\notag\\
r_{\operatorname{cvx}}(\mathcal{M})&:=\inf_{\bm{X}\in\mathcal{M}}\sup \left\{r>0:  \text {each ball is strongly convex } \text {and each geodesic is minimal in } \mathbf{B}(\bm{X}, r)\right\}.
\end{align*}
}  is
\[
r_{{\rm inj}}(\og)=\sqrt{2}\pi\quad \text{and}\quad r_{{\rm cvx}}(\og)=\frac{\sqrt{2}}{2}\pi.
\]
\end{lemma}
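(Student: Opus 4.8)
The plan is to reduce to the identity component $\mathcal{SO}(d)$: the other component of $\mathcal{O}(d)$ is an isometric copy of $\mathcal{SO}(d)$ (left translation by a fixed reflection is a Euclidean isometry), and no geodesic joins the two components, so the injectivity and convexity radii of $\mathcal{O}(d)$ coincide with those of $\mathcal{SO}(d)$. On $\mathcal{SO}(d)$ the Frobenius metric is bi-invariant, since left and right translations by elements of $\mathcal{SO}(d)$ are Euclidean isometries; by homogeneity $r_{{\rm inj}}$ and $r_{{\rm cvx}}$ are then independent of the base point, so it suffices to work at $\bm{I}_d$, where $\operatorname{Exp}_{\bm{I}_d}(\bm{E}) = \exp(\bm{E})$ for $\bm{E}\in\operatorname{Skew}(d)$ and geodesics are one-parameter subgroups. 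I will compute the length $\ell$ of the shortest geodesic loop at $\bm{I}_d$ and a lower bound for the conjugate radius $r_{{\rm conj}}$, and then use the classical Klingenberg identity $r_{{\rm inj}} = \min\{r_{{\rm conj}},\,\tfrac12\ell\}$.

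\textbf{Shortest closed geodesic.} A geodesic through $\bm{I}_d$ returns to $\bm{I}_d$ at time $1$ iff it has the form $t\mapsto\exp(t\bm{E})$ with $\bm{0}\neq\bm{E}\in\operatorname{Skew}(d)$ and $\exp(\bm{E})=\bm{I}_d$; since its velocity at time $1$ is $\exp(\bm{E})\bm{E}=\bm{E}$, the same as at time $0$, every geodesic loop at $\bm{I}_d$ is actually a smooth closed geodesic, so $\ell$ equals the length of the shortest closed geodesic. Writing $\bm{E}$ in its block-diagonal normal form with $2\times 2$ rotation blocks of angles $\theta_1,\dots,\theta_m$ (plus possibly a zero block), $\exp(\bm{E})=\bm{I}_d$ forces each $\theta_j\in 2\pi\mathbb{Z}$, while $\|\bm{E}\|_F^2 = 2\sum_j\theta_j^2$, so the minimum over nonzero admissible $\bm{E}$ is attained by a single block with $|\theta_1|=2\pi$, giving $\ell = 2\sqrt{2}\pi$ and $\tfrac12\ell = \sqrt{2}\pi$. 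Concretely, if $\bm{J}\in\operatorname{Skew}(d)$ is $\left[\begin{smallmatrix}0&-1\\1&0\end{smallmatrix}\right]$ in its leading $2\times2$ block and zero elsewhere, then $\pm\pi\bm{J}$ are two distinct tangent vectors of norm $\sqrt{2}\pi$ both mapped by $\exp$ to the same point, so $\operatorname{Exp}_{\bm{I}_d}$ is not injective on any open ball of radius $>\sqrt{2}\pi$.

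\textbf{Curvature, conjugate radius, and $r_{{\rm inj}}$.} For a bi-invariant metric the sectional curvature of the plane spanned by orthonormal $\bm{X},\bm{Y}\in\operatorname{Skew}(d)$ equals $\tfrac14\|[\bm{X},\bm{Y}]\|_F^2$. Using $\|\bm{X}\bm{Y}\|_F\le\|\bm{X}\|\,\|\bm{Y}\|_F$ together with the elementary bound $\|\bm{X}\|\le\tfrac1{\sqrt2}\|\bm{X}\|_F$ for skew-symmetric $\bm{X}$ (which follows from $\|\bm{X}\|_F^2 = 2\sum_j\theta_j^2\ge 2\|\bm{X}\|^2$), one gets $\|[\bm{X},\bm{Y}]\|_F\le\|\bm{X}\|+\|\bm{Y}\|\le\sqrt{2}$, hence the sectional curvature is bounded above by $\tfrac12$ everywhere. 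By the Rauch comparison theorem, no geodesic has a conjugate point before arc length $\pi/\sqrt{1/2}=\sqrt{2}\pi$, so $r_{{\rm conj}}\ge\sqrt{2}\pi$. Combining, $r_{{\rm inj}}(\og) = \min\{r_{{\rm conj}},\,\tfrac12\ell\} = \sqrt{2}\pi$.

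\textbf{Convexity radius and the main difficulty.} One always has $r_{{\rm cvx}}\le\tfrac12 r_{{\rm inj}} = \tfrac{\sqrt{2}}{2}\pi$: on the closed geodesic of length $2\sqrt 2\pi$ found above, a geodesic ball of radius slightly larger than $\tfrac{\sqrt2}{2}\pi$ centered on it already fails to be convex. For the reverse inequality I will use the standard lower bound $r_{{\rm cvx}}\ge\min\{\tfrac12 r_{{\rm inj}},\,\tfrac{\pi}{2\sqrt{\kappa}}\}$ with $\kappa = \sup K$; since $\kappa\le\tfrac12$ we have $\tfrac{\pi}{2\sqrt{\kappa}}\ge\tfrac{\pi}{2}\sqrt{2}=\tfrac{\sqrt{2}}{2}\pi=\tfrac12 r_{{\rm inj}}$, so $r_{{\rm cvx}}\ge\tfrac{\sqrt{2}}{2}\pi$, and therefore $r_{{\rm cvx}}(\og)=\tfrac{\sqrt{2}}{2}\pi$. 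The main obstacle is assembling the comparison-geometry inputs with sharp constants: the curvature estimate $K\le\tfrac12$, where the skew-symmetry inequality $\|\bm{X}\|\le\tfrac1{\sqrt2}\|\bm{X}\|_F$ is precisely what improves the naive bound $K\le1$ enough to reach $\sqrt2\pi$; the sharp form $\tfrac{\pi}{2\sqrt\kappa}$ of the convexity-radius lower bound (a weaker constant such as $\tfrac{\pi}{4\sqrt\kappa}$ would miss the target); and the observation that geodesic loops here are closed geodesics, which is what makes the Klingenberg identity directly computable. The remaining ingredients — bi-invariance, homogeneity, and the normal form of skew-symmetric matrices — are routine.
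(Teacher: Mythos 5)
Your proof is correct and follows essentially the same architecture as the paper's: Klingenberg's identity $r_{\rm inj}=\min\{r_{\rm conj},\tfrac12\ell\}$, a sectional-curvature bound $K\le\tfrac12$ feeding into the conjugate radius, the normal-form computation giving $\ell=2\sqrt2\pi$, and the Whitehead-type bound $\tfrac12\min\{r_{\rm inj},\pi/\sqrt{\bar\kappa}\}$ for the convexity radius. Two of your choices differ in a way worth recording. First, you obtain $\|[\bm{X},\bm{Y}]\|_F^2\le 2$ for orthonormal skew-symmetric $\bm{X},\bm{Y}$ from the elementary inequality $\|\bm{X}\|\le\tfrac{1}{\sqrt2}\|\bm{X}\|_F$ (pairing of eigenvalues of skew-symmetric matrices), whereas the paper cites the B\"ottcher--Wenzel commutator inequality; both reach the same constant $\tfrac12$, and your route is self-contained and genuinely uses skew-symmetry. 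Second, for $r_{\rm cvx}$ the paper simply plugs into the formula $\tfrac12\min\{r_{\rm inj},\pi/\sqrt{\bar\kappa}\}$ from the literature as if it were an identity, while you correctly treat it as a lower bound and supply the matching upper bound $r_{\rm cvx}\le\tfrac14\ell$ via the closed geodesic (a geodesic arc of length slightly more than $\ell/2$ inside a ball of radius slightly more than $\ell/4$ is not minimizing, violating the footnote's definition); your version is the more complete argument. You also make explicit two points the paper leaves implicit: the reduction from $\od$ to $\sod$ (the non-identity component is an isometric, geodesically disconnected copy) and the fact that geodesic loops at the identity are automatically smooth closed geodesics, which is what makes the Klingenberg formula directly computable here.
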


\begin{remark}\label{convergence-key-remark}
We discuss the relationship between $\|\bm{E}^\star\|_{\infty}$ (resp. $\|\bm{E}^\star\|_{F}$) and the Euclidean distance $\|\bG\bQ^\star-\bG^\star\|_\infty$ (resp. $\operatorname{d}_F([\bG], [\bG^\star])$). 
Actually from \cite[Proposition 10.22]{boumal2023introduction} we know that it is the difference between Riemannian and Euclidean distance since $\operatorname{d}^{\og}(\bG\bQ^\star, \bG^\star)=\|\bm{E}^\star\|_{F}$ if $\|\bm{E}^\star\|_{\infty}<r_{\rm inj}(\og)=\sqrt{2}\pi$ which is always satisfied under our assumption in this paper. From \eqref{Euc-Rie-dist-infty} and \eqref{Euc-Rie-dist-F} we know that the Euclidean distance is controlled by the Riemannian one. On the other hand, for example when $d=3$ one has for each $i\in[n]$ that
\begin{align*}
\|\bG_i \bQ^\star - \bG_i^\star\|_F
&=\|\exp(\bm{E}_i^\star)-\bm{I}_3\|_F\notag\\
&=\left\|\frac{\sqrt{2}\sin\left(\frac{\sqrt{2}}{2}\|\bm{E}_i^\star\|_F\right)}{\|\bm{E}_i^\star\|_F}\bm{E}_i^\star+\frac{2\left(1-\cos\left(\frac{\sqrt{2}}{2}\|\bm{E}_i^\star\|_F\right)\right)}{\|\bm{E}_i^\star\|_F^2}\bm{E}_i^{\star2}\right\|_F\notag\\
&=\sqrt{2\sin \left(\frac{\sqrt{2}}{2}\|\bm{E}_i^\star\|_F\right)^2+4\left(\cos \left(\frac{\sqrt{2}}{2}\|\bm{E}_i^\star\|_F\right)-1\right)^2\cdot\frac{\|\bm{E}_i^{\star2}\|_F^2}{\|\bm{E}_i^\star\|_F^4}}\notag\\
&\geq\sqrt{2}\sin \left(\frac{\sqrt{2}}{2}\|\bm{E}_i^\star\|_F\right)\notag,
\end{align*}
and consequently  we know that 
$\|\bG\bQ^\star-\bG^\star\|_{\infty}\leq \frac{1}{10}$ implies that $\|\bG\bQ^\star-\bG^\star\|_{\infty}\ge \frac{99}{100}\|\bm{E}^\star\|_{\infty}$. 
Then for simplicity we can use $\|\bG\bQ^\star-\bG^\star\|_\infty$ (resp. $\operatorname{d}_F([\bG], [\bG^\star])$) to replace the assumptions in Proposition \ref{prop: stayintheball} about $\|\bm{E}^\star\|_{\infty}$ (resp. $\|\bm{E}^\star\|_{F}$).
\end{remark}
\begin{remark}\label{transfer-point-remark}
All assumptions in the main results (e.g., Theorem \ref{hess-positive}, Corollary \ref{cor-EB}, Theorem \ref{thm:convergence-rate}) about $\operatorname{d}_F([\bG],[\widehat{\bG}])$ (resp. $\|\bG\widehat{\bQ}-\widehat{\bG}\|_\infty$) can be transferred to $\operatorname{d}_F([\bG], [\bG^\star])$ (resp. $\|\bG\bQ^\star-\bG^\star\|_\infty$) with the help of Proposition \ref{prop-sol-in}. Actually, the triangle inequality and Lemma \ref{dist-GstarGhat} imply that
\begin{align*}
  \operatorname{d}_F([\bG],[\widehat{\bG}]) \leq \operatorname{d}_F([\bG],[\bG^\star]) + \operatorname{d}_F([\widehat{\bG}], [\bG^\star]) \leq \operatorname{d}_F([\bG], [\bG^\star]) + \frac{4\sqrt{d}\|\bm{\Delta}\|}{\sqrt{n}}.
\end{align*}
This together with Lemma \ref{lemma-dist-rotation} indicates that (without loss of generality $\bQ^\star=\bm{I}_d$)
\begin{align*}
\|\bG\widehat{\bQ}-\widehat{\bG}\|_\infty
&=\|\bG\widehat{\bQ}-\bG\widehat{\bQ}^\star+\bG\widehat{\bQ}^\star-\bG^\star \bQ^\star\widehat{\bQ}^\star+\bG^\star \bQ^\star\widehat{\bQ}^\star-\bG^\star\widehat{\bQ}^\star+\bG^\star\widehat{\bQ}^\star-\widehat{\bG}\|_\infty\notag\\
&=\|\widehat{\bQ}-\widehat{\bQ}^\star\|_F+\|\bG\bQ^\star-\bG^\star\|_\infty+\|\bQ^\star-\bm{I}_d\|_F+\|\bG^\star\widehat{\bQ}^\star-\widehat{\bG}\|_\infty \notag\\
&\leq \|\bQ^{\star\top}\widehat{\bQ}\widehat{\bQ}^{\star\top}-\bm{I}_d\|_F+2\|\bQ^\star-\bm{I}_d\|_F+\|\bG\bQ^\star-\bG^\star\|_\infty+\|\bG^\star\widehat{\bQ}^\star-\widehat{\bG}\|_\infty\notag\\
&\leq 4 \varepsilon\cdot \frac{\operatorname{d}_{F}([\bG],[\widehat{\bG}])}{\sqrt{n}}+\|\bG\bQ^\star-\bG^\star\|_\infty+\|\bG^\star\widehat{\bQ}^\star-\widehat{\bG}\|_\infty.
\end{align*}
This make the transfer with same order under our noise level and effective radius setting, e.g., when $\|\bm{\Delta}\|\le \frac{n^{3/4}}{20d^{1/2}}$, $\|\bm{\Delta} \bG^{\star}\|_{\infty}\le \frac{n}{20}$ and
$\rho_F=\frac{1}{10}\min\{\sqrt{n},\frac{n}{\|\bm{\Delta}\|}\}$,
$\rho_{\infty}=\frac{1}{4}$ we have that if
\[
\operatorname{d}_F([\bG], [\bG^\star])=\mathcal{O}(\rho_F),\quad \|\bG\bQ^\star-\bG^\star\|_\infty=\mathcal{O}(\rho_{\infty})
\]
then it follows that
\[
\operatorname{d}_F([\bG],[\widehat{\bG}])
\leq\operatorname{d}_F([\bG], [\bG^\star]) + \min\left\{ n^{1/4}, \frac{n}{\|\bm{\Delta}\|}\right\}=\mathcal{O}(\rho_F),
\]
and
\[
\|\bG\widehat{\bQ}-\widehat{\bG}\|_\infty
\leq \|\bG\bQ^\star-\bG^\star\|_\infty+\mathcal{O}(1)=\mathcal{O}(\rho_{\infty}).
\]
\end{remark}

Now, we focus on the initialization which is also crucial known from Theorem \ref{thm:convergence-rate}. The spectral estimator can provide a initialization satisfies assumptions required in Theorem \ref{thm:convergence-rate} for (quotient) Riemannian gradient method. The following lemma illustrate this result by quantifying the distance between an initial point $\bG^0$ generated by the spectral estimator and the ground truth $\bG^\star$.

\begin{lemma}[Spectral Estimation Error]\label{lemma: spectral}
Suppose that $\|\bm{\Delta}\|\le \frac{n^{3/4}}{20d^{1/2}}$, $\|\bm{\Delta} \bG^{\star}\|_{\infty}\le \frac{n}{40d}$.
Then for the ground truth $\bG^\star\in\ogn$, the spectral estimator $\bG^0=\Pi_{\ogn}(\bm{\Phi}) \in\ogn$ satisfies that 
  \begin{align} 
    \operatorname{d}_F([\bG^0], [\bG^\star]) \leq \frac{8\sqrt{d}  \|\bm{\Delta} \|  }{\sqrt{n}}\quad\text{and}\quad \| \bG^0\bQ_0^\star - \bG^\star \|_\infty\leq\frac{16\left\|\bm{\Delta} \bG^\star\right\|_{\infty}}{n}+\frac{8\sqrt{d}  \|  \bm{\Delta} \|  }{n},
  \end{align}
where $\bQ_0^\star\in\od$ satisfies $\operatorname{d}_F([\bG^0],[\bG^\star])=\Vert \bG^0 \bQ_0^\star-\bG^\star\Vert_F$.
\end{lemma}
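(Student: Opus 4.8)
The plan is to bound everything in terms of the estimation error of the unrounded spectral matrix $\bm{\Phi}$, which Lemma~\ref{dist-GstarGhat} and Proposition~\ref{lemma-distinf-ineq} already control, and then to transport those bounds through the blockwise polar rounding $\bG^0_i=\Pi_{\og}(\bm{\Phi}_i)$.

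First I would record the error of $\bm{\Phi}$ itself. Since $\bm{\Phi}$ maximizes $\operatorname{tr}(\bm{X}^\top\bC\bm{X})$ over $\{\bm{X}:\bm{X}^\top\bm{X}=n\bm{I}_d\}$ and $\bG^\star$ is feasible for this relaxation, $\operatorname{tr}(\bm{\Phi}^\top\bC\bm{\Phi})\ge\operatorname{tr}(\bG^{\star\top}\bC\bG^\star)$, so Lemma~\ref{dist-GstarGhat} applied with $\bG=\bm{\Phi}$ gives $\operatorname{d}_F([\bm{\Phi}],[\bG^\star])\le\frac{4\sqrt d\,\|\bm{\Delta}\|}{\sqrt n}$ and, by \eqref{eq: Gstarhatsingularvalue}, $\sigma_{\min}(\bm{\Phi}^\top\bG^\star)\ge n-\frac{8d\|\bm{\Delta}\|^2}{n}$. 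Since $\|\bm{\Delta}\|\le\frac{n^{3/4}}{20\sqrt d}\le\frac{n}{6\sqrt d}$ and $n\ge2$, Proposition~\ref{lemma-distinf-ineq} with $\bG=\bm{\Phi}$ provides a rotation $\bQ^\star$ with $\operatorname{d}_F([\bm{\Phi}],[\bG^\star])=\|\bm{\Phi}\bQ^\star-\bG^\star\|_F$ and $\|\bm{\Phi}\bQ^\star-\bG^\star\|_\infty\le\frac{8\|\bm{\Delta}\bG^\star\|_\infty}{n}$; I would take $\bQ^\star\in\og$, adjusting the eigenbasis $\bm{\Phi}$ by a fixed element of $\od$ if needed (when $\og=\sod$ this is automatic, as $\operatorname{d}_\infty([\bm{\Phi}],[\bG^\star])<1$ rules out the reversed component).

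Next comes the key geometric step. Because $\bG^0_i$ is a closest element of $\og$ to $\bm{\Phi}_i$ and $\bG^\star_i(\bQ^\star)^\top\in\og$, one gets the blockwise contraction $\|\bG^0_i-\bm{\Phi}_i\|_F\le\|\bG^\star_i(\bQ^\star)^\top-\bm{\Phi}_i\|_F=\|\bm{\Phi}_i\bQ^\star-\bG^\star_i\|_F$ for every $i\in[n]$; summing squares and maximizing over $i$ gives $\|\bG^0-\bm{\Phi}\|_F\le\operatorname{d}_F([\bm{\Phi}],[\bG^\star])$ and $\|\bG^0-\bm{\Phi}\|_\infty\le\|\bm{\Phi}\bQ^\star-\bG^\star\|_\infty$. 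Two triangle inequalities then produce $\operatorname{d}_F([\bG^0],[\bG^\star])\le\|\bG^0\bQ^\star-\bG^\star\|_F\le\|\bG^0-\bm{\Phi}\|_F+\operatorname{d}_F([\bm{\Phi}],[\bG^\star])\le 2\operatorname{d}_F([\bm{\Phi}],[\bG^\star])\le\frac{8\sqrt d\,\|\bm{\Delta}\|}{\sqrt n}$, which is the first claimed bound, and $\|\bG^0\bQ^\star-\bG^\star\|_\infty\le\|\bG^0-\bm{\Phi}\|_\infty+\|\bm{\Phi}\bQ^\star-\bG^\star\|_\infty\le\frac{16\|\bm{\Delta}\bG^\star\|_\infty}{n}$.

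Finally I must replace $\bQ^\star$ by the alignment $\bQ_0^\star$ realizing $\operatorname{d}_F([\bG^0],[\bG^\star])$. Using that each $\bG^0_i$ is orthogonal, $\|\bG^0\bQ_0^\star-\bG^\star\|_\infty\le\|\bQ_0^\star-\bQ^\star\|_F+\|\bG^0\bQ^\star-\bG^\star\|_\infty$, so I need $\|\bQ_0^\star-\bQ^\star\|_F\le\frac{8\sqrt d\,\|\bm{\Delta}\|}{n}$. Now $\bQ^\star$ and $\bQ_0^\star$ are the orthogonal polar factors of $\bm{\Phi}^\top\bG^\star$ and $\bG^{0\top}\bG^\star$; these differ by $(\bG^0-\bm{\Phi})^\top\bG^\star$, whose Frobenius norm is at most $\|\bG^0-\bm{\Phi}\|_F\,\|\bG^\star\|=\sqrt n\,\|\bG^0-\bm{\Phi}\|_F\le4\sqrt d\,\|\bm{\Delta}\|$, while both have smallest singular value at least $n-\frac{32d\|\bm{\Delta}\|^2}{n}\ge n/2$ (for $\bm{\Phi}$ by \eqref{eq: Gstarhatsingularvalue}; for $\bG^0$ because $\operatorname{d}_F([\bG^0],[\bG^\star])^2=2(nd-\|\bG^{\star\top}\bG^0\|_*)\le\frac{64d\|\bm{\Delta}\|^2}{n}$ and each $\sigma_l(\bG^{\star\top}\bG^0)\le n$). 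The standard perturbation bound $\|\bQ_0^\star-\bQ^\star\|_F\le\frac{2\,\|\bG^{0\top}\bG^\star-\bm{\Phi}^\top\bG^\star\|_F}{\sigma_{\min}(\bG^{0\top}\bG^\star)+\sigma_{\min}(\bm{\Phi}^\top\bG^\star)}$ for the orthogonal polar factor then yields the needed estimate, and combining the two displays above completes the proof. The hard part will be precisely this last step: the $\ell_2$ bound of the previous paragraph is by itself a factor $\sqrt n$ too weak to control the change of optimal rotation, so the \emph{linear}-in-perturbation bound for the polar factor is essential, together with the near-$n$ lower bound on $\sigma_{\min}(\bG^{\star\top}\bG^0)$ (itself a consequence of the already-proved $\ell_2$ bound); a secondary, purely bookkeeping, point is ensuring that the rounding $\Pi_{\og}$ does not cross connected components when $\og=\sod$, which the $\ell_\infty$ control on $\bm{\Phi}$ takes care of.
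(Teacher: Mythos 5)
Your proof is correct, and for the $\ell_2$ bound and the first half of the $\ell_\infty$ bound it coincides with the paper's argument: both control the unrounded $\bm{\Phi}$ via Lemma \ref{dist-GstarGhat} and Proposition \ref{lemma-distinf-ineq}, and transport those bounds through the blockwise rounding using the projection contraction $\|\bG^0_i-\bm{\Phi}_i\|_F\le\|\bG^\star_i(\bQ^\star)^\top-\bm{\Phi}_i\|_F$ (the paper packages the $\ell_2$ version as $\|\bG^0-\bG^\star\|_F\le 2\|\bm{\Phi}-\bG^\star\|_F$ via a citation to Liu--Yue--So). The two proofs diverge only in how the change of optimal alignment is controlled. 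The paper first shows $\operatorname{d}_F([\bG^0],[\bm{\Phi}])^2\le n/16$ through a Von Neumann argument giving $\|\bm{\Phi}_i\|_*\ge d-\tfrac{1}{32}$, and then invokes its own Lemma \ref{lemma-dist-rotation} with reference point $\bm{\Phi}$ to obtain $\|\bQ_0^\star-\bm{I}_d\|_F\le \operatorname{d}_F([\bG^0],[\bG^\star])/\sqrt{n}$. You instead identify $\bQ^\star$ and $\bQ_0^\star$ as the orthogonal polar factors of $\bm{\Phi}^\top\bG^\star$ and $\bG^{0\top}\bG^\star$ and apply the classical polar-factor perturbation bound together with the singular-value lower bounds inherited from Lemma \ref{dist-GstarGhat}; both routes deliver $\|\bQ_0^\star-\bQ^\star\|_F\lesssim \sqrt{d}\,\|\bm{\Delta}\|/n$ under the stated noise level, which is exactly what is needed since, as you note, the raw $\ell_2$ comparison of $\bG^0$ and $\bm{\Phi}$ is a factor $\sqrt{n}$ too weak on its own. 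Your route is arguably more direct and skips the Von Neumann step entirely (your contraction already gives the much stronger $\|\bG^0-\bm{\Phi}\|_F\le 4\sqrt{d}\,\|\bm{\Delta}\|/\sqrt{n}$), but it imports an external theorem (R.-C. Li's bound $\|\bm{U}_{\bm{A}}-\bm{U}_{\bm{B}}\|_F\le 2\|\bm{A}-\bm{B}\|_F/(\sigma_{\min}(\bm{A})+\sigma_{\min}(\bm{B}))$ for the orthogonal polar factor) that the paper never states and that you would need to cite and verify applies with the claimed constant; the paper's route stays self-contained by reusing Lemma \ref{lemma-dist-rotation}, which it needs anyway for Proposition \ref{prop: stayintheball}. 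Your side remark about keeping $\bQ^\star$ in the correct connected component when $\og=\sod$ flags a genuine subtlety that the paper also glosses over.
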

\begin{proof}
Recall that $\bm{\Phi}$ is the matrix of top $d$ eigenvectors of $\bC$ with $\bm{\Phi}^\top \bm{\Phi} = n\bm{I}_d$, which satisfies $\operatorname{tr}(\bm{\Phi}^\top \bC \bm{\Phi}) \geq \operatorname{tr} (\bG^{\star\top} \bC \bG^{\star} )$. Without loss of generality, we assume $\operatorname{d}_F([\bm{\Phi}], [\bG^\star]) = \| \bm{\Phi} - \bG^\star \|_F$. Then it follows from \eqref{dist-GstarGhat-ineq} that
  \begin{align*}
    \operatorname{d}_F([\bG^0], [\bG^\star]) \leq \| \bG^0 - \bG^\star \|_F \leq 2 \| \bm{\Phi} - \bG^\star\|_F  \leq \frac{8\sqrt{d}  \|  \bm{\Delta} \|  }{\sqrt{n}},
  \end{align*}
where the second inequality is due to \cite[Lemma 2]{liu2020unified} for $\ogn$. 
Next, since from Von Neumann's inequality $d\cdot\sigma_{\min}(\bC_{i,:}\bm{\Phi})\leq \sum_{l=1}^d \sigma_{l}(\bC_{i,:}\bm{\Phi}) 
\leq \sigma_{\max}(\bC)\cdot\|\bm{\Phi}_i\|_*$ for each $i\in[n]$, we know from \eqref{singular-lowbd} that
\[
\|\bm{\Phi}_i\|_*\ge\frac{d\cdot\sigma_{\min}(\bC_{i,:}\bm{\Phi})}{\sigma_{\max}(\bC)}\ge \frac{d\cdot\left(n-\frac{8d\|\bm{\Delta}\|^2}{n}-\|\bm{\Delta}\|\| \bm{\Phi}-\bG^\star\|_{F}-\|\bm{\Delta}\bG^\star\|_{\infty}\right)}{n+\|\bm{\Delta}\|}\ge d-\frac{1}{32}.
\]
Then $\operatorname{d}_F([\bG^0], [\bm{\Phi}])^2=2nd-2\operatorname{tr}(\bG^{0\top}\bm{\Phi})
=2nd-2\sum_{i=1}^n\|\bm{\Phi}_i\|_*\le \frac{n}{16}$ and this together with Lemma \ref{lemma-dist-rotation} (also $\operatorname{d}_F([\bm{\Phi}], [\bG^\star])\leq \frac{4\sqrt{d}  \|  \bm{\Delta} \|  }{\sqrt{n}}$) implies that with $\varepsilon=\frac{1}{4}$,
\begin{equation}\label{spectral-key}
\| \bQ_0^\star - \bm{I}_d \|_F\leq 4 \varepsilon\cdot \frac{\operatorname{d}_{F}([\bG^0],[\bG^\star])}{\sqrt{n}}.
\end{equation}
Thus, it follows \eqref{distinf-GstarGhat-ineq} and \eqref{spectral-key} that
\begin{align*}
\| \bG^0\bQ_0^\star - \bG^\star \|_\infty
&\leq \| \bG^0 - \bG^\star \|_\infty+\| \bQ_0^\star - \bm{I}_d \|_F \notag\\
&\leq 2\|\bm{\Phi}-\bG^\star\|_{\infty}+\| \bQ_0^\star - \bm{I}_d \|_F\notag\\
&\leq\frac{16\left\|\bm{\Delta} \bG^\star\right\|_{\infty}}{n}+\frac{8\sqrt{d}  \|  \bm{\Delta} \|  }{n}.
\end{align*}
The proof is complete.
\end{proof}

Combining the results in Theorem \ref{thm:convergence-rate}, Proposition \ref{prop: stayintheball}, Lemma \ref{lemma: spectral} and Remark \ref{convergence-key-remark} and \ref{transfer-point-remark} we have the following corollary about the convergence result of (quotient) Riemannnian gradient method with spectral initialization.

\begin{cor}
Suppose that $\|\bm{\Delta}\|\le \frac{n^{3/4}}{80d^{1/2}}$, $\|\bm{\Delta} \bG^{\star}\|_{\infty}\le \frac{n}{400d}$,
  and the sequence $\{ \bG^k\}_{k\geq 0}$ generated by Algorithm \ref{alg: rgd} with spectral initialization and stepsize $0<\underline{t}\leq t_k\leq \frac{1}{4nd}$ for any $k\geq 0$.  
Then the sequence $\{f([\bG^k])\}_{k\ge0}$ (resp. $\{\bG^k\}_{k\geq 0}$) converges Q-linearly (resp. R-linearly) to $f([\widehat{\bG}])$ (resp. some $\bG^*\in[\widehat{\bG}]$) globally.
\end{cor}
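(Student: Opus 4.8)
The plan is to concatenate three results already in hand: the spectral initialization estimate (Lemma~\ref{lemma: spectral}), the ``staying in the ball'' invariant (Proposition~\ref{prop: stayintheball}), and the conditional linear convergence statement (Theorem~\ref{thm:convergence-rate}), whose only nontrivial hypothesis is that the iterates stay in the effective region of the Riemannian local error bound of Corollary~\ref{cor-EB}. The heart of the argument is an induction on $k$ showing that this region is never exited; once that invariant is established, Theorem~\ref{thm:convergence-rate} applies verbatim and yields both the Q-linear decay of $\{f([\bG^k])\}$ and the R-linear convergence of $\{\bG^k\}$.

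First I would handle the base case. Under $\|\bm{\Delta}\|\le n^{3/4}/(80 d^{1/2})$ and $\|\bm{\Delta}\bG^\star\|_\infty\le n/(400 d)$, Lemma~\ref{lemma: spectral} gives $\operatorname{d}_F([\bG^0],[\bG^\star])\le 8\sqrt{d}\|\bm{\Delta}\|/\sqrt{n}$ and $\|\bG^0\bQ_0^\star-\bG^\star\|_\infty\le 16\|\bm{\Delta}\bG^\star\|_\infty/n+8\sqrt{d}\|\bm{\Delta}\|/n$. A short computation shows that under this noise level the right-hand sides are at most $\tfrac{1}{200}\min\{\sqrt n,\,n/\|\bm{\Delta}\|\}$ and $\tfrac1{10}$ respectively, i.e.\ the ``(Distance)'' hypotheses of Proposition~\ref{prop: stayintheball} hold at $\bG^0$ (using Remark~\ref{convergence-key-remark} to pass between the Euclidean displacements and the skew-symmetric representatives $\bm{E}^\star$, which is legitimate because all relevant quantities stay well below $r_{\mathrm{inj}}(\og)=\sqrt2\pi$). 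Since the corollary's noise level also implies the ``(Noise)'' hypotheses of Proposition~\ref{prop: stayintheball}, and $t_k\le 1/(4nd)\le 1/(2n)$ meets the stepsize requirement, Proposition~\ref{prop: stayintheball} applies at $\bG^0$; the same distance bounds to $[\bG^\star]$ then hold at $\bG^1$, and iterating gives $\operatorname{d}_F([\bG^k],[\bG^\star])\le\tfrac{1}{200}\min\{\sqrt n,\,n/\|\bm{\Delta}\|\}$ together with the analogous $\ell_\infty$ bound $\le\tfrac1{10}$ (for the optimal alignment of $\bG^k$ to $\bG^\star$) for every $k\ge0$.

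Next I would convert these ``distance to $[\bG^\star]$'' bounds into ``distance to $[\widehat{\bG}]$'' bounds, which is precisely Remark~\ref{transfer-point-remark}: the triangle inequality with Lemma~\ref{dist-GstarGhat} and Lemma~\ref{lemma-dist-rotation} yields $\operatorname{d}_F([\bG^k],[\widehat{\bG}])\le\operatorname{d}_F([\bG^k],[\bG^\star])+4\sqrt d\|\bm{\Delta}\|/\sqrt n\le\rho_F$ and $\|\bG^k\widehat{\bQ}^k-\widehat{\bG}\|_\infty\le\|\bG^k\bQ^\star-\bG^\star\|_\infty+\mathcal O(1)\le\rho_\infty$, with $\rho_F,\rho_\infty$ as in Theorem~\ref{hess-positive}; this is exactly condition~\eqref{stay-in-ball-requ}. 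I would also check stepsize compatibility with Theorem~\ref{thm:convergence-rate}: since $\|\bm{\Delta}\bG^k\|_\infty\le\|\bm{\Delta}\bG^\star\|_\infty+\|\bm{\Delta}\|\operatorname{d}_F([\bG^k],[\bG^\star])\lesssim n$, the denominator $n(d+1)+\|\bm{\Delta}\|+\sqrt d\|\bm{\Delta}\bG^k\|_\infty$ is $\lesssim nd$, so $\underline t\le t_k\le 1/(4nd)$ sits inside the admissible interval of Theorem~\ref{thm:convergence-rate} for a suitable absolute $\alpha\in(0,1)$. Feeding this into Theorem~\ref{thm:convergence-rate} (whose noise hypotheses are weaker than those imposed here) produces the claimed global Q-linear convergence of $\{f([\bG^k])\}$ to $f([\widehat{\bG}])$ and R-linear convergence of $\{\bG^k\}$ to some $\bG^\ast\in[\widehat{\bG}]$.

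I expect the main obstacle to be the constant bookkeeping in the invariant induction: one must verify that, after the two changes of reference point (Euclidean $\leftrightarrow$ Riemannian via Remark~\ref{convergence-key-remark}, and $[\bG^\star]\leftrightarrow[\widehat{\bG}]$ via Remark~\ref{transfer-point-remark} together with Proposition~\ref{prop-sol-in}), the numerical thresholds $\tfrac1{200},\tfrac1{10},\tfrac14,\rho_F,\rho_\infty$ appearing in Propositions~\ref{prop: stayintheball} and~\ref{prop-sol-in}, Theorem~\ref{hess-positive} and Corollary~\ref{cor-EB} remain mutually compatible under the single noise/stepsize budget stated in the corollary. The only other point needing care is that Proposition~\ref{prop: stayintheball} is proved for $d=3$; for general $d$ only the explicit formula \eqref{exp-d3} for the exponential map on $\operatorname{Skew}(d)$ changes, which modifies the algebra in \eqref{grad-minus-Estar} but not its structure, so the same invariant propagates. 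Everything else is a direct assembly of the cited statements.
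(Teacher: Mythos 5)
Your proposal is correct and follows essentially the same route as the paper's own proof: verify that the spectral initialization (Lemma \ref{lemma: spectral}) places $\bG^0$ in the invariant region, propagate this by induction via Proposition \ref{prop: stayintheball}, transfer the reference point from $[\bG^\star]$ to $[\widehat{\bG}]$ using Remarks \ref{convergence-key-remark} and \ref{transfer-point-remark}, bound $\|\bm{\Delta}\bG^k\|_\infty$ to confirm the stepsize condition \eqref{step-req}, and then invoke Theorem \ref{thm:convergence-rate}. The paper's proof is exactly this assembly (with the same level of looseness in the constant bookkeeping you flag), so no further commentary is needed.
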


\begin{proof}
It suffices to verify the assumptions in Theorem \ref{thm:convergence-rate} to derive the desired results. From Lemma \ref{lemma: spectral}, Proposition \ref{prop: stayintheball} and Remark \ref{convergence-key-remark}, we know that the iterates satisfy \eqref{stay-in-ball-requ}  for all $k\ge0$ by transferring the distance measurement from the point $\bG^\star$ to $\widehat{\bG}$ by Remark  \ref{transfer-point-remark} (i.e., starting and staying in the effective region of (Riemannian) local error bound given in Corollary \ref{cor-EB}). Also, it follows for each $k\ge0$ that
\begin{equation*}
\begin{aligned}
\|\bm{\Delta} \bG^k\|_{\infty}
\leq \| \bm{\Delta} (\bG^k \bQ^{k}-\bG^{\star})\|_{\infty}+\left\|\bm{\Delta} \bG^{\star}\right\|_{\infty} 
&\leq \| \bm{\Delta}\|\cdot \operatorname{d}_F([\bG^k], [\bG^{\star}])+\left\|\bm{\Delta} \bG^{\star}\right\|_{\infty}\\
&\leq \frac{1}{200}\cdot\min\left\{\sqrt{n}\| \bm{\Delta}\|,n\right\}+\frac{n}{400d}\leq n,
\end{aligned}
\end{equation*}
where $\bQ^k\in \mathcal{O}(d)$ satisfies $\operatorname{d}_F([\bG^k],[\widehat{\bG}])=\Vert \bG^k \bQ^k-\widehat{\bG} \Vert_F$.
Then it follows that 
\[
n(d+1)+\|\bm{\Delta}\|+\sqrt{d}\cdot\|\bm{\Delta} \bG^k\|_{\infty}\leq 3nd
\]
and consequently with $\alpha\in(0,\frac{1}{4}]$ we know for each $k\ge0$ that
\begin{equation*}
  t_k \leq\frac{1}{4nd}\leq \frac{1-\alpha}{3nd}\leq \frac{1-\alpha}{n(d+1)+\|\bm{\Delta}\|+\sqrt{d}\cdot\|\bm{\Delta} \bG^k\|_{\infty}},
  \end{equation*}
which illustrates that the stepsize $\{t_k\}_{k\geq 0}$ satisfying \eqref{step-req}. Thus, all assumptions in Theorem \ref{thm:convergence-rate} are satisfied and the proof is complete.
\end{proof}

\section{Conclusion}

In this work, we study the landscape of least-squares formulation of the orthogonal group synchronization problem from the quotient geometric view. Local strongly concave property on the quotient manifold is proved under certain noise level, and as a byproduct we derive the (Riemannian) local error bound property on both the original and quotient manifold. Improved estimation results of the least-squares and spectral estimator are derived with near-optimal noise level for exact recovery. As an algorithmic consequence, the sequential linear convergence
result of (quotient) Riemannian gradient method (with spectral initialization) to the global maximizers is proved. For future directions, it would be interesting to study the convergence properties of second-order method which is quite different from the Riemannian gradient method whose iterative direction is deviated on the original and quotient manifold.


\label{section-con}

\section*{\LARGE Appendix}
\begin{appendices}
\section{Proof of Lemma \ref{lemma-exp-lip}
}
\label{appendix-A}

From basic calculation we know that
\begin{align*}
\frac{\operatorname{d}}{\mathrm{d}t} \exp (\bm{E}+t \bm{\Delta} \bm{E})\bigg|_{t=0} 
&=\lim _{n \rightarrow \infty} \frac{\operatorname{d}}{\mathrm{d} t}\left(\bm{I}_d+\frac{\bm{E}+t \bm{\Delta} \bm{E}}{n}\right)^{n}\bigg|_{t=0}\\ 
&=\lim _{n \rightarrow \infty} \sum_{k=0}^{n}\left(\bm{I}_d+\frac{\bm{E}}{n}\right)^{k} \frac{\bm{\Delta}\bm{E}}{n} \left(\bm{I}_d+\frac{\bm{E}}{n}\right)^{n-k-1}\\
&=\int_{0}^{1} \exp (s \bm{E})\cdot \bm{\Delta} \bm{E}\cdot \exp ((1-s) \bm{E}) \mathrm{d} s.
\end{align*}
Then by the mean value inequality it follows that
\begin{align*}
\|\exp(\bm{E}')-\exp(\bm{E})\|_F
&\leq\sup_{c\in[0,1]}\|\operatorname{D}\exp(\bm{E}+c(\bm{E}'-\bm{E}))\|\|\bm{E}'-\bm{E}\|_F
\leq \|\bm{E}'-\bm{E}\|_F,\notag
\end{align*}
where the last inequality is from the fact that for any $\bm{E}\in\operatorname{Skew}(d)$
\begin{align*}
\|\operatorname{D}\exp(\bm{E})\|=\sup_{\|\bm{\Delta} \bm{E}\|_F=1}\|\operatorname{D}\exp(\bm{E})\bm{\Delta} \bm{E}\|_F
&=\sup_{\|\bm{\Delta} \bm{E}\|_F=1}\left\|\frac{\operatorname{d}}{\mathrm{d} t} \exp (\bm{E}+t \bm{\Delta} \bm{E})\bigg|_{t=0}\right\|_F\notag\\
&=\sup_{\|\bm{\Delta} \bm{E}\|_F=1}\left\|\int_{0}^{1} \exp (s \bm{E})\cdot \bm{\Delta} \bm{E}\cdot \exp ((1-s) \bm{E})\mathrm{d} s\right\|_F\notag\\
&\leq \sup_{\|\bm{\Delta} \bm{E}\|_F=1}\|\bm{\Delta} \bm{E}\|_F\cdot \int_{0}^{1} 1\mathrm{d} s=1.
\end{align*}
Thus, the inequality \eqref{retr-key-ineq-1} is directly from 
\begin{align}
\left\|\operatorname{Exp}_{\bG}(\bm{\xi})-\bG\right\|_{F}
&=\left\|[\bG_1\exp(\bm{E}_1)-\bG_1;\ldots; \bG_n\exp(\bm{E}_n)-\bG_n]\right\|_F \notag\\
&=\sqrt{\sum_{i=1}^n\|\exp(\bm{E}_i)-\bm{I}_d\|_F^2}\notag\\
&\leq \sqrt{\sum_{i=1}^n\|\bm{E}_i\|_F^2}=\|\bm{\xi}\|_F.\notag
\end{align}
For \eqref{retr-key-ineq-2}, we know that
\begin{align}\label{lip-key-0}
&\|\operatorname{D} \exp(\bm{E}')-\operatorname{D} \exp(\bm{E})\|\notag\\
= &\ \sup_{\|\bm{\Delta} \bm{E}\|_F=1}\|\operatorname{D} \exp(\bm{E}')\bm{\Delta} \bm{E}-\operatorname{D} \exp(\bm{E})\bm{\Delta} \bm{E}\|_F\notag\\
= &\ \sup_{\|\bm{\Delta} \bm{E}\|_F=1}\left\|\left(\int_{0}^{1} \exp (s \bm{E}')\cdot \bm{\Delta} \bm{E}\cdot \exp ((1-s) \bm{E}')\mathrm{d} s\right)-\left(\int_{0}^{1} \exp (s \bm{E})\cdot \bm{\Delta} \bm{E}\cdot \exp ((1-s) \bm{E})\mathrm{d} s\right) \right\|_F\notag\\
= &\ \sup_{\|\bm{\Delta} \bm{E}\|_F=1}\bigg\{\left\|\left(\int_{0}^{1} \exp (s \bm{E}')\cdot \bm{\Delta} \bm{E}\cdot \exp ((1-s) \bm{E}')\mathrm{d} s\right)-\left(\int_{0}^{1} \exp (s \bm{E}')\cdot \bm{\Delta} \bm{E}\cdot \exp ((1-s) \bm{E})\mathrm{d} s\right) \right\|_F\notag\\
&+\left\|\left(\int_{0}^{1} \exp (s \bm{E}')\cdot \bm{\Delta} \bm{E}\cdot \exp ((1-s) \bm{E})\mathrm{d} s\right)-\left(\int_{0}^{1} \exp (s \bm{E})\cdot \bm{\Delta} \bm{E}\cdot \exp ((1-s) \bm{E})\mathrm{d} s\right) \right\|_F\bigg\}\notag\\
\leq &\ \sup_{\|\bm{\Delta} \bm{E}\|_F=1}\left\{\|\bm{\Delta} \bm{E}\|_F \int_0^1  \|\exp ((1-s) \bm{E}')-\exp ((1-s) \bm{E})\|_F+\|\exp (s \bm{E}')-\exp (s \bm{E})\|_F\mathrm{d}s \right\}\notag\\
\leq &\ \|\bm{E}'-\bm{E}\|_F \int_0^1 1 \mathrm{d}s=\|\bm{E}'-\bm{E}\|_F.
\end{align}
Note further that
\[
\exp(\bm{E}')-\exp(\bm{E})-\operatorname{D}\exp(\bm{E})(\bm{E}'-\bm{E})
 =\int_0^1\operatorname{D}  \exp(\bm{E}+s(\bm{E}'-\bm{E}))(\bm{E}'-\bm{E})\mathrm{d}s-\operatorname{D}  \exp(\bm{E})(\bm{E}'-\bm{E}).
 \]
This, together with \eqref{lip-key-0} implies that 
\begin{align*}
&\|\exp(\bm{E}')-\exp(\bm{E})-\operatorname{D}\exp(\bm{E})(\bm{E}'-\bm{E})\|_F\notag\\
\le &\ \int_0^1\|(\operatorname{D}  \exp(\bm{E}+s(\bm{E}'-\bm{E}))-\operatorname{D}  \exp(\bm{E}))(\bm{E}'-\bm{E})\|_F \mathrm{d}s\notag\\
\le &\ \int_0^1\|\operatorname{D}  \exp(\bm{E}+s(\bm{E}'-\bm{E}))-\operatorname{D}  \exp(\bm{E})\| \mathrm{d}s\cdot \|\bm{E}'-\bm{E}\|_F\notag\\
\le &\  \int_0^1 s\|\bm{E}'-\bm{E}\|_F\mathrm{d}s\cdot\|\bm{E}'-\bm{E}\|_F=\frac{1}{2}\|\bm{E}'-\bm{E}\|_F^2.
\end{align*}
Consequently, one has that
\begin{align}
\left\|\operatorname{Exp}_{\bG}(\bm{\xi})-(\bG+\bm{\xi})\right\|_{F}
&=\left\|[\bG_1\exp(\bm{E}_1)-\bG_1-\bG_1\bm{E}_1;\ldots; \bG_n\exp(\bm{E}_n)-\bG_n-\bG_n\bm{E}_n]\right\|_F \notag\\
&=\sqrt{\sum_{i=1}^n\|\exp(\bm{E}_i)-\bm{I}_d-\bm{E}_i\|_F^2}\notag\\
&\leq \sqrt{\sum_{i=1}^n\frac{1}{4}\|\bm{E}_i\|_F^4}\leq\frac{1}{2}\|\bm{\xi}\|_F^2.\notag
\end{align}
The proof is complete.

\section{Proof of Lemma \ref{lemma-dist-rotation}}
\label{appendix-B}
Without loss of generality, we assume that $\operatorname{d}_{F}([\bm{H}_1], [\bG]) \leq \operatorname{d}_{F}([\bm{H}_2], [\bG])$. We decompose $\bm{H}_1, \bm{H}_2$ into two parts 
\[\bm{H}_1=a \bG+\sqrt{n} \bm{W}\quad \text{and} \quad \bm{H}_2=b \bG+\sqrt{n} \bm{V},\] 
where $\langle \bG, \bm{W}\rangle=\langle \bG, \bm{V}\rangle=0$ and $a, b$ are nonnegative real numbers (which is guaranteed by the given assumptions). 
By the definition of $\bQ$, we have $\left\|\bm{H}_1 \bQ-\bm{H}_2\right\|_{F}^{2} \leq\|\bm{H}_1-\bm{H}_2\|_{F}^{2}$, and from the decomposition of $\bm{H}_1, \bm{H}_2$ we deduce that $n\|a\bQ-b\bm{I}_d\|_F^2+n\|\bm{W}\bQ-\bm{V}\|_F^2\leq n\|(a-b)\bm{I}_d\|_F^2+n\|\bm{W}-\bm{V}\|_F^2$, i.e.,
\begin{align}\label{GHlemma-key1}
\|a\bQ-b\bm{I}_d\|_F^2-\|(a-b)\bm{I}_d\|_F^2
&\leq \|\bm{W}-\bm{V}\|_F^2-\|\bm{W}\bQ-\bm{V}\|_F^2 \notag\\
&=\left(\|\bm{W}-\bm{V}\|_{F}-\left\|\bm{W}\bQ-\bm{V}\right\|_{F}\right)\left(\|\bm{W}-\bm{V}\|_{F}+\left\|\bm{W}\bQ-\bm{V}\right\|_{F}\right) \notag\\
& \leq\|\bm{I}_d-\bQ\|_F\|\bm{W}\|_{F}\left(\|\bm{W}-\bm{V}\|_{F}+\left\|\bm{W}\bQ-\bm{V}\right\|_{F}\right) \notag\\
& \leq\|\bm{I}_d-\bQ\|_F\|\bm{W}\|_{F}\left(\|\bm{I}_d-\bQ\|_F\|\bm{W}\|_{F}+2\left\|\bm{W}\bQ-\bm{V}\right\|_{F}\right),
\end{align}
where the last two inequalities are from the triangle inequality. Since $\|a\bQ-b\bm{I}_d\|_F^2-\|(a-b)\bm{I}_d\|_F^2=\|\bm{I}_d-\bQ\|_F^2 ab$, then \eqref{GHlemma-key1} reduces to
\begin{equation}\label{GHlemma-key2}
\|\bm{I}_d-\bQ\|_F ab \leq\|\bm{W}\|_{F}\left(\|\bm{I}_d-\bQ\|_F\|\bm{W}\|_{F}+2\left\|\bm{W}\bQ-\bm{V}\right\|_{F}\right).
\end{equation}
Since $\operatorname{d}_{F}([\bm{H}_1], [\bG])^{2}=\|\bm{H}_1-\bG\|_{F}^{2}=n\|(1-a)\bm{I}_d\|_F^{2}+n\|\bm{W}\|_{F}^{2}=nd|1-a|+n\|\bm{W}\|_{F}^{2}$, we have $\|\bm{W}\|_{F} \leq \frac{\operatorname{d}_{F}([\bm{H}_1], [\bG])}{\sqrt{n}}$. This combined with \eqref{GHlemma-key2} yields
\begin{equation}\label{rotation-angle-key-1}
\|\bQ-\bm{I}_d\|_F \left(\frac{a b\sqrt{n}}{\operatorname{d}_{F}([\bm{H}_1], [\bG])}-\frac{\operatorname{d}_{F}([\bm{H}_1], [\bG])}{\sqrt{n}}\right) \leq 2\left\|\bm{W}\bQ-\bm{V}\right\|_{F}.
\end{equation}
From $\operatorname{d}_{F}([\bm{H}_1], [\bG])^{2}=2(nd-\|\bm{H}_1^\top \bG\|_*)=2(nd-na\|\bm{I}_d\|_*)=2 nd(1-a)$ and $\operatorname{d}_{F}([\bm{H}_2], [\bG])^{2}=2 nd(1-b)$, we know that
\begin{equation*}
a b =\left(1-\frac{\operatorname{d}_{F}([\bm{H}_1], [\bG])^{2}}{2nd}\right)\left(1-\frac{\operatorname{d}_{F}([\bm{H}_2], [\bG])^2}{2nd}\right) \geq 1-\frac{\operatorname{d}_{F}([\bm{H}_1], [\bG])^{2}+\operatorname{d}_{F}([\bm{H}_2], [\bG])^{2}}{2nd}.
\end{equation*}
Then we have
\begin{align}\label{rotation-angle-key-2}
&\frac{a b\sqrt{n}}{\operatorname{d}_{F}([\bm{H}_1], [\bG])}-\frac{\operatorname{d}_{F}([\bm{H}_1], [\bG])}{\sqrt{n}}\notag\\
\ge&\ \frac{\sqrt{n}}{\operatorname{d}_{F}([\bm{H}_1], [\bG])}-\frac{\operatorname{d}_{F}([\bm{H}_1], [\bG])^2+\operatorname{d}_{F}([\bm{H}_2], [\bG])^2}{2\sqrt{n}d\cdot\operatorname{d}_{F}([\bm{H}_1], [\bG])}-\frac{\operatorname{d}_{F}([\bm{H}_1], [\bG])}{\sqrt{n}}\notag\\
=&\ \frac{\sqrt{n}}{\operatorname{d}_{F}([\bm{H}_1], [\bG])}\left(1-\frac{\operatorname{d}_{F}([\bm{H}_1], [\bG])^{2}+\operatorname{d}_{F}([\bm{H}_2], [\bG])^{2}}{2nd}-\frac{\operatorname{d}_{F}([\bm{H}_1], [\bG])^2}{n}\right)\notag\\
=&\ \frac{\sqrt{n}}{\operatorname{d}_{F}([\bm{H}_1], [\bG])}\left(1-\frac{(2d+1)\operatorname{d}_{F}([\bm{H}_1], [\bG])^{2}+\operatorname{d}_{F}([\bm{H}_2], [\bG])^{2}}{2nd}\right)\notag\\
\ge&\ \frac{\sqrt{n}}{\operatorname{d}_{F}([\bm{H}_1], [\bG])}\left(1-\frac{2\operatorname{d}_{F}([\bm{H}_2], [\bG])^{2}}{n}\right)
\end{align}
where the last inequality is from $\operatorname{d}_{F}([\bm{H}_1], [\bG]) \leq \operatorname{d}_{F}([\bm{H}_2], [\bG])$ and the fact that $d+1\leq 2d$. Thus, from \eqref{rotation-angle-key-1} and \eqref{rotation-angle-key-2} we obtain
\begin{align*}
\|\bm{I}_d-\bQ\|_F  
&\leq \frac{ 2\sqrt{n}\min\{\operatorname{d}_{F}([\bm{H}_1], [\bG]),\operatorname{d}_{F}([\bm{H}_2], [\bG])\}}{n-2\max\{\operatorname{d}_{F}([\bm{H}_1], [\bG])^2,\operatorname{d}_{F}([\bm{H}_2], [\bG])^2\}} \cdot \left\|\bm{W}\bQ-\bm{V}\right\|_{F} \notag\\
&\leq \frac{ 2\min\{\operatorname{d}_{F}([\bm{H}_1], [\bG]),\operatorname{d}_{F}([\bm{H}_2], [\bG])\}}{n-2\max\{\operatorname{d}_{F}([\bm{H}_1], [\bG])^2,\operatorname{d}_{F}([\bm{H}_2], [\bG])^2\}} \cdot \operatorname{d}_{F}([\bm{H}_1], [\bm{H}_2])\notag\\
&\leq \frac{4\min\{\varepsilon_1,\varepsilon_2\}}{\sqrt{n}}\cdot \operatorname{d}_{F}([\bm{H}_1], [\bm{H}_2]).\notag
\end{align*}
The proof is complete.

\section{Proof of Lemma \ref{lemma-inj-cvx-radius}}
\label{appendix-C}
We present the proof on $\sod$ for example. From \cite[Corollary 5.7]{cheeger2008comparison} we know that 
\begin{equation}\label{inj-sod}
r_{\operatorname{inj}}(\sod)=\min \left\{\operatorname{conj}(\sod), \frac{l}{2}\right\}.
\end{equation}
Here, $l$ is the length of the shortest periodic (or closed) geodesic and $\operatorname{conj}(\sod)$ is the conjugate radius of $\sod$, which satisfies
\[
\operatorname{conj}(\sod) \geq \frac{\pi}{\sqrt{\bar{\kappa}}},
\]
where $\bar{\kappa}$ is an upper bound on the sectional curvature of $\sod$. For  orthonormal matrices $\bm{X}$, $\bm{Y}$, the sectional curvature (cf.\ \cite[Corollary 3.19]{cheeger2008comparison}) satisfies 
\[
K(\bm{X}, \bm{Y})=\frac{1}{4}\|[\bm{X}, \bm{Y}]\|_{F}^{2}\leq \frac{1}{2}\|\bm{X}\|_F^2\|\bm{Y}\|_F^2\leq \frac{1}{2},
\]
where $[\cdot, \cdot]$ is the Lie bracket and the first inequality is from \cite[Theorem 1]{audenaert2010variance}. 

On the other hand, for the shortest periodic geodesic in $\sod$ (without loss of generality it starts and ends at $\bm{I}_d$), we need to find $\bm{E} \in \operatorname{Skew}(d)$ and $\bm{E}\neq\bm{0}$ such that $\exp(\bm{E})=\bm{I}_d$. Consider the Schur decomposition of the skew-symmetric matrix $\bm{E}=\bm{U} \bm{\Lambda} \bm{U}^{\top}$, where
\begin{equation}
\bm{\Lambda}=\left[\begin{array}{ccccc}
0 & -\theta_1 & & & \\
\theta_1 & 0 & & & \\
& &\ddots  &  & \\
& &  & 0 & -\theta_{\lfloor d/2\rfloor}\\
& &  & \theta_{\lfloor d/2\rfloor} & 0
\end{array}\right].
\end{equation}
Then the constraint $\exp(\bm{\Lambda})=\bm{I}_d$ implies that $\theta_{i}=2 \pi k_{i} $, $k_{i} \in \mathbb{Z}$, which implies the shortest length of non-trivial minimizing geodesic in $\sod$ is $2\sqrt{2} \pi$.
Hence, from \eqref{inj-sod} we know that $r_{\operatorname{inj}}(\sod)=\sqrt{2}\pi$, 
and consequently it follows from \cite[Definition 2.3]{afsari2013convergence} that
\[
r_{\operatorname{cvx}}(\sod)=\frac{1}{2} \min \left\{r_{\operatorname{inj}}(\sod), \frac{\pi}{\sqrt{\bar{\kappa}}}\right\}=\frac{\sqrt{2}}{2}\pi.
\]
The proof is complete.

\end{appendices}

\nocite{}
\bibliography{references}
\bibliographystyle{alpha}
\end{document}